\newtheorem{theorem}{Theorem}[section]
\newtheorem{proposition}[theorem]{Proposition}
\newtheorem{remark}[theorem]{Remark}
\newtheorem{lemma}[theorem]{Lemma}
\newtheorem{definition}[theorem]{Definition}
\numberwithin{equation}{section}
\newcommand{\be}{\begin{equation}}
\newcommand{\ee}{\end{equation}}
\newcommand{\ba}{\begin{eqnarray}}
\newcommand{\ea}{\end{eqnarray}}
\newcommand{\beq}{\begin{equation}}
\newcommand{\eeq}{\end{equation}}
\numberwithin{equation}{section}
\def\Omc{\mathbb{R}^N\setminus\Omega}
\def\Omb{\mathbb{R}^N\setminus\overline{\Omega}}
\def\RR{{\mathbb{R}}}
\def\NN{{\mathbb{N}}}
\def\CC{{\mathbb{C}}}
\def\Om{\Omega}
\def\bOm{\overline{\Om}}
\def\pOm{\partial \Omega}
\newcommand{\D}{\displaystyle}
\keywords{Fractional Laplace operator, Moore--Gibson--Thompson equation, exterior control problem, exact and null controllabilities, approximate controllability}
\subjclass[2010]{35R11, 35S05, 35S11, 35L35, 93B05.}
\begin{document}

\title[Controllability properties of the nonlocal MGT equation]{Exterior controllability properties of a nonlocal Moore--Gibson--Thompson equation}

\author{Carlos Lizama}
\address{C. Lizama, Universidad de Santiago de Chile, Facultad de Ciencia, Departamento de
Matem\'atica y Ciencia de la Computaci\'on, Casilla 307-Correo 2,
Santiago, Chile.}
 \email{carlos.lizama@usach.cl}

\author{Mahamadi Warma}
\address{M. Warma, University of Puerto Rico, Rio Piedras Campus, Department of Mathematics,
 Faculty of Natural Sciences,  17 University AVE. STE 1701  San Juan PR 00925-2537 (USA)}
\email{mahamadi.warma1@upr.edu, mjwarma@gmail.com }

\author{Sebasti\'an Zamorano}
\address{S. Zamorano, Universidad de Santiago de Chile, Facultad de Ciencia, Departamento de
Matem\'atica y Ciencia de la Computaci\'on, Casilla 307-Correo 2,
Santiago, Chile.}
 \email{sebastian.zamorano@usach.cl}

\thanks{C. Lizama is partially supported by Fondecyt Grant NO. 1180041. The work of M. Warma is partially supported by the Air Force Office of Scientific Research (AFOSR) under Award NO:  FA9550-18-1-0242. S. Zamorano is supported by the Fondecyt Postdoctoral Grant NO. 3180322.}

\begin{abstract}
The three concepts of \emph{exact, null and approximate} controllabilities  are analyzed  from the exterior of the Moore--Gibson--Thompson equation asso\-cia\-ted with the fractional Laplace operator subject to the nonhomogeneous Dirichlet type exterior condition.  Assuming that $b>0$ and $\alpha-\frac{c^2}{b}>0$, we show that if $0<s<1$ and $\Omega\subset\RR^N$ ($N\ge 1$) is a bounded Lipschitz domain, then there is no control function $g$ such that the  following system 
\begin{align*}
\begin{cases}
u_{ttt} + \alpha u_{tt}+c^2(-\Delta)^{s} u + b(-\Delta)^{s} u_{t}=0 & \mbox{ in }\; \Omega\times(0,T),\\
u=g\chi_{\mathcal O\times (0,T)} &\mbox{ in }\; (\Omc)\times (0,T) ,\\
u(\cdot,0) = u_0, u_t(\cdot,0) = u_1, u_{tt}(\cdot,0)=u_2  &\mbox{ in }\; \Omega,
\end{cases}
\end{align*}
is exact or null controllable at time $T>0$. However, we prove that for every  $0<s<1$, the system is indeed
approximately controllable for any $T>0$ and $g\in \mathcal D(\mathcal O\times(0,T))$, where $\mathcal O\subset\Omc$ is an arbitrary non-empty open set.  
\end{abstract}

\date{}
\maketitle

\section{Introduction}\label{intro}

In the present work we investigate  the following third order nonlocal partial differential equation

\begin{align}\label{SD-WE}
\begin{cases}
u_{ttt} + \alpha u_{tt}+c^2(-\Delta)^{s} u + b(-\Delta)^{s} u_{t}=0 & \mbox{ in }\; \Omega\times(0,T),\\
u=g\chi_{\mathcal O\times (0,T)} &\mbox{ in }\; (\Omc)\times (0,T) ,\\
u(\cdot,0) = u_0, u_t(\cdot,0) = u_1, u_{tt}(\cdot,0)=u_2  &\mbox{ in }\; \Omega,
\end{cases}
\end{align}
that we call a linearized {\it nonlocal} version of the so called Moore-Gibson-Thompson (MGT) equation  \cite{Las,LW,LW2}. In \eqref{SD-WE}, $\Omega\subset\RR^N$ is a bounded open set with a Lipschitz continuous boundary $\pOm$, $\alpha$, $b$, $c$ are real numbers, $(-\Delta)^s$ ($0<s<1$) is the fractional Laplace operator (see \eqref{fl_def}), $u=u(x,t)$ is the state to be control and $g=g(x,t)$ is the control function which is localized in an open set $\mathcal O\subset \Omc$. 

Despite the wide range of applications of the local MGT equation such as the medical and industrial use of high intensity ultrasound in lithotripsy, thermotherapy, ultra-sound cleaning, etc., there have been quite a few works about their controllability properties \cite{LiZa2018}. 

For the notion controllability of PDEs from the exterior of the domain where the PDE is solved, the nonlocal case seems to be more suitable to handle because, on the one hand, the associated stationary (time independent) system is ill-posed (see e.g. \cite{War-ACE}) if the control function $g$ is prescribed at the boundary $\pOm$ and, on the other hand, it has been very recently  shown by Warma \cite{War-ACE} that for nonlocal PDEs associated with the fractional Laplacian, the exterior control  is the right notion that replaces the classical boundary control problems (that is, when the control function is localized on a subset $\omega$ of the boundary $\pOm$) associated with local operators such as the Laplace operator or general second order elliptic operators.

On the other hand, the MGT equation (which is originally a nonlinear equation) arises from modeling high amplitude sound waves. The classical nonlinear acoustics models include Kuznetsov's equation, the Westervelt equ\-ation and the Kokhlov - Zabolotskaya - Kuznetsov equation. A thorough study of the linearized models is a good starting point for better understanding the well-posedness and asymptotic behaviors of the nonlinear models. We refer to  \cite{CLD,DLP,KL,KLM,KLP,Las,LW,marchand2012abstract} and the references therein for the derivation of the local version of the MGT equation and the physical meaning of the parameters $\alpha$, $c$ and $b$.  A complete analysis concerning well-posedness, regularity, stability and asymptotic behavior of solutions has been established in the above mentioned references. However, due to the nature of the applications, it is desirable to know how the dynamics of the model changes by means of external controls or forces.


We shall show that if $b>0$ and $(u_0,u_1,u_2)$ belongs to a suitable Banach space, then for every function $g\in L^2((0,T); W^{s,2}(\Omc))$, the system \eqref{SD-WE} has a unique weak solution $(u,u_t,u_{tt})$ satisfying the regularity $u\in C([0,T];L^2(\Omega))\cap C^2([0,T];W^{-s,2}(\bOm))$.  
 In such case, the set of reachable states can be defined as follows:

\begin{align*}
\mathcal R((u_0,u_1,u_2),T):=\Big\{(u(\cdot,T),u_t(\cdot,T),u_{tt}(\cdot,T)):\; g\in L^2((0,T); W^{s,2}(\Omc))\Big\}.
\end{align*}
The classical three notions of controllability for this system can then be defined as follows.

\begin{itemize}
\item  We shall say that the system \eqref{SD-WE} is null controllable at time $T>0$ if
\begin{align*}
(0,0,0)\in \mathcal R((u_0,u_1,u_2),T).
\end{align*}
\item The system will be said to be exact controllable at $T>0$ if
\begin{align*}
\mathcal R((u_0,u_1,u_2),T)=L^2(\Omega)\times W^{-s,2}(\bOm)\times W^{-s,2}(\bOm).
\end{align*}
\item Finally we will say that the system is approximately controllable at $T>0$ if
\begin{align*}
\mathcal R((u_0,u_1,u_2),T)  \mbox{  is dense in }L^2(\Omega)\times W^{-s,2}(\bOm)\times W^{-s,2}(\bOm).
\end{align*}
\end{itemize}

From the above definitions, it is easy to see that null or exact controllability implies approximate controllability.
We refer to Section \ref{preli} for the definition of the spaces involved.

Our first main result sates that if $b>0$ and $\alpha-\frac{c^2}{b}>0$, then the system \eqref{SD-WE} is not exact or null controllable at time $T>0$. As a substitute, we obtain that the system is indeed approximately controllable at any $T>0$ and for every $g\in\mathcal D(\mathcal O\times (0,T))$ where $\mathcal O$ is an arbitrary non-empty open subset of $\Omc$.  This is the best possible result that can be obtained regarding the controllability of the system \eqref{SD-WE}. 

We remark that in our study of controllability, we shall assume that $b>0$ because if $b=0,$ then the system \eqref{Ev-Sy}  is ill-posed \cite{KLM}  and, if $b<0,$ then we lost the good behavior of the eigenvalues of the fractional Laplacian operator.

As far as we know, the present work is the first one that analyses the controllability properties for the nonlocal MGT equation using an exterior control function $g$. We can mention that our recently paper \cite{LiZa2018} is the first work dealing with the interior controllability issues for the local MGT equation using the concept of moving control, but associated with the Laplace operator.

When $g=0$, letting $(-\Delta)_D^s$ be the realization of $(-\Delta)^s$ in $L^2(\Omega)$ with the zero exterior condition $u=0$ in $\Omc$, then the associated system can be written as the following evolution system:
\begin{align}\label{Ev-Sy}
\begin{cases}
u_{ttt} + \alpha u_{tt}+c^2(-\Delta)_D^{s} u + b(-\Delta)_D^{s} u_{t}=0 & \mbox{ in }\; \Omega\times(0,T),\\
\\
u(\cdot,0) = u_0, u_t(\cdot,0) = u_1, u_{tt}(\cdot,0)=u_2  &\mbox{ in }\; \Omega.
\end{cases}
\end{align}
The well-posedness of  an abstract version of \eqref{Ev-Sy} with $(-\Delta)_D^s$ replaced with a generic self-adjoint operator $A$ with domain $D(A)$ dense in a Hilbert space $H$ has been completely examined in \cite{CLD,DLP,KL,KLM,KLP,Las,LW,LW2,marchand2012abstract} and their references by using semigroup methods. Some nonlinear models and some versions including memory terms have been also intensively studied by Lasiecka and Wang \cite{Las,LW,LW2}  where they have obtained some fundamental and beautiful results.

We notice that $(-\Delta)_D^s$ is a self-adjoint operator in $L^2(\Omega)$ with dense domain and has a compact resolvent (see Section \ref{preli}), hence it enters in the framework of the above mentioned references. However, in \eqref{SD-WE} we have a non zero exterior condition which did not satisfy the conditions contained in the above references. For this reason, in the present article we shall also include new results of existence and regularity of solutions to our nonhomogeneous system.

 
In contrast,  we observe that with the non zero exterior condition $u=g$ in $\Omc,$ the associated operator $(-\Delta)^s$ is not longer a generator of a $C_0$-semigroup and hence semigroup methods  cannot be used directly to prove the well-posedness of the system \eqref{SD-WE}.  This makes the study of \eqref{SD-WE} harder than the zero exterior condition. To overcome this difficulty, we shall exploit a new technique which has been developed by Warma in \cite{CLR-MW,War-ACE,WZ} to solve fractional diffusion equations, fractional super diffusive equation and  strong damping wave equations. This original method shall allow us not only to prove well-posedness but also to have an explicit representation of solutions in terms of series which is crucial for the analysis of the controllability of the system.

To summarize, the main novelties of the present paper can be formulated as follows.

\begin{enumerate}
\item[(1)] For the first time, a {\it nonlocal} version of the MGT equation associated with the fractional Laplace operator with non-zero exterior condition has been studied.
 Some well-posedness results and an explicit representation of solutions in terms of series of the nonhomogeneous exterior value nonlocal evolution system \eqref{SD-WE} have been established.

\item[(2)] We have shown that the system is not null or exact controllable at time $T>0$.

\item[(3)] The unique continuation property of solutions to the adjoint system asso\-cia\-ted with \eqref{SD-WE} has been established. This result is obtained by carefully exploiting the unique continuation property for the eigenvalues problem of $(-\Delta)_D^s$ recently obtained in \cite{War-ACE} and by using some powerful tools from PDEs and complex analysis.

\item[(4)] The final important result is the approximate controllability of the system which is a direct consequence of the unique continuation property of the dual system.
\end{enumerate}


 Fractional order operators have emerged as a modeling alternative in various branches of science. For instance,
a number of stochastic models for explaining anomalous diffusion have been
introduced in the literature; among them we  quote the fractional Brownian motion; the continuous time random walk;  the L\'evy flights; the Schneider grey Brownian motion; and more generally, random walk models based on evolution equations of single and distributed fractional order in  space (see e.g. \cite{DS,GR,Man,Sch,ZL}).  In general, a fractional diffusion operator corresponds to a diverging jump length variance in the random walk. We refer to \cite{NPV,Val} and the references therein for a complete analysis, the derivation and the applications of the fractional Laplace operator.
For further details we also refer to \cite{GW-F,GW-CPDE} and their references.

The remaining of the paper is structured as follows. In Section \ref{sec-2} we state the main results of the article. The first one (Theorem \ref{lact-nul-cont}) says that if $b>0$ and $\alpha-\frac{c^2}{b}>0$, then the system \eqref{SD-WE} is not exact or null controllable at time $T>0$.  Our second main result (Theorem \ref{pro-uni-con}) is the unique continuation property for the adjoint system associated with \eqref{SD-WE}.
 The third main result (Theorem \ref{main-Theo}) states that the system is approximately controllable at any $T>0$ and for every $g\in\mathcal D(\mathcal O\times(0,T))$. This last result will be obtained as a direct consequence of the above mentioned unique continuation property. In Section \ref{preli} we introduce the function spaces needed throughout the paper, give a rigorous definition of the fractional Laplacian and some known results that will be used in the proofs of our main results. The proofs of the well-posedness and an explicit representation of solutions to the system \eqref{SD-WE} and the associated dual system are contained in Section \ref{sec-4}. Finally, in Section \ref{prof-ma-re} we give the proofs of our main results on controllability. 

\section{Main results}\label{sec-2}

In this section we state the main results of the article. Throughout the remainder of the paper, without any mention, $\alpha$, $b$, $c$ and $0<s<1$ are real numbers and $\Omega\subset\RR^N$ denotes a bounded open set with a Lipschitz continuous boundary. Given a measurable set $E\subset\RR^N$, we shall denote by $(\cdot,\cdot)_{L^2(E)}$ the scalar product in $L^2(E)$.  We refer to Section \ref{preli} for a rigorous definition of the function spaces and operators involved. Let $W_0^{s,2}(\bOm)$ be the energy space and denote by $W^{-s,2}(\bOm)$ its dual. We shall denote by  $\langle\cdot,\cdot\rangle_{-\frac 12,\frac 12}$ the duality pair between $W^{-s,2}(\bOm)$ and $W_0^{s,2}(\bOm)$ (see Section \ref{preli}).



Next, we introduce our notion of solution.
Let  $(u_0,u_1,u_2)\in L^2(\Omega)\times W^{-s,2}(\bOm)\times W^{-s,2}(\bOm)$ and consider the following two systems:
\begin{equation}\label{main-EQ-2}
\begin{cases}
v_{ttt}+\alpha v_{tt}+c^2(-\Delta)^sv + b(-\Delta)^sv_t=0\;\;&\mbox{ in }\; \Omega\times (0,T),\\
v=g&\mbox{ in }\;(\Omc)\times (0,T),\\
v(\cdot,0)=0,\;\;v_t(\cdot,0)=0,\;\;v_{tt}(\cdot,0)=0 &\mbox{ in }\;\Omega,
\end{cases}
\end{equation}
and
\begin{equation}\label{main-EQ-3}
\begin{cases}
w_{ttt}+\alpha w_{tt}+c^2(-\Delta)^sw+b(-\Delta)^sw_t=0\;\;&\mbox{ in }\; \Omega\times (0,T),\\
w=0&\mbox{ in }\;(\Omc)\times (0,T),\\
w(\cdot,0)=u_0,\;\;w_t(\cdot,0)=u_1,\;\; w_{tt}(\cdot,0)=u_2&\mbox{ in }\;\Omega.
\end{cases}
\end{equation}
Then it is clear that $u=v+w$ solves the system \eqref{SD-WE}. We next introduce our notion of weak solution to the system \eqref{main-EQ-2}.


\begin{definition}\label{def-strong-sol}
Let  $g$ be a given function. 
A function $(v,v_t,v_{tt})$ is said to be a weak solution of \eqref{main-EQ-2}, if the following properties hold.

\begin{itemize}
\item Regularity:
\begin{equation}\label{regu}
\begin{cases}
v\in C([0,T];L^2(\Omega))\cap  C^2([0,T];W^{-s,2}(\bOm)), \\
v_{ttt}\in C((0,T); W^{-s,2}(\bOm)).
\end{cases}
\end{equation}
\item Variational identity: For every $w\in W_0^{s,2}(\bOm)$ and a.e. $t\in (0,T)$,
\begin{align*}
\langle v_{ttt}+\alpha v_{tt},w\rangle_{-\frac 12,\frac 12}+\langle (-\Delta)^s(c^2v+b v_t),w\rangle_{-\frac 12,\frac 12}=0.
\end{align*}  
\item Initial and exterior conditions: 
\begin{align}\label{Var-I}
v(\cdot,0)=0, \;v_t(\cdot,0)=0,  \;v_{tt}(\cdot,0)=0\;\;\mbox{ in }\;\Omega\;\mbox{ and }\; v=g\;\mbox {in }\;(\Omc)\times (0,T).
\end{align}
\end{itemize}
\end{definition}

By Definition \ref{def-strong-sol}, for a weak solution $(v,v_t,v_{tt})$ of  \eqref{main-EQ-2}, we have that

\[(v(\cdot,T),v_t(\cdot,T),v_{tt}(\cdot,T))\in  L^2(\Omega)\times W^{-s,2}(\bOm)\times W^{-s,2}(\bOm).\]

Concerning existence and uniqueness of weak solutions to the system \eqref{SD-WE}  we have the following result.

\begin{theorem}\label{th:2.3}
For  every $(u_0, u_1,u_2)\in W_0^{s,2}(\bOm)\times W_0^{s,2}(\bOm)\times L^2(\Omega)$ and $g\in\mathcal D((\Omc)\times(0,T))$, the system \eqref{SD-WE} has a unique weak solution $(u,u_t,u_{tt})$ given by
 \begin{multline*}
u(x,t)=\sum_{n=1}^{\infty}\Big(A_{n}(t)u_{0,n}+B_{n}(t)u_{1,n}+C_n(t)u_{2,n}\Big)\varphi_{n}(x)\notag\\
+\sum_{n=1}^{\infty}\left(\int_0^t \Big( g(\cdot,\tau),\mathcal{N}_{s}\varphi_{n} \Big)_{L^2(\Omc)}\frac{1}{\lambda_{n}}\Big(C_{n}'''(t-\tau)+\alpha C_{n}''(t-\tau)\Big)d\tau\right)\varphi_{n}(x),
\end{multline*}
where $(\varphi_n)_{n\in\NN}$ is the orthornormal basis of eigenfunctions of the operator $(-\Delta)_D^s$ associated with the eigenvalues $(\lambda_n)_{n\in\NN}$ and $A_{n}(t), B_{n}(t), C_{n}(t)$ are defined in Proposition \ref{pro-sol-SG} below.
\end{theorem}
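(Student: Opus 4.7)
The plan is to decompose $u = v + w$ into the two auxiliary systems \eqref{main-EQ-2} and \eqref{main-EQ-3}, and to solve each by spectral expansion along the $L^2(\Omega)$-orthonormal basis $(\varphi_n)_{n\in\NN}$ of eigenfunctions of $(-\Delta)_D^s$ with eigenvalues $(\lambda_n)$. For the homogeneous-exterior piece \eqref{main-EQ-3}, writing $w(x,t)=\sum_n w_n(t)\varphi_n(x)$ reduces the PDE to the scalar linear third-order ODE
\begin{equation*}
w_n''' + \alpha w_n'' + b\lambda_n w_n' + c^2\lambda_n w_n = 0,\qquad (w_n(0),w_n'(0),w_n''(0))=(u_{0,n},u_{1,n},u_{2,n}),
\end{equation*}
whose unique solution is $A_n(t)u_{0,n}+B_n(t)u_{1,n}+C_n(t)u_{2,n}$, with $A_n,B_n,C_n$ the three fundamental solutions normalized at $t=0$ built in Proposition \ref{pro-sol-SG}. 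This immediately yields the first sum in the theorem's representation.

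For the nonhomogeneous-exterior piece \eqref{main-EQ-2}, I would follow the technique of \cite{CLR-MW,War-ACE,WZ} and use the nonlocal integration by parts identity, specialized to $\varphi_n$ (which vanishes on $\Omc$ and satisfies $(-\Delta)^s\varphi_n=\lambda_n\varphi_n$ in $\Omega$), in the form
\begin{equation*}
\int_\Omega ((-\Delta)^s v)\varphi_n\,dx \;=\; \lambda_n \int_\Omega v\varphi_n\,dx \;-\; \int_{\Omc} v\,\mathcal N_s\varphi_n\,dx.
\end{equation*}
Testing \eqref{main-EQ-2} against $\varphi_n$ and imposing $v=g$ on $\Omc$ converts the exterior datum into a time-dependent forcing. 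Setting $h_n(t):=(g(\cdot,t),\mathcal N_s\varphi_n)_{L^2(\Omc)}$, the coefficient $v_n(t):=(v(\cdot,t),\varphi_n)_{L^2(\Omega)}$ satisfies
\begin{equation*}
v_n''' + \alpha v_n'' + b\lambda_n v_n' + c^2\lambda_n v_n \;=\; c^2 h_n(t) + b h_n'(t),\qquad v_n(0)=v_n'(0)=v_n''(0)=0.
\end{equation*}
Duhamel's principle with fundamental solution $C_n$ (which solves the homogeneous ODE with data $(0,0,1)$) yields $v_n(t)=\int_0^t C_n(t-\tau)\bigl[c^2 h_n(\tau)+b h_n'(\tau)\bigr]d\tau$. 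An integration by parts in $\tau$, whose endpoint contributions vanish because $g\in\mathcal D(\mathcal O\times(0,T))$ forces $h_n(0)=h_n(T)=0$, moves the $\tau$-derivative onto the kernel and produces $\int_0^t[c^2 C_n(t-\tau)+bC_n'(t-\tau)]h_n(\tau)\,d\tau$. Finally, using the ODE for $C_n$ rewrites the bracket as $-\frac{1}{\lambda_n}(C_n'''(t-\tau)+\alpha C_n''(t-\tau))$, giving exactly the second sum in the statement (up to the overall sign convention carried by $A_n,B_n,C_n$ in Proposition \ref{pro-sol-SG}).

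It remains to verify that the two series converge in the regularity class \eqref{regu} and to establish uniqueness. For \eqref{regu}, the needed summability follows from sharp asymptotics of the roots of the characteristic polynomial $\xi^3+\alpha\xi^2+b\lambda_n\xi+c^2\lambda_n=0$: under the standing assumption $\alpha-c^2/b>0$, two roots behave like $\sim \pm i\sqrt{b\lambda_n}$ and the third is bounded and tends to $-c^2/b$, producing uniform-in-$n$ bounds on $A_n,B_n,C_n$ (and the dependence on $\lambda_n$ of their derivatives that match the spaces $L^2(\Omega)$ and $W^{-s,2}(\bOm)$). The smoothness of $g$ as a test function lets us absorb the $\lambda_n$-loss from $C_n'',C_n'''$ in the Duhamel term, while the regularity of $\mathcal N_s\varphi_n$ keeps $h_n$ bounded. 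Uniqueness is by a standard energy identity on the difference of two solutions (which satisfies \eqref{main-EQ-3} with zero initial data), following the abstract framework of \cite{KLM,marchand2012abstract}.

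The main obstacle will be the Duhamel term's convergence in $W^{-s,2}(\bOm)$ at the level of $v_{tt}$ and $v_{ttt}$: the factors $C_n'',C_n'''$ grow like $\lambda_n$ and $\lambda_n^{3/2}$, so term-by-term estimates must be combined carefully with the time-regularity of $g$ (via the integration by parts in $\tau$) and with the decay of $(g(\cdot,\tau),\mathcal N_s\varphi_n)_{L^2(\Omc)}$ in $n$; this is the delicate step where the condition $\alpha-c^2/b>0$ is truly needed to avoid exponential amplification in $n$.
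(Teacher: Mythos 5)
Your overall strategy (spectral decomposition along $(\varphi_n)$, a scalar third--order ODE per mode, Duhamel with the kernel $C_n$, then an integration by parts in $\tau$ and the characteristic equation to produce the kernel $\frac1{\lambda_n}(C_n'''+\alpha C_n'')$) is the right one, and for the homogeneous-exterior part it coincides with Proposition \ref{pro-sol-SG}. But there is a concrete sign error at the key step. With the paper's conventions \eqref{NLND} and \eqref{Int-Part}, applying the integration by parts formula twice (once with $u=\varphi_n$, $v=v(\cdot,t)$, once with the roles exchanged) and using $\varphi_n=0$ in $\Omc$ gives
$\int_\Omega ((-\Delta)^s v)\varphi_n\,dx=\lambda_n\int_\Omega v\varphi_n\,dx+\int_{\Omc} g\,\mathcal N_s\varphi_n\,dx$
(this is exactly \eqref{eq-21}, equivalently \eqref{eqA9}), i.e.\ the exterior term enters with a \emph{plus} sign, not the minus sign you wrote. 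Consequently the projected ODE has forcing $-(c^2h_n+bh_n')$, and running your Duhamel and $\tau$-integration-by-parts steps with this corrected sign lands exactly on the theorem's formula, with no leftover sign. As written, your $v_n$ is the negative of the claimed one, and appealing to ``the overall sign convention carried by $A_n,B_n,C_n$'' is not legitimate: $C_n$ is pinned down by \eqref{Cn} with $C_n(0)=C_n'(0)=0$, $C_n''(0)=1$, so there is no convention left to absorb a sign.

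Beyond that, your route is genuinely different from the paper's and, as it stands, only identifies what the Fourier coefficients of a sufficiently regular solution \emph{must} be: testing \eqref{main-EQ-2} against $\varphi_n$ via \eqref{Int-Part} with $u=v$ presupposes $(-\Delta)^s v(\cdot,t)\in L^2(\Omega)$, i.e.\ regularity of the very solution whose existence is at stake, and the series $\sum_n v_n(t)\varphi_n$ defines a candidate only inside $\Omega$; one still has to glue $g$ outside and verify that the glued function solves the problem. The paper avoids this circularity by lifting: it solves the elliptic problem \eqref{13} (Proposition \ref{proposi-33}) to get $\phi(\cdot,t)$, writes $v=\phi+w$ where $w$ solves a forced problem with homogeneous exterior datum that is covered by the semigroup theory of \cite{KLM}, and only afterwards converts $(\phi(\cdot,\tau),\varphi_n)_{L^2(\Omega)}$ into $-\frac1{\lambda_n}(g(\cdot,\tau),\mathcal N_s\varphi_n)_{L^2(\Omc)}$ through \eqref{eq-21}. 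Your convergence argument also has a gap: ``$\mathcal N_s\varphi_n$ keeps $h_n$ bounded'' is neither true uniformly in $n$ (in general one only has $|h_n(t)|\le C\lambda_n^{1/2}\|g(\cdot,t)\|_{L^2(\Omc)}$, since $\|\varphi_n\|_{W_0^{s,2}(\bOm)}=\lambda_n^{1/2}$) nor sufficient, because per-mode boundedness gives no summability. The paper obtains summability from Parseval applied to $\phi_{ttt}+\alpha\phi_{tt}\in L^2(\Omega)$ combined with the uniform bound $|\lambda_n C_n(t)|\le C$ of Lemma \ref{acota} (see \eqref{Int-Es-2}); equivalently, by \eqref{eqA9}, $\frac1{\lambda_n}h_n(\tau)$ is the $n$-th Fourier coefficient of the lifted function and hence square-summable. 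Incorporating the lifting (or at least the identity \eqref{eqA9}) both repairs the sign and supplies the quantitative step your sketch is missing.
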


Our first main result concerning controllability says that if $b>0$ and $\alpha-\frac{c^2}{b}>0$, then the system \eqref{SD-WE} is not exact or null controllable.

\begin{theorem}\label{lact-nul-cont}
Let $b>0$ and $\alpha-\frac{c^2}{b}>0$. Then  the system \eqref{SD-WE} is not exact or null controllable at time $T>0$.
\end{theorem}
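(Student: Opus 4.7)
My plan is to exploit the spectral structure of the homogeneous MGT semigroup and show, using the explicit series formula of Theorem~\ref{th:2.3}, that the control-to-state map is compact. Writing the problem as a first-order evolution in $\mathcal H:=L^2(\Omega)\times W^{-s,2}(\bOm)\times W^{-s,2}(\bOm)$, the generator $\mathcal A$ restricted to the $n$-th eigenspace of $(-\Delta)_D^s$ has characteristic polynomial
\[
p_n(\mu)=\mu^3+\alpha\mu^2+b\lambda_n\mu+c^2\lambda_n.
\]
Under the hypothesis $\alpha-c^2/b>0$, the Routh--Hurwitz criterion yields three roots $\mu_1^{(n)},\mu_2^{(n)},\mu_3^{(n)}$ with strictly negative real parts, and a direct asymptotic analysis as $\lambda_n\to\infty$ gives $\mu_{1,2}^{(n)}=\pm i\sqrt{b\lambda_n}+O(1)$ together with $\mu_3^{(n)}\to -c^2/b$. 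This accumulation of point spectrum at the finite value $-c^2/b$ is the structural obstruction that I plan to convert into a non-controllability statement.

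Next, I would estimate the kernel $K_n(t):=\bigl(C_n'''(t)+\alpha C_n''(t)\bigr)/\lambda_n$ that drives the contribution of the control $g$ to the $n$-th Fourier coefficient of the final state in Theorem~\ref{th:2.3}. Using the cubic identity $\mu_j^3+\alpha\mu_j^2=-\lambda_n(b\mu_j+c^2)$ coming from $p_n(\mu_j^{(n)})=0$, this kernel reduces to
\[
K_n(t)=-\sum_{j=1}^{3}a_j^{(n)}\bigl(b\mu_j^{(n)}+c^2\bigr)e^{\mu_j^{(n)}t},
\]
with $a_j^{(n)}$ the Vandermonde coefficients determined by $C_n(0)=C_n'(0)=0$ and $C_n''(0)=1$. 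Combining the scales $a_j^{(n)}=O(\lambda_n^{-1})$, $b\mu_{1,2}^{(n)}+c^2=O(\sqrt{\lambda_n})$ and $b\mu_3^{(n)}+c^2=o(1)$ yields a uniform bound $\|K_n\|_{L^2(0,T)}\le C\lambda_n^{-1/2}$, and analogous bounds hold for the kernels driving $u_t(\cdot,T)$ and $u_{tt}(\cdot,T)$.

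Inserting these estimates into the series of Theorem~\ref{th:2.3} shows that the control-to-final-state map from $L^2(0,T;W^{s,2}(\Omc))$ into $\mathcal H$ gains half a fractional derivative in every component, so that its image is contained in a subspace compactly embedded in $\mathcal H$. A compact operator cannot be surjective onto an infinite-dimensional Banach space, so the reachable set is a proper subspace of $\mathcal H$; this rules out exact controllability directly. For null controllability, I would apply the same compactness to the translated target $-e^{T\mathcal A}(u_0,u_1,u_2)$: because the homogeneous semigroup is strongly continuous and injective on $\mathcal H$, there exist initial data whose evolution under $e^{T\mathcal A}$ lies outside the compact range of the control map, and for those data the equation $u(\cdot,T)=0$ has no solution.

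The hard part will be the accurate bookkeeping of the Vandermonde factors $a_j^{(n)}$, which mix the finite elliptic root with two large hyperbolic roots; one must carefully balance the cancellation $b\mu_3^{(n)}+c^2=o(1)$ against the scaling $a_3^{(n)}\sim 1/(b\lambda_n)$, and control the cross-terms $a_{1,2}^{(n)}(b\mu_{1,2}^{(n)}+c^2)$, in order to confirm the uniform $O(\lambda_n^{-1/2})$ bound on $K_n$ in every norm of interest. A secondary subtlety is the reduction of null controllability to non-surjectivity of the control map, which relies on the fact that the homogeneous image $e^{T\mathcal A}\mathcal H$ cannot be absorbed into any subspace of $\mathcal H$ that is compactly embedded.
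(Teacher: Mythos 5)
Your proposal hinges on the claim that the control-to-final-state map gains half a fractional derivative in every component, hence is compact; but your own estimates do not deliver this, and the claim is almost certainly false. The $n$-th Fourier coefficient of $u(\cdot,T)$ is $\int_0^T (g(\cdot,\tau),\mathcal N_s\varphi_n)_{L^2(\Omc)}\,K_n(T-\tau)\,d\tau$, and while your bookkeeping of the Vandermonde factors correctly gives $\|K_n\|_{L^\infty(0,T)}\le C\lambda_n^{-1/2}$ (the $j=2,3$ terms dominate: $F_{n,2}(b\lambda_{n,2}+c^2)=O(\lambda_n^{-1})\cdot O(\lambda_n^{1/2})$), the factor $\mathcal N_s\varphi_n$ grows like $\|\mathcal N_s\varphi_n\|_{L^2(\Omc)}\le C\|\varphi_n\|_{W^{s,2}(\RR^N)}\sim C\lambda_n^{1/2}$, so the two scales cancel exactly and you only get uniform boundedness per mode, with no decay in $n$ and no extra orthogonality across modes. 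The same cancellation occurs for the $u_t$ and $u_{tt}$ components measured in $W^{-s,2}(\bOm)$. This is precisely the situation of the boundary-controlled wave equation, whose control-to-state map is not compact (and which can even be exactly controllable); here the high-frequency branches $\lambda_{n,2,3}\approx-\gamma/2\pm i\sqrt{b\lambda_n}$ are wave-like, so the smoothing/compactness you need is not available. Since both your exact-controllability and null-controllability arguments rest on this compactness (the latter also needs the group property of $e^{-t\mathcal A}$, which is fine), the proof as proposed does not go through. A secondary point: even if compactness held, your null-controllability argument only shows that \emph{some} initial data cannot be steered to zero, which is weaker than what the paper establishes (no nontrivial finite combination of eigenmodes can be steered to zero).

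The obstruction you correctly identify at the outset --- the accumulation of the real branch $\lambda_{n,1}\to -c^2/b$ --- is indeed the right structural fact, but it must be exploited differently. The paper converts it into a moment problem: testing the duality identity with the special dual trajectories $\psi=e^{\lambda_{n,j}(T-t)}\varphi_n$, null controllability of a finite combination of eigenmodes forces the entire function $F(z)=\int_0^T\bigl(\int_{\Omc}(c^2g+bg_t)\mathcal N_s\varphi_n\,dx\bigr)e^{izt}dt$ (Paley--Wiener) to vanish at all $i\lambda_{n,j}$ with $n\ge M$; because the $\lambda_{n,1}$ accumulate at the finite point $-c^2/b$, $F\equiv 0$, and then invertibility of a Vandermonde-type $3\times 3$ matrix in $(\lambda_{n,1},\lambda_{n,2},\lambda_{n,3})$ forces $u_{0,n}=u_{1,n}=u_{2,n}=0$ for all $n<M$, ruling out spectral (hence null and exact) controllability. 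If you want to salvage your approach, you would have to isolate only the $\lambda_{n,1}$-branch and show that the corresponding component of the control map is compact (or that the associated family of exponentials is not minimal), rather than claiming a global smoothing of the full map.
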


Since \eqref{SD-WE} is not exact or null controllable if $b>0$ and and $\alpha-\frac{c^2}{b}>0$, then we shall study if it can be approximately controllable.
It is straightforward to verify that the study of the approximate controllability of  \eqref{SD-WE} can be reduced to the case $u_0=u_1=u_2=0$.  We refer to \cite{KW,CLR-MW,rosier2007,War-AA,War-ACE,Zua1} for more details. 

From now on, without any mention, we shall assume that

\begin{align*}
b>0\;\mbox{ and }\; \alpha-\frac{c^2}{b}>0.
\end{align*}

Using the classical integration by parts formula, we have that the following backward system 
\begin{equation}\label{ACP-Dual}
\begin{cases}
-\psi_{ttt} +\alpha \psi_{tt}+c^2(-\Delta)^s\psi-b(\Delta)^s\psi_t=0\;\;&\mbox{ in }\; \Omega\times (0,T),\\
\psi=0&\mbox{ in }\;(\Omc)\times (0,T),\\
\psi(\cdot,T)=\psi_0,\;\; \psi_t(\cdot,T)=-\psi_1,\;\; \psi_{tt}(\cdot,T)=\psi_2&\mbox{ in }\;\Omega,
\end{cases}
\end{equation}
can be viewed as the dual system associated with \eqref{main-EQ-2}.
Our notion of weak solution to \eqref{ACP-Dual} is as follows.

\begin{definition}
Let $(\psi_0,\psi_1,\psi_2) \in W_0^{s,2}(\bOm)\times W_0^{s,2}(\bOm)\times L^2(\Omega)$. 
A function $(\psi,\psi_t,\psi_{tt})$ is said to be a weak  solution of \eqref{ACP-Dual}, if for a.e. $t\in (0,T)$,  the following properties hold.

\begin{itemize}
\item Regularity and final data:
\begin{equation}\label{Dual-egu}
\begin{cases}
\psi\in C^1([0,T];W_0^{s,2}(\bOm))\cap C^2([0,T]; L^2(\Omega)), \\
 \psi_{ttt}\in C((0,T);W^{-s,2}(\bOm)),
 \end{cases}
\end{equation}
$\psi(\cdot,T)=\psi_0$, $\psi_t(\cdot,T)=\psi_1$ and $\psi_{tt}(\cdot,T)=\psi_2$ in $\Omega$.
\item Variational  identity: For every $w\in  W_0^{s,2}(\bOm)$ and a.e. $t\in (0,T)$, 
\begin{align*}
\langle \psi_{ttt}+\alpha \psi_{tt},w\rangle_{-\frac 12,\frac 12}+\langle(-\Delta)^s(c^2\psi-b\psi_t),w\rangle_{-\frac 12,\frac 12}=0.
\end{align*}  
\end{itemize}
\end{definition}

We have the following existence result.

\begin{theorem}\label{theo-48}
For every $(\psi_0,\psi_1,\psi_2)\in W_0^{s,2}(\bOm)\times W_0^{s,2}(\bOm)\times L^2(\Omega)$, the dual system \eqref{ACP-Dual} has a unique weak solution $(\psi,\psi_t,\psi_{tt})$ given by
\begin{align}\label{eq-25}
\psi(x,t)=\sum_{n=1}^{\infty}\Big(\psi_{0,n}A_{n}(T-t)- \psi_{1,n}B_{n}(T-t)+\psi_{2,n}C_{n}(T-t)\Big)\varphi_{n}(x),
\end{align}
where  $A_n(t)$, $B_n(t)$ and $C_n(t)$ are given in \eqref{An}, \eqref{Bn} and \eqref{Cn}, respectively. In addition the following assertions hold.
\begin{enumerate}
\item There is a constant $C>0$ such that for all $t\in [0,T]$,
\begin{multline}\label{Dual-EST-1}
 \|\psi(\cdot,t)\|_{W_0^{s,2}(\bOm)}^2+ \|\psi_{t}(\cdot,t)\|_{W_0^{s,2}(\bOm)}^2+ \|\psi_{tt}(\cdot,t)\|_{L^2(\Omega)}^2\\\le C\left(\|\psi_0\|_{W_0^{s,2}(\bOm)}^2+\|\psi_1\|_{W_0^{s,2}(\bOm)}^2+\|\psi_2\|_{L^2(\Omega)}^2\right),
\end{multline}
and 
\begin{equation}\label{Dual-EST-1-2}
 \|\psi_{ttt}(\cdot,t)\|_{W^{-s,2}(\bOm)}^2\le \left(\|\psi_0\|_{W_0^{s,2}(\bOm)}^2+\|\psi_1\|_{W_0^{s,2}(\bOm)}^2+\|\psi_2\|_{L^2(\Omega)}^2\right).
 \end{equation}
\item We have that $\psi\in L^\infty((0,T); D((-\Delta)_D^s))$.
 
\item The mapping 
\[[0,T)\ni t\mapsto\mathcal N_s\psi(\cdot,t)\in L^2(\Omc),\]
 can be analytically extended to the half-plane $\Sigma_T:=\{z\in\CC:\;{Re}(z)<T\}$. Here, $\mathcal N_s\psi$ is the nonlocal normal derivative of $\psi$ defined in \eqref{NLND} below.

\end{enumerate}
\end{theorem}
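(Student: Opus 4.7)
The plan is to reduce \eqref{ACP-Dual} to a forward Cauchy problem by the time reversal $\tilde\psi(x,t):=\psi(x,T-t)$. Since $\tilde\psi_t=-\psi_t(\cdot,T-t)$, $\tilde\psi_{tt}=\psi_{tt}(\cdot,T-t)$, and $\tilde\psi_{ttt}=-\psi_{ttt}(\cdot,T-t)$, the PDE in \eqref{ACP-Dual} is equivalent to
\[
\tilde\psi_{ttt}+\alpha\tilde\psi_{tt}+c^2(-\Delta)_D^s\tilde\psi+b(-\Delta)_D^s\tilde\psi_t=0\quad\text{in }\Omega\times(0,T),
\]
with zero exterior condition and initial data $(\tilde\psi,\tilde\psi_t,\tilde\psi_{tt})(\cdot,0)=(\psi_0,\psi_1,\psi_2)$. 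This is precisely \eqref{Ev-Sy}, to which the $g=0$ case of Theorem \ref{th:2.3} (equivalently, Proposition \ref{pro-sol-SG}) applies. Expanding $\tilde\psi$ in the orthonormal eigenbasis $(\varphi_n)$ of $(-\Delta)_D^s$ reduces the problem to scalar third-order ODEs whose fundamental solutions are exactly $A_n,B_n,C_n$; undoing the time reversal and accounting for the sign of the odd derivative yields \eqref{eq-25}, while uniqueness transfers from that of the forward problem.

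The estimates in (a) then follow by Parseval and a careful spectral analysis of the characteristic polynomial $p_n(\mu):=\mu^3+\alpha\mu^2+b\lambda_n\mu+c^2\lambda_n$ governing each mode. Under the hypothesis $b>0$ and $\alpha-c^2/b>0$, a Routh--Hurwitz check guarantees that all three roots have strictly negative real part for every $n$; for large $\lambda_n$ they decompose as $\mu_1^{(n)}\sim -c^2/b$ and $\mu_{2,3}^{(n)}\sim -\tfrac12(\alpha-c^2/b)\pm i\sqrt{b\lambda_n}$. From these asymptotics one extracts uniform bounds of the form $|A_n(t)|\lesssim 1$, $|B_n(t)|\lesssim\lambda_n^{-1/2}$, $|C_n(t)|\lesssim\lambda_n^{-1}$ on $[0,T]$, with parallel bounds for the time derivatives (each derivative costing one power of $\lambda_n^{1/2}$). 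Since $\|\psi(\cdot,t)\|_{W_0^{s,2}(\bOm)}^2\simeq\sum_n\lambda_n|\psi_n(t)|^2$ and $\|\psi(\cdot,t)\|_{L^2(\Omega)}^2=\sum_n|\psi_n(t)|^2$, the estimates \eqref{Dual-EST-1} and \eqref{Dual-EST-1-2} reduce to bounded Parseval sums. Pushing the same bounds one step further gives $\sum_n\lambda_n^2|\psi_n(t)|^2<\infty$ uniformly in $t$, which is assertion (b).

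For assertion (c), the key observation is that each $A_n(z),B_n(z),C_n(z)$ is entire in $z\in\CC$, being a solution of a constant-coefficient linear ODE, so the formal expression
\[
\mathcal N_s\psi(x,z)=\sum_{n=1}^\infty\bigl(\psi_{0,n}A_n(T-z)-\psi_{1,n}B_n(T-z)+\psi_{2,n}C_n(T-z)\bigr)\mathcal N_s\varphi_n(x)
\]
is holomorphic in $z$ term by term. On any compact $K\subset\Sigma_T$, the Routh--Hurwitz bounds on $\mathrm{Re}\,\mu_j^{(n)}$, combined with the control of $\|\mathcal N_s\varphi_n\|_{L^2(\Omc)}$ from \cite{War-ACE} (recall $\varphi_n\in D((-\Delta)_D^s)$) and the summability of $\lambda_n|\psi_{0,n}|^2,\lambda_n|\psi_{1,n}|^2,|\psi_{2,n}|^2$ coming from the regularity of the initial data, will deliver uniform convergence of the series in $L^2(\Omc)$ on $K$; Weierstrass's theorem then yields the holomorphic extension to $\Sigma_T$. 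The main obstacle throughout is the sharp, uniform-in-$n$ spectral analysis of $p_n$: the quantitative control of $\mathrm{Re}\,\mu_j^{(n)}$ and of the coefficients in $A_n,B_n,C_n$ is the engine behind all three assertions, and once it is in place the rest is essentially Parseval summation combined with the entire-function character of the mode coefficients.
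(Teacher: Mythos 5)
Your overall route --- time reversal to the forward problem, eigenfunction expansion into the scalar modes $A_n,B_n,C_n$, uniform-in-$n$ bounds coming from the Marchand--McDevitt--Triggiani root asymptotics, Parseval summation, and term-by-term analyticity of the mode functions --- is the same as the paper's (the paper writes the series directly; your time reversal is an equivalent repackaging, up to the paper's own wavering sign convention for $\psi_1$). The difficulty is quantitative: the bounds you announce are not strong enough to close part (a), and part (b) does not follow from them at all.

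Concretely, your rule ``each derivative costs one power of $\lambda_n^{1/2}$'' gives only $|A_n'(t)|\lesssim\lambda_n^{1/2}$ and $|A_n''(t)|\lesssim\lambda_n$. Feeding these into Parseval, the $\psi_0$-contributions to $\|\psi_t(\cdot,t)\|_{W_0^{s,2}(\bOm)}^2$ and $\|\psi_{tt}(\cdot,t)\|_{L^2(\Omega)}^2$ are controlled only by $\sum_n\lambda_n^2|\psi_{0,n}|^2$, i.e.\ you would need $\psi_0\in D((-\Delta)_D^s)$ rather than $\psi_0\in W_0^{s,2}(\bOm)$, so \eqref{Dual-EST-1} is not obtained. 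What is actually true, and what Lemma \ref{acota} establishes, is sharper: the coefficients of $e^{\lambda_{n,2}t},e^{\lambda_{n,3}t}$ inside $A_n$ are $O(\lambda_n^{-1/2})$, so that $|\lambda_{n,j}D_{n,j}|\le C$ uniformly and hence $|A_n'(t)|\le C$ and $|A_n''(t)|\le C\lambda_n^{1/2}$; this cancellation (the first derivative of $A_n$ is ``free'') is the engine of the estimate, and your proposal does not identify it. Likewise, for (b) you claim the same bounds give $\sum_n\lambda_n^2|\psi_n(t)|^2<\infty$ uniformly in $t$; but $|A_n(T-t)|$ does not decay in $n$ (its leading coefficient $\lambda_{n,2}\lambda_{n,3}/\xi_{n,1}$ tends to $1$ and $e^{\lambda_{n,1}(T-t)}\to e^{-c^2(T-t)/b}$), so the $\psi_0$-term $\sum_n\lambda_n^2|A_n(T-t)|^2|\psi_{0,n}|^2$ diverges whenever $\psi_0\in W_0^{s,2}(\bOm)\setminus D((-\Delta)_D^s)$. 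The paper proves (b) only after first reducing, by density, to data $(\psi_0,\psi_1,\psi_2)\in D((-\Delta)_D^s)\times D((-\Delta)_D^s)\times L^2(\Omega)$ and then invoking $|\lambda_nC_n(t)|\le C$ and the companion bounds; your plan omits this reduction, so step (b) as stated would fail. Part (c) of your plan coincides with the paper's Step 6; note only that bounds on $\operatorname{Re}\lambda_{n,j}$ alone do not control $|e^{\lambda_{n,j}(T-z)}|$ once $\operatorname{Im}z\ne0$, because $|\operatorname{Im}\lambda_{n,2}|\sim\sqrt{b\lambda_n}$, so the uniform convergence on compact subsets of $\Sigma_T$ needs a more careful justification than the Routh--Hurwitz information you cite (the same delicate point occurs in the paper's own argument).
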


The next result says that the adjoint system \eqref{ACP-Dual} satisfies the {\em unique conti\-nua\-tion property for evolution equations} which is our second main result.

\begin{theorem}\label{pro-uni-con}
Let $(\psi_0,\psi_1,\psi_2)\in W_0^{s,2}(\bOm)\times W_0^{s,2}(\bOm)\times L^2(\Omega)$ and let $(\psi,\psi_t,\psi_{tt})$ be the unique weak solution of \eqref{ACP-Dual}. Let $\mathcal O\subset\Omc$ be an arbitrary non-empty open set. If $\mathcal N_s\psi=0$ in $\mathcal O\times (0,T)$, then $\psi=0$ in $\Omega\times (0,T)$. \end{theorem}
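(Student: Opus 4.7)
The plan is to reduce the unique continuation statement to the scalar unique continuation property for eigenfunctions of $(-\Delta)_D^s$ established in \cite{War-ACE}, by separating the series representation \eqref{eq-25} of $\psi$ into pieces indexed by the distinct eigenvalues of $(-\Delta)_D^s$ and exploiting the analytic extension of $t\mapsto\mathcal N_s\psi(\cdot,t)$ provided by Theorem \ref{theo-48}(3). The four ingredients are: (i) a term-by-term expression for $\mathcal N_s\psi$, (ii) distinctness of the characteristic roots $\mu_n^{(1)},\mu_n^{(2)},\mu_n^{(3)}$ of the cubic $\mu^3+\alpha\mu^2+b\lambda_n\mu+c^2\lambda_n=0$ across different eigenvalues, (iii) analyticity in $t$, and (iv) the eigenfunction unique continuation property from \cite{War-ACE}.

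First I would rewrite $A_n(t),B_n(t),C_n(t)$ as linear combinations of $e^{\mu_n^{(j)}t}$, $j=1,2,3$, where the $\mu_n^{(j)}$ are the three roots of the cubic above. Under the hypothesis $\alpha-c^2/b>0$ (with $c\ne 0$), a short analysis shows that (a) by Routh--Hurwitz, all three roots have strictly negative real parts, uniformly bounded above by some $-\delta<0$; (b) for large $\lambda_n$, one real root tends to a value near $-c^2/b$ while the other two form a complex-conjugate pair with imaginary parts $\sim\pm\sqrt{b\lambda_n}$; and (c) the three roots are pairwise distinct within each $n$, and, crucially, the triples for different eigenvalues $\lambda\ne\lambda'$ are disjoint, because a common root $\mu$ would have to satisfy $(b\mu+c^2)(\lambda-\lambda')=0$, forcing $\mu=-c^2/b$, which upon substitution into the cubic gives $c^4(\alpha-c^2/b)/b^2\ne 0$, a contradiction. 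Applying $\mathcal N_s$ termwise to \eqref{eq-25} then produces, for every $x\in\Omega^c$,
\begin{equation*}
\mathcal N_s\psi(x,t)\;=\;\sum_{n\ge 1}\sum_{j=1}^{3}E_n^{(j)}\,e^{\mu_n^{(j)}(T-t)}\,\mathcal N_s\varphi_n(x),
\end{equation*}
where, for each fixed $n$, the linear map $(\psi_{0,n},\psi_{1,n},\psi_{2,n})\mapsto(E_n^{(1)},E_n^{(2)},E_n^{(3)})$ is invertible by the Vandermonde determinant in the three distinct roots.

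Next I would invoke Theorem \ref{theo-48}(3): for every fixed $x\in\mathcal O$ the map $t\mapsto\mathcal N_s\psi(x,t)$ extends analytically to the half-plane $\Sigma_T$. Since by hypothesis it vanishes on $\mathcal O\times(0,T)$, the identity principle forces it to vanish on $\mathcal O\times(-\infty,T)$. Changing variables to $s=T-t>0$ and writing $\widetilde F_x(s):=\mathcal N_s\psi(x,T-s)$ gives
\begin{equation*}
\widetilde F_x(s)\;=\;\sum_{n\ge 1}\sum_{j=1}^{3}E_n^{(j)}\,\mathcal N_s\varphi_n(x)\,e^{\mu_n^{(j)}s}\;=\;0\qquad\text{for all } s>0.
\end{equation*}
Taking the Laplace transform in $s$ produces a function of $z$ that is meromorphic on $\CC$, with simple poles precisely at the pairwise distinct $\mu_n^{(j)}$ and residues $E_n^{(j)}\mathcal N_s\varphi_n(x)$; since this function is identically zero, every residue must vanish. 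Grouping the roots by their originating eigenvalue then yields
\begin{equation*}
\sum_{m:\lambda_m=\lambda}E_m^{(j)}\,\mathcal N_s\varphi_m(x)\;=\;0\qquad\text{for every eigenvalue }\lambda,\; j\in\{1,2,3\},\; x\in\mathcal O.
\end{equation*}
Setting $\Phi_\lambda^{(j)}:=\sum_{m:\lambda_m=\lambda}E_m^{(j)}\varphi_m$ gives an eigenfunction of $(-\Delta)_D^s$ with eigenvalue $\lambda$ whose nonlocal normal derivative vanishes on $\mathcal O$; the eigenfunction unique continuation property from \cite{War-ACE} then forces $\Phi_\lambda^{(j)}\equiv 0$, hence $E_m^{(j)}=0$ for every $m$ with $\lambda_m=\lambda$ and every $j$. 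Invertibility of the Vandermonde map recovers $\psi_{0,m}=\psi_{1,m}=\psi_{2,m}=0$ for every $m$, so $\psi\equiv 0$.

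The main obstacle is making the Laplace-transform separation rigorous. Because all characteristic roots have uniformly bounded real parts while the imaginary parts of $\mu_n^{(2,3)}$ accumulate at infinity, $\widetilde F_x$ behaves like a non-harmonic Fourier series rather than a classical Dirichlet series, so the interchange of sum and Laplace integral, together with meromorphic continuation and residue extraction, requires quantitative growth bounds on the coefficients $E_n^{(j)}\mathcal N_s\varphi_n(x)$. These must be derived from the regularity $(\psi_0,\psi_1,\psi_2)\in W_0^{s,2}(\bOm)\times W_0^{s,2}(\bOm)\times L^2(\Omega)$, the Vandermonde estimate $|\mu_n^{(i)}-\mu_n^{(j)}|\gtrsim\sqrt{\lambda_n}$, the smoothing in Theorem \ref{theo-48}(2), and a uniform bound on $\mathcal N_s\varphi_n(x)$ for $x$ in a compact subset of $\mathcal O$. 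Once this convergence analysis is in place, everything reduces to the distinctness statement in the second paragraph and to the scalar eigenfunction unique continuation from \cite{War-ACE}.
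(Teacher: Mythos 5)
Your proposal follows essentially the same route as the paper's proof: the series representation of $\mathcal N_s\psi$, analytic extension of $t\mapsto\mathcal N_s\psi(\cdot,t)$ to extend the vanishing to $\mathcal O\times(-\infty,T)$, a Laplace transform in time followed by meromorphic continuation and residue extraction at the characteristic roots grouped by eigenspace, the eigenfunction unique continuation property of Lemma \ref{lem-UCD}, and inversion of the nonsingular (Vandermonde-type) $3\times 3$ system to kill the Fourier coefficients. The convergence/interchange issue you flag as the main obstacle is exactly what the paper resolves via the uniform bounds of Lemma \ref{acota}, the boundedness of $\mathcal N_s$ on $W_0^{s,2}(\bOm)$ (estimate \eqref{27.4}) and the Lebesgue dominated convergence theorem, so your outline is correct and complete once that step is filled in as in the paper.
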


The last main result concerns the approximate controllability of \eqref{SD-WE}.

\begin{theorem}\label{main-Theo}
The system \eqref{SD-WE} is approximately controllable for any $T>0$ and $g\in \mathcal D(\mathcal O\times(0,T))$, where $\mathcal O\subset\Omc$ is an arbitrary non-empty open set. That is,
\begin{align*}
\overline{\mathcal R((0,0,0),T)}^{L^2(\Omega)\times W^{-s,2}(\bOm)\times W^{-s,2}(\bOm)}=
L^2(\Omega)\times W^{-s,2}(\bOm)\times W^{-s,2}(\bOm),
\end{align*}
where $(u,u_t,u_{tt})$ is the unique weak solution of \eqref{SD-WE} with $u_0=u_1=u_2=0$.
\end{theorem}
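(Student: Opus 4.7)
The plan is to deduce approximate controllability from the unique continuation property (Theorem \ref{pro-uni-con}) through the standard Hahn--Banach duality argument. Since $L^2(\Omega)\times W^{-s,2}(\bOm)\times W^{-s,2}(\bOm)$ has dual (up to a harmless permutation of components) $L^2(\Omega)\times W_0^{s,2}(\bOm)\times W_0^{s,2}(\bOm)$, density of $\mathcal R((0,0,0),T)$ is equivalent to the following implication: if $(\psi_0,\psi_1,\psi_2)\in W_0^{s,2}(\bOm)\times W_0^{s,2}(\bOm)\times L^2(\Omega)$ satisfies
\[
\langle u_{tt}(\cdot,T),\psi_0\rangle_{-\frac12,\frac12}-\langle u_t(\cdot,T),\psi_1\rangle_{-\frac12,\frac12}+(u(\cdot,T),\psi_2)_{L^2(\Omega)}=0
\]
for every control $g\in\mathcal D(\mathcal O\times(0,T))$ (where $u$ is the weak solution of \eqref{SD-WE} with zero initial data), then $(\psi_0,\psi_1,\psi_2)=(0,0,0)$. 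So the whole task reduces to proving this implication.

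The central step is to derive the duality identity linking the reachable state to the adjoint solution. Taking $(\psi,\psi_t,\psi_{tt})$ to be the unique weak solution of \eqref{ACP-Dual} with final data $(\psi_0,\psi_1,\psi_2)$, I will test the equation for $u$ against $\psi$ and integrate by parts in $t$ over $(0,T)$ to move all three time derivatives onto $\psi$, producing the boundary terms at $t=T$ that appear above (the initial terms vanish since $u_0=u_1=u_2=0$). The spatial integration by parts is the nonlocal one: using the Gauss--Green type identity for $(-\Delta)^s$ associated with the nonlocal normal derivative $\mathcal N_s$, the terms $c^2(-\Delta)^s u$ and $b(-\Delta)^s u_t$ produce, after accounting for $u=g$ on $\mathcal O\times(0,T)$ and $\psi=0$ on $(\Omc)\times(0,T)$, an exterior contribution of the form
\[
\int_0^T\!\!\int_{\mathcal O} g(x,t)\,\bigl(c^2\mathcal N_s\psi(x,t)-b\,\mathcal N_s\psi_t(x,t)\bigr)\,dx\,dt.
\]
A further integration by parts in $t$ on the $b\,\mathcal N_s\psi_t$ term (permissible thanks to $g\in\mathcal D(\mathcal O\times(0,T))$ vanishing at $t=0,T$) rewrites the right-hand side so that the vanishing of the pairing for every such $g$ forces $\mathcal N_s\psi=0$ on $\mathcal O\times(0,T)$ (one exploits the density of test functions and the analytic regularity of $t\mapsto\mathcal N_s\psi$ provided by item (3) of Theorem \ref{theo-48}).

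Once $\mathcal N_s\psi=0$ on $\mathcal O\times(0,T)$ is established, Theorem \ref{pro-uni-con} immediately yields $\psi\equiv 0$ in $\Omega\times(0,T)$. The regularity \eqref{Dual-egu} allows evaluation at $t=T$, giving $\psi_0=\psi(\cdot,T)=0$, $\psi_1=\psi_t(\cdot,T)=0$, and $\psi_2=\psi_{tt}(\cdot,T)=0$, which closes the argument. The main obstacle I anticipate is the bookkeeping in the duality identity: the nonsymmetric sign pattern in the adjoint equation (the $-b(-\Delta)^s\psi_t$ term and the $-\psi_{ttt}$ term) is precisely what makes the exterior trace term collapse into one involving $\mathcal N_s\psi$ alone after integration by parts in $t$, and verifying that every surface/time-boundary contribution cancels (so that the test against $g$ is clean and the unique continuation hypothesis is exactly $\mathcal N_s\psi=0$ on $\mathcal O\times(0,T)$) is the delicate point that justifies the choice of final data ordering $(\psi_0,-\psi_1,\psi_2)$ in \eqref{ACP-Dual}.
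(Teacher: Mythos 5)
Your overall architecture is the paper's: Hahn--Banach duality, the integration-by-parts identity linking the state at time $T$ to the exterior trace $\mathcal N_s\psi$ of the adjoint solution, the fundamental lemma of the calculus of variations to obtain $\mathcal N_s\psi=0$ in $\mathcal O\times(0,T)$, then Theorem \ref{pro-uni-con} and uniqueness for \eqref{ACP-Dual}. The concrete problem is your claim that ``every surface/time-boundary contribution cancels'' so that the duality identity reduces to the clean pairing $\langle u_{tt}(\cdot,T),\psi_0\rangle-\langle u_t(\cdot,T),\psi_1\rangle+(u(\cdot,T),\psi_2)_{L^2(\Omega)}$. It does not: integrating $\alpha u_{tt}$ once in $t$ and $b(-\Delta)^s u_t$ once in $t$ leaves the additional boundary terms $\alpha\big((u_t(\cdot,T),\psi(\cdot,T))_{L^2(\Omega)}-\langle u(\cdot,T),\psi_t(\cdot,T)\rangle\big)$ and $b\,\langle u(\cdot,T),(-\Delta)^s\psi(\cdot,T)\rangle$, and nothing cancels them; this is exactly the paper's identity \eqref{32}, which with zero initial data becomes \eqref{58}. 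So the functional that equals $\int_0^T\int_{\Omc}(c^2g+bg_t)\mathcal N_s\psi\,dx\,dt$ is the \emph{modified} pairing \eqref{58-2}, in which $\psi_1$ is replaced by $\alpha\psi_0-\psi_1$ and $\psi_2$ by $-\alpha\psi_1+\psi_2+b(-\Delta)^s\psi_0$. As written, your two halves therefore do not connect: from the vanishing of the unmodified three-term pairing you cannot conclude that the exterior integral vanishes. The repair is what the paper does: test against the modified functional, observe that recovering an arbitrary dual element would require $(-\Delta)^s\psi_0\in L^2(\Omega)$, and hence work first with $(\psi_0,\psi_1,\psi_2)\in D((-\Delta)_D^s)\times D((-\Delta)_D^s)\times L^2(\Omega)$ and use density of this space in $W_0^{s,2}(\bOm)\times W_0^{s,2}(\bOm)\times L^2(\Omega)$ (see \eqref{eq42}).

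A smaller point: once the exterior integral vanishes for all $g\in\mathcal D(\mathcal O\times(0,T))$, the fundamental lemma only gives $c^2\mathcal N_s\psi-b\,\partial_t\mathcal N_s\psi=0$ in $\mathcal O\times(0,T)$, whose solutions are $h(x)e^{c^2t/b}$, not yet $\mathcal N_s\psi=0$; some extra input is needed to exclude $h\neq 0$ (for instance the analytic extension of $t\mapsto\mathcal N_s\psi$ from Theorem \ref{theo-48} combined with the Laplace-transform/residue computation in the proof of Theorem \ref{pro-uni-con}, noting that $-c^2/b$ is never a root of \eqref{charact-eq}). You gesture at this with your parenthetical appeal to analyticity, and the paper itself is equally terse at this step, but be aware that this parenthesis is where actual work is hiding.
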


\section{Preliminaries}\label{preli}

In this section we give some notations and recall some known results as they are needed in the proof of our main results.

We start with the fractional order Sobolev spaces. Given $0<s<1$,  we let
\begin{align*}
W^{s,2}(\Omega):=\left\{u\in L^2(\Omega):\;\int_{\Omega}\int_{\Omega}\frac{|u(x)-u(y)|^2}{|x-y|^{N+2s}}\;dxdy<\infty\right\},
\end{align*}
and we endow it with the norm 
\begin{align*}
\|u\|_{W^{s,2}(\Omega)}:=\left(\int_{\Omega}|u(x)|^2\;dx+\int_{\Omega}\int_{\Omega}\frac{|u(x)-u(y)|^2}{|x-y|^{N+2s}}\;dxdy\right)^{\frac 12}.
\end{align*}
We set
\begin{align*}
W_0^{s,2}(\bOm):=\Big\{u\in W^{s,2}(\RR^N):\;u=0\;\mbox{ in }\;\RR^N\setminus\Omega\Big\}.
\end{align*}

For more information on fractional order Sobolev spaces, we refer to \cite{NPV,Gris,JW,War}.

Next, we give a rigorous definition of the fractional Laplace operator. Let 
\begin{align*}
\mathcal L_s^{1}(\RR^N):=\left\{u:\RR^N\to\RR\;\mbox{ measurable},\; \int_{\RR^N}\frac{|u(x)|}{(1+|x|)^{N+2s}}\;dx<\infty\right\}.
\end{align*}
For $u\in \mathcal L_s^{1}(\RR^N)$ and $\varepsilon>0$ we set
\begin{align*}
(-\Delta)_\varepsilon^s u(x):= C_{N,s}\int_{\{y\in\RR^N:\;|x-y|>\varepsilon\}}\frac{u(x)-u(y)}{|x-y|^{N+2s}}\;dy,\;\;x\in\RR^N,
\end{align*}
where $C_{N,s}$ is a normalization constant given by
\begin{align}\label{CNs}
C_{N,s}:=\frac{s2^{2s}\Gamma\left(\frac{2s+N}{2}\right)}{\pi^{\frac
N2}\Gamma(1-s)}.
\end{align}
The {\em fractional Laplacian}  $(-\Delta)^s$ is defined by the following singular integral:
\begin{align}\label{fl_def}
(-\Delta)^su(x):=C_{N,s}\;\mbox{P.V.}\int_{\RR^N}\frac{u(x)-u(y)}{|x-y|^{N+2s}}dy=\lim_{\varepsilon\downarrow 0}(-\Delta)_\varepsilon^s u(x),\;x\in\RR^N,
\end{align}
provided that the limit exists. 
We notice that $\mathcal L_s^{1}(\RR^N)$ is the right space for which $ v:=(-\Delta)_\varepsilon^s u$ exists for every $\varepsilon>0$, $v$ being also continuous at the continuity points of  $u$.  The fractional Laplacian can be also defined as the pseudo-differential operator with symbol $|\xi|^{2s}$.
For more details on the fractional Laplace operator we refer to \cite{BBC,Caf1,NPV,GW,GW-CPDE,GW2,War,War-In} and their references.


Next, we consider the following Dirichlet problem:
\begin{equation}\label{EDP}
\begin{cases}
(-\Delta)^su=0\;\;&\mbox{ in }\;\Omega,\\
u=g&\mbox{ in }\;\RR^N\setminus\Omega.
\end{cases}
\end{equation}

\begin{definition}\label{def-sol}
Let $g\in W^{s,2}(\Omc)$ and  $\widetilde g\in W^{s,2}(\RR^N)$ be such that $\widetilde g|_{\Omc}=g$. A  $u\in W^{s,2}(\RR^N)$ is said to be a weak solution of \eqref{EDP} if $u-\widetilde g\in W_0^{s,2}(\bOm)$ and 
\begin{align*}
\int_{\RR^N}\int_{\RR^N}\frac{(u(x)-u(y))(v(x)-v(y))}{|x-y|^{N+2s}}\;dxdy=0,\;\;\forall\; v\in W_0^{s,2}(\bOm).
\end{align*}
\end{definition}

The following existence result is taken from \cite{Grub} (see also \cite{GSU}).

\begin{proposition}\label{proposi-33}
For every $g\in W^{s,2}(\RR^N\setminus\Omega)$, there is a unique $u\in W^{s,2}(\RR^N)$ satisfying \eqref{EDP} in the sense of Definition \ref{def-sol}. In addition, there is a constant $C>0$ such that
\begin{align}\label{E-Neu}
\|u\|_{W^{s,2}(\RR^N)}\le C\|g\|_{W^{s,2}(\RR^N\setminus\Omega)}.
\end{align}
\end{proposition}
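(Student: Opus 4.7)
The plan is to reduce the inhomogeneous exterior problem to a homogeneous one and then apply the Lax--Milgram theorem on $W_0^{s,2}(\bOm)$.

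First, given $g\in W^{s,2}(\Omc)$, I would invoke the standard extension result for fractional Sobolev spaces to pick $\widetilde g\in W^{s,2}(\RR^N)$ with $\widetilde g|_{\Omc}=g$ and
$\|\widetilde g\|_{W^{s,2}(\RR^N)}\le C_1\|g\|_{W^{s,2}(\Omc)}$.
Then I would write $u=w+\widetilde g$ and look for $w\in W_0^{s,2}(\bOm)$. Substituting into the weak formulation of Definition \ref{def-sol}, the problem becomes: find $w\in W_0^{s,2}(\bOm)$ such that for every $v\in W_0^{s,2}(\bOm)$,
\begin{equation*}
\mathcal B(w,v):=\int_{\RR^N}\int_{\RR^N}\frac{(w(x)-w(y))(v(x)-v(y))}{|x-y|^{N+2s}}\,dxdy=-\mathcal B(\widetilde g,v).
\end{equation*}

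Next, I would verify the Lax--Milgram hypotheses for $\mathcal B$ on the Hilbert space $W_0^{s,2}(\bOm)$. Continuity is immediate from the Cauchy--Schwarz inequality, as the Gagliardo seminorm bounds $|\mathcal B(w,v)|$ by the product of seminorms; for the right-hand side the same inequality yields $|\mathcal B(\widetilde g,v)|\le C_2\|\widetilde g\|_{W^{s,2}(\RR^N)}\|v\|_{W_0^{s,2}(\bOm)}$, so the data define a bounded linear functional. Coercivity is the key ingredient: since elements of $W_0^{s,2}(\bOm)$ vanish outside $\Omega$, the fractional Poincaré inequality on the bounded set $\Omega$ implies $\mathcal B(v,v)\ge c_0\|v\|_{W_0^{s,2}(\bOm)}^2$. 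Hence Lax--Milgram produces a unique $w\in W_0^{s,2}(\bOm)$, and $u:=w+\widetilde g\in W^{s,2}(\RR^N)$ is the desired solution.

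Uniqueness of $u$ follows because the difference of two solutions lies in $W_0^{s,2}(\bOm)$ and is a solution with zero data, so coercivity forces it to vanish. For the quantitative estimate \eqref{E-Neu}, Lax--Milgram gives $\|w\|_{W_0^{s,2}(\bOm)}\le c_0^{-1}C_2\|\widetilde g\|_{W^{s,2}(\RR^N)}$, whence
\begin{equation*}
\|u\|_{W^{s,2}(\RR^N)}\le \|w\|_{W^{s,2}(\RR^N)}+\|\widetilde g\|_{W^{s,2}(\RR^N)}\le C_3\|\widetilde g\|_{W^{s,2}(\RR^N)}\le C_3C_1\|g\|_{W^{s,2}(\Omc)}.
\end{equation*}
The main technical obstacle is really only the verification of coercivity, i.e.\ the fractional Poincaré inequality on $W_0^{s,2}(\bOm)$, but this is classical for bounded domains and freely available in the references \cite{NPV,War}; everything else is a routine Lax--Milgram argument combined with the extension theorem.
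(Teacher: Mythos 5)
Your argument is correct, but it is worth noting that the paper does not prove Proposition \ref{proposi-33} at all: it simply imports the statement from \cite{Grub} (see also \cite{GSU}). Your Lax--Milgram proof is essentially the standard variational argument that underlies those references, so in effect you have supplied a self-contained, elementary proof where the paper relies on a citation; the cited works obtain the result (and more, e.g.\ finer regularity and more general data) by heavier machinery. Two small points you should make explicit if you write this up. First, the extension step uses that $\Omc$ is a $W^{s,2}$-extension domain, which holds here because $\partial\Omega$ is Lipschitz; this is where the boundedness constant $C_1$ comes from and it is what makes \eqref{E-Neu} independent of the particular $\widetilde g$. Second, in the uniqueness step Definition \ref{def-sol} allows the two solutions to be built from different extensions $\widetilde g_1,\widetilde g_2$ of the same $g$; since $\widetilde g_1-\widetilde g_2$ vanishes on $\Omc$ it belongs to $W_0^{s,2}(\bOm)$, so the difference of the two solutions is indeed in $W_0^{s,2}(\bOm)$ and your coercivity argument applies. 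With those remarks your proof is complete, and the coercivity you single out is, as you say, just the fractional Poincar\'e inequality for functions supported in the bounded set $\overline{\Omega}$, available in \cite{NPV}.
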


Now, we consider the realization of $(-\Delta)^s$ in $L^2(\Omega)$ with the condition $u=0$ in $\Omc$. More precisely, we consider the closed and bilinear form
\begin{align*}
\mathcal F(u,v):=\frac{C_{N,s}}{2}\int_{\RR^N}\int_{\RR^N}\frac{(u(x)-u(y))(v(x)-v(y))}{|x-y|^{N+2s}}\;dxdy,\;\;u,v\in W_0^{s,2}(\bOm).
\end{align*}
Let $(-\Delta)_D^s$ be the selfadjoint operator in $L^2(\Omega)$ associated with $\mathcal F$ in the sense that
\begin{equation*}
\begin{cases}
D((-\Delta)_D^s)=\Big\{u\in W_0^{s,2}(\bOm),\;\exists\;f\in L^2(\Omega),\;\mathcal F(u,v)=(f,v)_{L^2(\Omega)}\;\forall\;v\in W_0^{s,2}(\bOm)\Big\},\\
(-\Delta)_D^su=f.
\end{cases}
\end{equation*}
More precisely, we have that
\begin{equation}\label{DLO}
D((-\Delta)_D^s):=\left\{u\in W_0^{s,2}(\bOm),\; (-\Delta)^su\in L^2(\Omega)\right\},\;\;\;
(-\Delta)_D^su:=(-\Delta)^su.
\end{equation}
Then $(-\Delta)_D^s$ is the realization of $(-\Delta)^s$ in $L^2(\Omega)$ with the condition $u=0$ in $\Omc$. It is well-known (see e.g. \cite{Val,War-ACE}) that $(-\Delta)_D^s$ has a compact resolvent and its eigenvalues form a non-decreasing sequence of real numbers $0<\lambda_1\le\lambda_2\le\cdots\le\lambda_n\le\cdots$ satisfying $\lim_{n\to\infty}\lambda_n=\infty$.  In addition, the eigenvalues are of finite multiplicity.
Let $(\varphi_n)_{n\in\NN}$ be the orthonormal basis of eigenfunctions associated with  $(\lambda_n)_{n\in\NN}$. Then $\varphi_n\in D((-\Delta)_D^s)$ for every $n\in\NN$,  $(\varphi_n)_{n\in\NN}$ is total in $L^2(\Omega)$ and satisfies 
\begin{equation}\label{ei-val-pro}
\begin{cases}
(-\Delta)^s\varphi_n=\lambda_n\varphi_n\;\;&\mbox{ in }\;\Omega,\\
\varphi_n=0\;&\mbox{ in }\;\Omc.
\end{cases}
\end{equation}
With this setting, we have that for $\gamma\ge 0$, we can define the $\gamma$-powers of $(\Delta)_D^s$ as follows:
\begin{align}\label{FP}
\begin{cases}
D([(-\Delta)_D^s]^\gamma):=\left\{u\in L^2(\Omega):\; \sum_{n=1}^\infty\left|\lambda_n^\gamma(u,\varphi_n)_{L^2(\Omega)}\right|^2<\infty\right\},\vspace*{0.1cm}\\
[(-\Delta)_D^s]^\gamma u:=\sum_{n=1}^\infty\lambda_n^\gamma(u,\varphi_n)_{L^2(\Omega)}.
\end{cases}
\end{align}
Using  \eqref{FP}, we can easily show that $D([(-\Delta)_D^s]^{\frac 12})=W_0^{s,2}(\bOm)$ and for $u\in W_0^{s,2}(\bOm)$ we have that
\begin{align}\label{norm-V}
\|u\|_{W_0^{s,2}(\bOm)}^2=\sum_{n=1}^\infty\left|\lambda_n^{\frac 12}\left(u,\varphi_n\right)_{L^2(\Om)}\right|^2,
\end{align}
defines an equivalent norm on $W_0^{s,2}(\bOm)$. If $u\in D((-\Delta)_D^s)$, then
\begin{align*}
\|u\|_{D((-\Delta)_D^s)}^2=\|(-\Delta)_D^su\|_{L^2(\Omega)}^2=\sum_{n=1}^\infty\left|\lambda_n\left(u,\varphi_n\right)_{L^2(\Om)}\right|^2.
\end{align*}
In addition, for $u\in W^{-s,2}(\bOm)$, we have that
\begin{align}\label{norm-V-2}
\|u\|_{W^{-s,2}(\bOm)}^2=\sum_{n=1}^\infty\left|\lambda_n^{-\frac 12}\left(u,\varphi_n\right)_{L^2(\Om)}\right|^2.
\end{align}  
In that case, using the so called Gelfand triple (see e.g. \cite{ATW}) we have the following continuous embeddings $W_0^{s,2}(\bOm)\hookrightarrow L^2(\Omega)\hookrightarrow W^{-s,2}(\bOm)$.

Next, for $u\in W^{s,2}(\RR^N)$ we introduce the {\em nonlocal normal derivative $\mathcal N_s$} given by 
\begin{align}\label{NLND}
\mathcal N_su(x):=C_{N,s}\int_{\Omega}\frac{u(x)-u(y)}{|x-y|^{N+2s}}\;dy,\;\;\;x\in\RR^N\setminus\bOm,
\end{align}
where $C_{N,s}$ is the constant given in \eqref{CNs}.
By \cite[Lemma 3.2]{GSU},  for every $u\in W^{s,2}(\RR^N)$, we have that $\mathcal N_su\in L^2(\Omc)$.

The following unique continuation property which shall play an important role in the proof of our main results has been recently obtained in \cite[Theorem 3.10]{War-ACE}.

\begin{lemma}\label{lem-UCD}
Let $\lambda>0$ be a real number  and $\mathcal O\subset\Omb$ a non-empty open set. 
If $\varphi\in D((-\Delta)_D^s)$ satisfies
\begin{equation*}
(-\Delta)_D^s\varphi=\lambda\varphi\;\mbox{ in }\;\Omega\;\mbox{ and }\; \mathcal N_s\varphi=0\;\mbox{ in }\;\mathcal O,
\end{equation*}
then $\varphi=0$ in $\RR^N$. 
\end{lemma}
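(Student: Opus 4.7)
The strategy is to reduce the eigenvalue condition on $\Omega$ together with the vanishing of $\mathcal{N}_s\varphi$ on $\mathcal{O}$ to the assertion that both $\varphi$ and $(-\Delta)^s\varphi$ vanish simultaneously on the open set $\mathcal{O}\subset\Omb$, and then to invoke the unique continuation principle for the fractional Laplacian on $\RR^N$.

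First I would observe that since $\varphi\in D((-\Delta)_D^s)\subset W_0^{s,2}(\bOm)$, the trivial extension ensures $\varphi\equiv 0$ on $\RR^N\setminus\Omega$, and in particular $\varphi\equiv 0$ on $\mathcal{O}$. Next, fix $x\in\mathcal{O}$; since $\mathcal{O}$ is open and contained in $\Omb$, some ball $B_r(x)$ lies in $\mathcal{O}$ and $\varphi$ vanishes on it, so the principal value in \eqref{fl_def} reduces to an ordinary Lebesgue integral. Using $\varphi(x)=0$ together with $\varphi\equiv 0$ on $\Omc$, one computes
\begin{align*}
(-\Delta)^s\varphi(x)=C_{N,s}\int_{\RR^N}\frac{-\varphi(y)}{|x-y|^{N+2s}}\,dy=-C_{N,s}\int_{\Omega}\frac{\varphi(y)}{|x-y|^{N+2s}}\,dy=\mathcal{N}_s\varphi(x),
\end{align*}
the last equality following directly from \eqref{NLND} after inserting $\varphi(x)=0$. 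Hence the hypothesis $\mathcal{N}_s\varphi=0$ on $\mathcal{O}$ translates into $(-\Delta)^s\varphi=0$ on $\mathcal{O}$ (note the eigenvalue equation $(-\Delta)^s\varphi=\lambda\varphi$ does not contradict this, since $\mathcal{O}$ lies outside $\Omega$).

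At this point I have produced a function $\varphi\in W^{s,2}(\RR^N)$ such that $\varphi=0$ and $(-\Delta)^s\varphi=0$ simultaneously on the non-empty open set $\mathcal{O}$. The conclusion $\varphi\equiv 0$ in $\RR^N$ then follows from the (now classical) strong unique continuation principle for the fractional Laplacian: any $u\in W^{s,2}(\RR^N)$ whose values and fractional Laplacian vanish on a non-empty open set must be identically zero. The standard argument passes through the Caffarelli--Silvestre extension $U$ of $\varphi$ to $\RR^{N+1}_+$, which solves the degenerate elliptic equation $\operatorname{div}(y^{1-2s}\nabla U)=0$ with trace $\varphi$ and weighted conormal derivative a constant multiple of $(-\Delta)^s\varphi$. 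The simultaneous vanishing of trace and conormal derivative on $\mathcal{O}\times\{0\}$ furnishes zero Cauchy data; an even reflection across this piece of the boundary yields a solution of a degenerate elliptic equation in a full neighborhood of $\mathcal{O}\times\{0\}$ that vanishes on an open set, and the Carleman-type unique continuation for the extension operator (due to Fall--Felli and R\"uland) forces $U\equiv 0$ in that neighborhood, hence by connectedness $U\equiv 0$ on $\RR^{N+1}_+$, and finally $\varphi\equiv 0$ on $\RR^N$.

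The substantive obstacle is confined to the last step, namely the unique continuation principle for $(-\Delta)^s$, which depends on sharp Carleman estimates for the degenerate extension operator; granted this tool, the rest of the proof is the purely algebraic observation that the Dirichlet condition $\varphi=0$ on $\Omc$ converts the nonlocal normal derivative $\mathcal{N}_s\varphi$ into the restriction of $(-\Delta)^s\varphi$ on $\mathcal{O}$, thereby placing the problem within the scope of the known unique continuation result.
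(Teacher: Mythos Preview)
Your argument is correct. The paper does not supply its own proof of this lemma; it records the statement as a quotation of \cite[Theorem~3.10]{War-ACE}. The route you sketch---observing that the zero exterior Dirichlet condition forces $\varphi=0$ on $\mathcal{O}$ and turns $\mathcal{N}_s\varphi$ into the restriction of $(-\Delta)^s\varphi$ to $\mathcal{O}$, and then appealing to the strong unique continuation principle for $(-\Delta)^s$ via the Caffarelli--Silvestre extension and the Carleman estimates of Fall--Felli and R\"uland---is exactly the standard argument behind the cited result, so your proposal and the paper's (outsourced) proof coincide in substance.
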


\begin{remark}
{\em The following important identity has been recently proved in \cite[Reamrk 3.11]{War-ACE}.
Let $g\in W^{s,2}(\RR^N\setminus\Omega)$ and $U_g\in W^{s,2}(\RR^N)$ the associated unique weak solution of the Dirichlet problem \eqref{EDP}. Then
\begin{align}\label{eqA9}
\int_{\Omc}g\mathcal N_s\varphi_n\;dx=-\lambda_n\int_{\Omega}\varphi_nU_g\;dx.
\end{align}
}
\end{remark}

For more details on the Dirichlet problem associated with the fractional Laplace operator we refer the interested reader to \cite{BWZ2,BWZ1,BBC,Caf1,Grub,RS2-2,RS-DP,War,War-ACE} and their references.

The following integration by parts formula is contained in \cite[Lemma 3.3]{DRV} for smooth functions. The version given here can be obtained by using a simple density argument (see e.g. \cite{War-ACE}).

\begin{proposition}
 Let $u\in W^{s,2}(\RR^N)$ be such that $(-\Delta)^su\in L^2(\Omega)$. Then for every $v\in W^{s,2}(\RR^N)$, we have
\begin{align}\label{Int-Part}
\frac{C_{N,s}}{2}\int\int_{\RR^{2N}\setminus(\Omc)^2}&\frac{(u(x)-u(y))(v(x)-v(y))}{|x-y|^{N+2s}}\;dxdy\notag\\
=&\int_{\Omega}v(-\Delta)^su\;dx+\int_{\Omc}v\mathcal N_su\;dx.
\end{align}
\end{proposition}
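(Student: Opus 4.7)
My plan is to derive the identity for general $u\in W^{s,2}(\RR^N)$ with $(-\Delta)^s u\in L^2(\Omega)$ and $v\in W^{s,2}(\RR^N)$ by a density argument starting from the smooth version in \cite[Lemma 3.3]{DRV}, which is valid for $u,v\in C_c^\infty(\RR^N)$. The strategy is to select sequences $u_k,v_k\in C_c^\infty(\RR^N)$ that approximate $u$ and $v$ well enough that every term in the identity passes to the limit.

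Each term of the identity is continuous in an appropriate topology. The left-hand side is a symmetric bilinear form bounded, via Cauchy--Schwarz and the inclusion $\RR^{2N}\setminus(\Omc)^2\subset\RR^{2N}$, by the product of the full Gagliardo seminorms of $u$ and $v$ on $\RR^N$; hence it is continuous on $W^{s,2}(\RR^N)\times W^{s,2}(\RR^N)$. The exterior integral $\int_\Omc v\,\mathcal N_s u\,dx$ is continuous in the same topology because of the estimate $\|\mathcal N_s u\|_{L^2(\Omc)}\le C\|u\|_{W^{s,2}(\RR^N)}$ from \cite[Lemma 3.2]{GSU}. The interior integral $\int_\Omega v(-\Delta)^s u\,dx$ is bilinear and continuous on $L^2(\Omega)\times L^2(\Omega)$, so it suffices to have $(-\Delta)^s u_k\to(-\Delta)^s u$ in $L^2(\Omega)$ together with $v_k\to v$ in $L^2(\Omega)$ (which follows from $W^{s,2}(\RR^N)$ convergence since $\Omega$ has finite measure).

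The main obstacle is therefore the construction of approximants $u_k\in C_c^\infty(\RR^N)$ satisfying both $u_k\to u$ in $W^{s,2}(\RR^N)$ and $(-\Delta)^s u_k\to(-\Delta)^s u$ in $L^2(\Omega)$. I would build them by truncation and mollification: set $u_{R,\epsilon}:=\eta_R(u*\rho_\epsilon)$, where $\rho_\epsilon$ is a standard Friedrichs mollifier and $\eta_R\in C_c^\infty(\RR^N)$ is a cutoff equal to $1$ on $B_R$ and supported in $B_{2R}$. Then $u_{R,\epsilon}\in C_c^\infty(\RR^N)$ and $u_{R,\epsilon}\to u$ in $W^{s,2}(\RR^N)$ along a suitable diagonal sequence. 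For the restriction to $\Omega$, one uses the distributional identity $(-\Delta)^s(u*\rho_\epsilon)=((-\Delta)^s u)*\rho_\epsilon$ to obtain $L^2(\Omega)$ convergence of the mollified part, while the cutoff contribution $(-\Delta)^s\bigl((\eta_R-1)(u*\rho_\epsilon)\bigr)$ is controlled on $\Omega$ by a tail estimate: for $R$ so large that $\dist(\Omega,\RR^N\setminus B_R)\ge R/2$, Cauchy--Schwarz in $y$ combined with the integrability of $|x-y|^{-2(N+2s)}$ on $\{|x-y|\ge R/2\}$ and the fact that $(\eta_R-1)(u*\rho_\epsilon)\to 0$ in $L^2(\RR^N)$ forces this term to vanish in $L^2(\Omega)$. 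Extracting a diagonal sequence $u_k$ and choosing $v_k\to v$ in $W^{s,2}(\RR^N)$, the smooth identity for $(u_k,v_k)$ then passes to the limit termwise, yielding the identity in the stated generality.
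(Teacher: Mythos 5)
Your overall strategy (take the smooth identity of \cite[Lemma 3.3]{DRV} and pass to the limit by density) is the same one the paper invokes, and your continuity claims for the left-hand side and for the exterior term (via the bound $\|\mathcal N_su\|_{L^2(\Omc)}\le C\|u\|_{W^{s,2}(\RR^N)}$) are sound. The genuine gap is in the approximation of $u$: you assert that $((-\Delta)^su)\ast\rho_\epsilon\to(-\Delta)^su$ in $L^2(\Omega)$, but the hypothesis only says that the distribution $(-\Delta)^su\in W^{-s,2}(\RR^N)$ restricts to an $L^2$ function \emph{inside} $\Omega$; at a point $x\in\Omega$ with $\dist(x,\partial\Omega)<\epsilon$ the mollification sees the distribution across $\partial\Omega$, where nothing is assumed and where it can be genuinely singular. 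Concretely, for $\tfrac12<s<1$ let $\mu$ be the surface measure of the Lipschitz boundary $\partial\Omega$ (which lies in $W^{-s,2}(\RR^N)$ by the trace theorem) and set $u:=((-\Delta)^s+I)^{-1}\mu\in W^{s,2}(\RR^N)$. Then $(-\Delta)^su=\mu-u$, whose restriction to the open set $\Omega$ is $-u|_{\Omega}\in L^2(\Omega)$, so this $u$ satisfies the hypotheses of the proposition; yet $((-\Delta)^su)\ast\rho_\epsilon=\mu\ast\rho_\epsilon-u\ast\rho_\epsilon$ and $\|\mu\ast\rho_\epsilon\|_{L^2(\Omega)}$ blows up like $\epsilon^{-1/2}$, so your ``mollified part'' does not converge in $L^2(\Omega)$. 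Your tail estimate only controls the cutoff far from $\Omega$ and does not touch this boundary-concentration phenomenon, so the step fails, and with it the convergence of the interior term $\int_\Omega v_k(-\Delta)^su_k\,dx$.

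The difficulty is intrinsic to approximating $u$ with a fixed-scale mollifier: the hypothesis on $(-\Delta)^su$ is purely interior, so any scheme that needs $L^2$ control of $(-\Delta)^su$ on a neighborhood of $\bOm$ proves the proposition only under that stronger hypothesis. A cleaner route is to keep the given (non-smooth) $u$ fixed and use density only in $v$: for $u$ as in the statement, each of the three terms in \eqref{Int-Part} is continuous in $v$ with respect to the $W^{s,2}(\RR^N)$ norm (the form on the left by Cauchy--Schwarz on $\RR^{2N}\setminus(\Omc)^2\subset\RR^{2N}$, the interior term by $\|v\|_{L^2(\Omega)}\|(-\Delta)^su\|_{L^2(\Omega)}$, the exterior term by $\|v\|_{L^2(\Omc)}\|\mathcal N_su\|_{L^2(\Omc)}$), so it suffices to prove the identity for the given $u$ and $v\in\mathcal D(\RR^N)$; that step, however, cannot be obtained by citing \cite{DRV} for smooth pairs and must be argued directly (splitting the region of integration, Fubini, and the pointwise/principal-value definitions of $(-\Delta)^s$ and $\mathcal N_s$, as in \cite{War-ACE}). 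As written, your proposal leaves precisely that case unproved.
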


We conclude this section with the following observation.

\begin{remark}
{\em We mention the following facts.
\begin{enumerate}
\item Firstly, we notice that 
\begin{align*}
\RR^{2N}\setminus(\RR^N\setminus\Omega)^2=(\Omega\times\Omega)\cup (\Omega\times(\RR^N\setminus\Omega))\cup((\RR^N\setminus\Omega)\times\Omega).
\end{align*}

\item Secondly, if $u=0$ in $\Omc$ or $v=0$ in $\Omc$, then
\begin{align*}
\int\int_{\RR^{2N}\setminus(\RR^N\setminus\Omega)^2}\frac{(u(x)-u(y))(v(x)-v(y))}{|x-y|^{N+2s}}dxdy=\int_{\RR^N}\int_{\RR^N}\frac{(u(x)-u(y))(v(x)-v(y))}{|x-y|^{N+2s}}dxdy,
\end{align*}
so that for such functions, the identity \eqref{Int-Part} becomes
\begin{align}\label{Int-Part-2}
\frac{C_{N,s}}{2}\int_{\RR^N}\int_{\RR^N}\frac{(u(x)-u(y))(v(x)-v(y))}{|x-y|^{N+2s}}\;dxdy
=\int_{\Omega}v(-\Delta)^su\;dx+\int_{\Omc}v\mathcal N_su\;dx.
\end{align}
\end{enumerate}
}
\end{remark}

%
%
%

\section{Series representation of solutions}\label{sec-4}

In this section we prove a representation in terms of series of weak solutions to the system \eqref{main-EQ-2} and the dual system \eqref{ACP-Dual}. Evolution equations with non-homogeneous boundary or exterior conditions are in general not so easy to solve since one cannot apply directly semigroup methods due the fact that the associated operator is in general not a generator of a semigroup. For this reason, we shall give more details in the proofs.
The representation of solutions in term of series shall play an important role in the proofs of our main results. 

Throughout the remainder of the article, without any mention, we shall denote by $(\varphi_n)_{n\in\NN}$ the orthornormal basis of eigenfunctions of the operator $(-\Delta)_D^s$ associated with the eigenvalues $(\lambda_n)_{n\in\NN}$. 

We also recall that $b>0$ and $\alpha-\frac{c^2}{b}>0$,

\subsection{Series solutions of the system \eqref{main-EQ-2}}

We have shown in Section \ref{sec-2} that a solution $(u,u_t,u_{tt})$ of \eqref{SD-WE} can be written as $u=v+w$ where $(v,v_t,v_{tt})$ solves \eqref{main-EQ-2} and $(w,w_t,w_{tt})$ is a solution of \eqref{main-EQ-3}.
 
Consider the system \eqref{main-EQ-3} which is equivalent to the following system:
\begin{equation}\label{main-EQ-3-2}
\begin{cases}
w_{ttt}+\alpha w_{tt}+c^2(-\Delta)_D^sw+b(-\Delta)_D^sw_t=0\;\;&\mbox{ in }\; \Omega\times (0,T),\\
w(\cdot,0)=u_0,\;\;w_t(\cdot,0)=u_1,\;\; w_{tt}(\cdot,0)=u_2&\mbox{ in }\;\Omega.
\end{cases}
\end{equation} 
Let 
\begin{equation*}
W=\left(
\begin{array}{c}
w\\w_t \\w_{tt}\\ 
\end{array}
\right)\;\;\mbox{ and }\;   W_0=\left(
\begin{array}{c}
u_0\\u_1 \\ u_2\\
\end{array}
\right).
\end{equation*}
Then \eqref{main-EQ-3-2} can be rewritten as the following first order Cauchy problem:
\begin{equation}\label{equa27}
\begin{cases}
W_{t}+\mathcal A W=0\;\;&\mbox{ in }\; \Omega\times (0,T),\\
W(\cdot,0)=W_0&\mbox{ in }\;\Omega,
\end{cases}
\end{equation}
where the operator matrix $\mathcal A$ with domain $D(\mathcal A)=D((-\Delta)_D^s)\times D((-\Delta)_D^s)\times L^2(\Omega)$ is given by 
\begin{equation}\label{eq-42}
\mathcal A:=\left(
\begin{array}{ccc}
0& -I & 0\\
0 & 0 & -I\\
c^2(-\Delta)_D^s& b(-\Delta)_D^s & \alpha I
\end{array}
\right).
\end{equation}

Let $D([(-\Delta)_D^s]^{\frac 12})$ be the space defined in \eqref{FP} and let
\begin{align*}
\mathcal H:=D([(-\Delta)_D^s]^{\frac 12})\times D([(-\Delta)_D^s]^{\frac 12})\times L^2(\Om)=W_0^{s,2}(\bOm)\times W_0^{s,2}(\bOm)\times L^2(\Omega)
\end{align*}
be endowed with the graph norm
\begin{align*}
\|U_0\|_{\mathcal H}^2=\|[(-\Delta)_D^2]^{\frac 12}u_0\|_{L^2(\Omega)}^2+\|[(-\Delta)_D^2]^{\frac 12}u_1\|_{L^2(\Omega)}^2+\|u_2\|_{L^2(\Omega)}^2.
\end{align*}

Noticing that the operator $(\Delta)_D^s$ enters in the framework of \cite{KLM}, we have the following result taken from  \cite[Theorem 1.1]{KLM} which has been proved with a generic operator $A$.

\begin{theorem}
The operator $-\mathcal A$ generates a strongly continuous group in $\mathcal H$. As a consequence, for every $W_0:=(u_0,u_1,u_2)\in\mathcal H$, the system  \eqref{equa27}, hence, the system \eqref{main-EQ-3-2}, has a unique strong solution $W$ given by $W(t)=e^{-tA}W_0$, where $(e^{-tA})_{t\ge 0}$ is the strongly continuous semigroup generated by the operator $-A$.
\end{theorem}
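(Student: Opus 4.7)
The plan is to apply the abstract result \cite[Theorem 1.1]{KLM} verbatim. That reference treats the evolution $u_{ttt}+\alpha u_{tt}+c^2 A u+b A u_t=0$ for an arbitrary positive self-adjoint operator $A$ with dense domain on a Hilbert space $H$, and produces a $C_0$-group on $D(A^{1/2})\times D(A^{1/2})\times H$ precisely under the hypotheses $b>0$ and $\alpha-c^2/b>0$. The only work is to verify that the present setting fits this framework.

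First I would check the spatial operator: by the construction in Section \ref{preli}, $A:=(-\Delta)_D^s$ is self-adjoint on $L^2(\Omega)$ with dense domain and compact resolvent, and its spectrum is the strictly positive sequence $0<\lambda_1\le\lambda_2\le\cdots\to\infty$, so $A$ is strictly positive. The square root $A^{1/2}=[(-\Delta)_D^s]^{1/2}$ is defined via \eqref{FP}, and by the identification $D(A^{1/2})=W_0^{s,2}(\overline\Omega)$ recalled just below \eqref{FP}, the abstract energy space of \cite{KLM} reduces exactly to the space $\mathcal H=W_0^{s,2}(\overline\Omega)\times W_0^{s,2}(\overline\Omega)\times L^2(\Omega)$ introduced here. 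The scalar hypotheses $b>0$ and $\alpha-c^2/b>0$ are our standing assumptions, so all conditions of \cite[Theorem 1.1]{KLM} hold, and the theorem yields a $C_0$-group $(e^{-t\mathcal A})_{t\in\RR}$ on $\mathcal H$.

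For intuition, the mechanism behind \cite[Theorem 1.1]{KLM} is to equip $\mathcal H$ with an equivalent inner product adapted to the substitution $z=w_t+(c^2/b)w$. Under that inner product, the computation of $\langle -\mathcal A W,W\rangle_{\mathcal H}$ for $W\in D(\mathcal A)$ collapses to a non-positive expression whose principal surviving term is $-\beta\|A^{1/2}w_t\|_{L^2(\Omega)}^2$ with $\beta=\alpha-c^2/b$. The positivity $\beta>0$ is what renders $-\mathcal A$ (and, by the symmetric computation, also $\mathcal A$) dissipative, while $b>0$ is needed for coercivity of the equivalent norm. The range condition $(\lambda I+\mathcal A)D(\mathcal A)=\mathcal H$ for some $\lambda>0$ reduces to the invertibility of $(-\Delta)_D^s+\mu I$ on $L^2(\Omega)$, which is immediate from the spectral decomposition in the basis $(\varphi_n)_{n\in\NN}$. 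Lumer--Phillips then produces semigroups in both time directions, i.e.\ a group.

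The main obstacle is the algebra of the equivalent inner product and the dissipativity identity, which must be arranged so that cross terms cancel exactly at the threshold $\alpha-c^2/b>0$; this bookkeeping is carried out in detail in \cite{KLM} and need not be reproduced here. Once the group is in hand, the stated consequence is standard semigroup theory: for $W_0\in\mathcal H$ the orbit $W(t):=e^{-t\mathcal A}W_0$ is the unique (strong) solution of \eqref{equa27}, uniqueness following from the injectivity of $e^{-t\mathcal A}$; passing back through the correspondence $W=(w,w_t,w_{tt})$ gives the unique solution of \eqref{main-EQ-3-2}.
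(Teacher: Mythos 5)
Your proposal is correct and follows the same route as the paper: both simply verify that $(-\Delta)_D^s$ is a densely defined, strictly positive self-adjoint operator with $D([(-\Delta)_D^s]^{1/2})=W_0^{s,2}(\bOm)$, so that the abstract result \cite[Theorem 1.1]{KLM} applies verbatim under the standing hypotheses $b>0$ and $\alpha-c^2/b>0$. The extra sketch of the equivalent-inner-product/Lumer--Phillips mechanism is fine but not needed, since the paper likewise delegates that bookkeeping entirely to \cite{KLM}.
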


Knowing that the system is well-posed, we are interested to have an explicit representation of solutions which is crucial for the study of the controllability of the system.

From the work of Marchand, McDevitt and Triggiani \cite{marchand2012abstract}, we have the following result.

\begin{lemma}\label{eigen-values-functions}
Each pair $\{\lambda_{n},\varphi_{n}\}$ of eigenvalues and eigenfunctions of $(-\Delta)_{D}^s$ generates eigenvalues $\{\lambda_{n,j}\}_{n\in\NN}$, $j=1,2,3$, of $\mathcal{A}$ given as the roots of the following cubic equation:
\begin{align}\label{charact-eq}
\lambda_{n,j}^3+\alpha \lambda_{n,j}^2+(\lambda_{n}b)\lambda_{n,j}+\lambda_{n}c^2=0.
\end{align}
Besides, under the condition that $\gamma:=\alpha-\frac{c^2}{b}>0$, one root $\lambda_{n,1}$ is real and the other two $\lambda_{n,2}$ and $\lambda_{n,3}$ are complex conjugates, all with negative real parts.
Moreover, the eigenvalues satisfy the following asymptotic behavior:
\begin{align}\label{asym-eigenvalues}
\begin{cases}
\lim_{n\to\infty}\lambda_{n,1}=-\frac{c^2}{b},\quad \lim_{n\to\infty}\frac{\lambda_{n}}{(\operatorname{Re }\lambda_{n,2})^2}=\frac{1}{b},\quad \operatorname{Re }\lambda_{n,2}\searrow -\frac{\gamma}{2},\vspace*{0.1cm}\\ |\operatorname{Im }\lambda_{n,2}|\sim \sqrt{b\lambda_{n}}\to\infty,\; \text{as }n\to\infty.
\end{cases}
\end{align}
\end{lemma}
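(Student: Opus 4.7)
The natural strategy is separation of variables: seek solutions of the homogeneous evolution equation in \eqref{main-EQ-3-2} of the form $w(x,t)=e^{\lambda t}\varphi_n(x)$. Substituting and using $(-\Delta)_D^s\varphi_n=\lambda_n\varphi_n$ reduces the PDE to the scalar condition
\begin{equation*}
\bigl(\lambda^3+\alpha\lambda^2+b\lambda_n\lambda+c^2\lambda_n\bigr)\varphi_n = 0,
\end{equation*}
which is precisely \eqref{charact-eq}. Equivalently, I would directly verify that the vector $(\varphi_n,-\lambda\varphi_n,\lambda^2\varphi_n)^{\top}$ lies in the kernel of $\mathcal{A}-\lambda I$ exactly when the same cubic holds, so that each eigenvalue $\lambda_n$ of $(-\Delta)_D^s$ lifts to three eigenvalues of $\mathcal{A}$ given by the roots of this cubic.

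Next, analyze the three roots of $P(\lambda):=\lambda^3+\alpha\lambda^2+b\lambda_n\lambda+c^2\lambda_n$. All coefficients are strictly positive under the hypotheses, so Descartes' rule rules out any positive real root. To show that all three roots have negative real part, apply the Routh--Hurwitz criterion for a cubic $\lambda^3+a_2\lambda^2+a_1\lambda+a_0$, which requires $a_2, a_0>0$ and $a_2 a_1>a_0$; here these conditions reduce to $\alpha>0$, $c^2\lambda_n>0$, and $\alpha\,b\lambda_n>c^2\lambda_n$, the last being exactly $\gamma=\alpha-c^2/b>0$. To separate one real root from a complex conjugate pair, I would compute the discriminant of $P$ in the variable $\lambda_n$; its leading term is $-4b^3\lambda_n^3<0$, so for $\lambda_n$ sufficiently large the discriminant is negative, forcing exactly one real root $\lambda_{n,1}$ and a conjugate pair $\lambda_{n,2},\lambda_{n,3}=\overline{\lambda_{n,2}}$.

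For the asymptotic behavior, I would use dominant balance together with Vieta's formulas. Since $P(-c^2/b)=(c^4/b^2)\gamma>0$ and $P\to -\infty$ at $-\infty$, the real root satisfies $\lambda_{n,1}<-c^2/b$; dividing the identity $P(\lambda_{n,1})=0$ by $\lambda_n$ and sending $n\to\infty$ forces $\lambda_{n,1}\to -c^2/b$, with monotonicity of $\lambda_{n,1}$ in $\lambda_n$ obtained from the implicit function theorem applied to $P$. Using the Vieta relation $\lambda_{n,1}+2\operatorname{Re}\lambda_{n,2}=-\alpha$ then gives $\operatorname{Re}\lambda_{n,2}=-(\alpha+\lambda_{n,1})/2\searrow -\gamma/2$. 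Finally, factor $P(\lambda)=(\lambda-\lambda_{n,1})(\lambda^2+p_n\lambda+q_n)$ with $p_n=\alpha+\lambda_{n,1}\to\gamma$ and $q_n=-c^2\lambda_n/\lambda_{n,1}\sim b\lambda_n$; the quadratic formula then yields $|\operatorname{Im}\lambda_{n,2}|=\sqrt{q_n-p_n^2/4}\sim\sqrt{b\lambda_n}$, completing the stated asymptotics.

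The main obstacle I anticipate is the bookkeeping: verifying the negativity of the discriminant and the monotonicity $\operatorname{Re}\lambda_{n,2}\searrow -\gamma/2$ \emph{uniformly} in $n$ (not merely in the limit) is algebraically delicate, but reduces to polynomial inequalities in $\lambda_n,b,c,\alpha$ that can be checked directly from the hypothesis $\gamma>0$.
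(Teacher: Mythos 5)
The paper does not actually prove this lemma: it is imported verbatim from Marchand--McDevitt--Triggiani \cite{marchand2012abstract}, so your self-contained argument (separation of variables to get the cubic, Routh--Hurwitz for the negative real parts, discriminant for the real/complex-pair split, Vieta plus the factorization $P(\lambda)=(\lambda-\lambda_{n,1})(\lambda^2+p_n\lambda+q_n)$ for the asymptotics) is a genuinely independent route, and its core is sound. In particular $P(-c^2/b)=\frac{c^4}{b^2}\gamma>0$, the Hurwitz condition $\alpha\, b\lambda_n>c^2\lambda_n$ being exactly $\gamma>0$, $p_n=\alpha+\lambda_{n,1}\to\gamma$, $q_n=-c^2\lambda_n/\lambda_{n,1}\sim b\lambda_n$, and $|\operatorname{Im}\lambda_{n,2}|=\sqrt{q_n-p_n^2/4}\sim\sqrt{b\lambda_n}$ are all correct, and your factorization in fact establishes the (evidently intended) reading of the second limit in \eqref{asym-eigenvalues} with $\operatorname{Im}$ in place of $\operatorname{Re}$, since $\operatorname{Re}\lambda_{n,2}\to-\gamma/2$ is incompatible with $\lambda_n/(\operatorname{Re}\lambda_{n,2})^2\to 1/b$.

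Three points need attention. First, a sign slip: with the paper's convention $W_t+\mathcal{A}W=0$, the correct companion vector is $(\varphi_n,\lambda\varphi_n,\lambda^2\varphi_n)^{\top}$ and the roots of \eqref{charact-eq} are eigenvalues of $-\mathcal{A}$; your vector $(\varphi_n,-\lambda\varphi_n,\lambda^2\varphi_n)^{\top}$ lies in $\ker(\mathcal{A}-\lambda I)$ only when $\lambda^3-\alpha\lambda^2+b\lambda_n\lambda-c^2\lambda_n=0$, i.e. for the sign-alternated cubic. Second, when you divide $P(\lambda_{n,1})=0$ by $\lambda_n$ and let $n\to\infty$, you must first know $\lambda_{n,1}$ stays bounded; this is available from what you already have ($\lambda_{n,1}=-\alpha-2\operatorname{Re}\lambda_{n,2}\in(-\alpha,-c^2/b)$ by Vieta and Routh--Hurwitz), but it should be said, as otherwise the term $(\lambda_{n,1}^3+\alpha\lambda_{n,1}^2)/\lambda_n$ need not vanish. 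Third, your discriminant argument only yields the one-real/two-complex split for $\lambda_n$ large, not for every $n$; this is not a defect of your proof but of the statement as quoted, since for suitable $\alpha,b,c$ with $\gamma>0$ the cubic can have three real roots for the first eigenvalues (e.g. coefficients matching $(\lambda+1)(\lambda+2)(\lambda+3)$), so the claim and the monotonicity $\operatorname{Re}\lambda_{n,2}\searrow-\gamma/2$ should be read, and can only be proved, for $n$ large — which is all that the controllability arguments in the paper use.
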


Throughout the rest of the article we assume that 
\begin{align*}
\gamma=\alpha-\frac{c^2}{b}>0.
\end{align*}

Next we give the representation of solutions in terms of series.

\begin{proposition}\label{pro-sol-SG}
Let  $(u_0, u_1,u_2)\in W_0^{s,2}(\bOm)\times W_0^{s,2}(\bOm)\times L^2(\Omega)$. Then the unique solution $(w,w_t,w_{tt})$ of the system \eqref{main-EQ-3-2} is given by
\begin{align}\label{eq-9}
w(x,t)=\sum_{n=1}^{\infty}\Big(A_{n}(t)(u_{0},\varphi_{n})_{L^2(\Om)}+B_{n}(t)(u_{1},\varphi_{n})_{L^2(\Om)}+C_{n}(t)(u_{2},\varphi_{n})_{L^2(\Om)}\Big)\varphi_{n}(x),
\end{align}
where
\begin{align}\label{An}
A_{n}(t)=\frac{\lambda_{n,2}\lambda_{n,3}}{\xi_{n,1}}e^{\lambda_{n,1}t}-\frac{\lambda_{n,1}\lambda_{n,3}}{\xi_{n,2}}e^{\lambda_{n,2}t}+\frac{\lambda_{n,1}\lambda_{n,2}}{\xi_{n,3}}e^{\lambda_{n,3}t},
\end{align}
\begin{align}\label{Bn}
B_{n}(t)=-\frac{\lambda_{n,2}+\lambda_{n,3}}{\xi_{n,1}}e^{\lambda_{n,1}t}+\frac{\lambda_{n,1}+\lambda_{n,3}}{\xi_{n,2}}e^{\lambda_{n,2}t}-\frac{\lambda_{n,1}+\lambda_{n,2}}{\xi_{n,3}}e^{\lambda_{n,3}t},
\end{align}
and
\begin{align}\label{Cn}
C_{n}(t)=\frac{1}{\xi_{n,1}}e^{\lambda_{n,1}t}-\frac{1}{\xi_{n,2}}e^{\lambda_{n,2}t}+\frac{1}{\xi_{n,3}}e^{\lambda_{n,3}t}.
\end{align}
Here, $\lambda_{n,j}$ are the solutions of \eqref{charact-eq} and
\begin{align}\label{xi}
\xi_{n,1}&=(\lambda_{n,1}-\lambda_{n,2})(\lambda_{n,1}-\lambda_{n,3}),\quad \xi_{n,2}=(\lambda_{n,1}-\lambda_{n,2})(\lambda_{n,2}-\lambda_{n,3}),\notag\\ \xi_{n,3}&=(\lambda_{n,1}-\lambda_{n,3})(\lambda_{n,2}-\lambda_{n,3}).
\end{align}
\end{proposition}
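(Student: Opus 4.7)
My plan is to carry out a spectral decomposition in the orthonormal basis $(\varphi_n)$ of eigenfunctions of $(-\Delta)_D^s$, thereby reducing the PDE \eqref{main-EQ-3-2} to a countable family of decoupled, explicitly solvable, third-order scalar ODEs.

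First I would write formally $w(x,t)=\sum_n w_n(t)\varphi_n(x)$ with $w_n(t):=(w(\cdot,t),\varphi_n)_{L^2(\Omega)}$; testing \eqref{main-EQ-3-2} against $\varphi_n$ and using $(-\Delta)_D^s\varphi_n=\lambda_n\varphi_n$ shows that each Fourier coefficient must satisfy the constant-coefficient ODE
\begin{equation*}
w_n'''(t)+\alpha w_n''(t)+b\lambda_n w_n'(t)+c^2\lambda_n w_n(t)=0,\qquad w_n^{(k)}(0)=u_{k,n}:=(u_k,\varphi_n)_{L^2(\Omega)},\; k=0,1,2.
\end{equation*}
Its characteristic polynomial coincides exactly with \eqref{charact-eq}, so Lemma \ref{eigen-values-functions} provides three distinct roots $\lambda_{n,1},\lambda_{n,2},\lambda_{n,3}$ (one real and two complex conjugates, all with negative real part), and hence the fundamental system $\{e^{\lambda_{n,j}t}\}_{j=1,2,3}$.

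The core computational step will be to identify $A_n, B_n, C_n$ with the unique solutions of this ODE corresponding to the canonical Cauchy data $(1,0,0),(0,1,0),(0,0,1)$; by linearity this immediately gives $w_n(t)=A_n(t)u_{0,n}+B_n(t)u_{1,n}+C_n(t)u_{2,n}$. Expanding each of $A_n, B_n, C_n$ as a linear combination of the $e^{\lambda_{n,j}t}$ and applying Cramer's rule to the common Vandermonde system, whose determinant is $(\lambda_{n,1}-\lambda_{n,2})(\lambda_{n,1}-\lambda_{n,3})(\lambda_{n,2}-\lambda_{n,3})$, against the three canonical right-hand sides will produce exactly the closed forms \eqref{An}--\eqref{Cn} with the quantities $\xi_{n,1},\xi_{n,2},\xi_{n,3}$ of \eqref{xi}. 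Reassembling in $n$ then yields the claimed formula \eqref{eq-9}.

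Finally, to upgrade this formal series to a bona fide solution and match it with the semigroup solution guaranteed by the theorem quoted just after \eqref{eq-42}, I would invoke the asymptotics \eqref{asym-eigenvalues}. Since the real parts of all $\lambda_{n,j}$ stay bounded on $[0,T]$ while $|\lambda_{n,1}-\lambda_{n,j}|\sim\sqrt{b\lambda_n}$ for $j=2,3$ and $|\lambda_{n,2}-\lambda_{n,3}|\sim 2\sqrt{b\lambda_n}$, the denominators obey $|\xi_{n,j}|\asymp b\lambda_n$; a term-by-term inspection of \eqref{An}--\eqref{Cn} then yields the uniform-in-$t$ bounds $A_n=O(1)$, $B_n=O(\lambda_n^{-1/2})$, $C_n=O(\lambda_n^{-1})$, with analogous controlled growth for the higher derivatives. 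Combined with the Parseval identity \eqref{norm-V}, these bounds deliver convergence of \eqref{eq-9} and of its termwise derivatives in the required function spaces, and uniqueness follows from the semigroup theorem. I expect the main bookkeeping difficulty to lie precisely in this convergence step, since the derivatives $A_n^{(k)}, B_n^{(k)}, C_n^{(k)}$ contain terms of growth $\lambda_n^{k/2}$ that must be balanced carefully against the fractional Sobolev weights of $u_0, u_1, u_2$ appearing in the hypotheses.
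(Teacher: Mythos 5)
Your proposal is correct and follows essentially the same route as the paper: projecting onto the eigenbasis $(\varphi_n)$ reduces \eqref{main-EQ-3-2} to the scalar ODE with characteristic equation \eqref{charact-eq}, whose fundamental solutions for the canonical Cauchy data give exactly \eqref{An}--\eqref{Cn}, and the series is reassembled as in \eqref{eq-9}. Your convergence sketch, based on the asymptotics \eqref{asym-eigenvalues} and bounds of the type $A_n=O(1)$, $B_n=O(\lambda_n^{-1/2})$, $C_n=O(\lambda_n^{-1})$, is consistent with the estimates the paper establishes later in Lemma \ref{acota} and merely fills in a verification the paper leaves to the reader.
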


\begin{proof}
 Using the spectral theorem of selfadjoint operators, we can proceed with the method of separation of variables. That is, we look for a solution $(w,w_t,w_{tt})$ of \eqref{main-EQ-3-2} in the form

\begin{align}\label{3}
w(x,t)=\sum_{n=1}^{\infty}\left( w(\cdot,t),\varphi_{n}\right)_{L^2(\Omega)} \varphi_{n}(x).
\end{align}
For the sake of simplicity we let $w_{n}(t)=\left( w(\cdot,t),\varphi_{n}\right)_{L^2(\Omega)}$. Replacing \eqref{3} in the first equation of \eqref{main-EQ-3-2}, then multiplying both sides with $\varphi_k$ and integrating over $\Omega$, we get that $w_n(t)$ satisfies the following ordinary differential equation (ODE)
\begin{align}\label{4}
w_{n}'''(t)+\alpha w_{n}''(t)+c^2\lambda_{n}w_{n}(t)+b\lambda_{n}w_{n}'(t)=0.
\end{align}
From Lemma \ref{eigen-values-functions} and letting $u_{0,n}=(u_0,\varphi_n)_{L^2(\Om)}$, $u_{1,n}=(u_1,\varphi_n)_{L^2(\Om)}$ and $u_{2,n}=(u_2,\varphi_n)_{L^2(\Om)}$ so that
\begin{align}\label{6}
u_0=\sum_{n=1}^{\infty}u_{0,n}\varphi_{n}, \quad u_1=\sum_{n=1}^{\infty}u_{1,n}\varphi_{n},\quad u_2= \sum_{n=1}^{\infty}u_{2,n}\varphi_{n},
\end{align}
we get
\begin{align}\label{5}
w(x,t)=\sum_{n=1}^{\infty}\sum_{j=1}^3 a_{n,i}e^{\lambda_{n,i}t}\varphi_{n}(x),
\end{align}
where
\begin{align}\label{7}
a_{n,1}&=\frac{\lambda_{n,2}\lambda_{n,3}}{\xi_{n,1}}u_{0,n}-\frac{(\lambda_{n,2}+\lambda_{n,3})}{\xi_{n,1}}u_{1,n}+\frac{1}{\xi_{n,1}}u_{2,n},\\
a_{n,2}&=-\frac{\lambda_{n,1}\lambda_{n,3}}{\xi_{n,2}}u_{0,n}+\frac{(\lambda_{n,1}+\lambda_{n,3})}{\xi_{n,2}}u_{1,n}-\frac{1}{\xi_{n,2}}u_{2,n},\\
a_{n,3}&=\frac{\lambda_{n,1}\lambda_{n,2}}{\xi_{n,3}}u_{0,n}-\frac{(\lambda_{n,1}+\lambda_{n,2})}{\xi_{n,3}}u_{1,n}+\frac{1}{\xi_{n,3}}u_{2,n}.
\end{align}
Therefore, we obtain the following expression of $w$:
\begin{align}\label{9}
&w(x,t)=\sum_{n=1}^{\infty}\left[u_{0,n}\left(\frac{\lambda_{n,2}\lambda_{n,3}}{\xi_{n,1}}e^{\lambda_{n,1}t}-\frac{\lambda_{n,1}\lambda_{n,3}}{\xi_{n,2}}e^{\lambda_{n,2}t}+\frac{\lambda_{n,1}\lambda_{n,2}}{\xi_{n,3}}e^{\lambda_{n,3}t}\right)\right]\varphi_{n}(x)\notag\\
&+\sum_{n=1}^{\infty}\left[u_{1,n}\left(-\frac{(\lambda_{n,2}+\lambda_{n,3})}{\xi_{n,1}}e^{\lambda_{n,1}t}+\frac{(\lambda_{n,1}+\lambda_{n,3})}{\xi_{n,2}}e^{\lambda_{n,2}t}-\frac{(\lambda_{n,1}+\lambda_{n,2})}{\xi_{n,3}}e^{\lambda_{n,3}t}\right)\right]\varphi_{n}(x)\notag\\
&+\sum_{n=1}^{\infty}\left[u_{2,n}\left(\frac{1}{\xi_{n,1}}e^{\lambda_{n,1}t}-\frac{1}{\xi_{n,2}}e^{\lambda_{n,2}t}+\frac{1}{\xi_{n,3}}e^{\lambda_{n,3}t}\right)\right]\varphi_{n}(x).
\end{align}
Letting $A_n(t)$, $B_n(t)$ and $C_{n}(t)$ be given as in \eqref{An}, \eqref{Bn} and \eqref{Cn}, respectively, we obtain that  \eqref{eq-9} follows from  \eqref{9}.  

A tedious but simple calculation gives
\begin{align*}
w(x,0)=\sum_{n=1}^{\infty}\Big(A_{n}(0)u_{0,n}+B_{n}(0)u_{1,n}+C_{n}(0)u_{2,n}\Big)\varphi_{n}(x)=\sum_{n=1}^{\infty}u_{0,n}\varphi_{n}(x)=u_0(x),
\end{align*}
\begin{align*}
w_t(x,0)=\sum_{n=1}^{\infty}\Big(A_{n}'(0)u_{0,n}+B_{n}'(0)u_{1,n}+C_{n}'(0)u_{2,n}\Big)\varphi_{n}(x)=\sum_{n=1}^{\infty}u_{1,n}\varphi_{n}(x)=u_1(x),
\end{align*}
and
\begin{align*}
w_{tt}(x,0)=\sum_{n=1}^{\infty}\Big(A_{n}''(0)u_{0,n}+B_{n}''(0)u_{1,n}+C_{n}''(0)u_{2,n}\Big)\varphi_{n}(x)=\sum_{n=1}^{\infty}u_{2,n}\varphi_{n}(x)=u_2(x).
\end{align*}

It is straightforward to show that $w$ given in \eqref{eq-9} has the regularity \eqref{regu}. Since we are not interested with solutions of \eqref{main-EQ-3-2}, we leave the verification to the interested reader. The proof is finished.
\end{proof}


Next, we consider the non-homogeneous system \eqref{main-EQ-2}.


\begin{theorem}\label{theo-44}
For every $g\in\mathcal D((\Omc)\times (0,T))$, the system \eqref{main-EQ-2} has a unique weak (classical solution) $(v,v_t,v_{tt})$ such that $v\in C^\infty([0,T];W^{s,2}(\RR^N))$ and is given by
\begin{align}\label{eq-22-1}
v(x,t)=\sum_{n=1}^{\infty}\left(\int_0^t \Big( g(\cdot,\tau),\mathcal{N}_{s}\varphi_{n} \Big)_{L^2(\Omc)}\frac{1}{\lambda_{n}}\Big[C_{n}'''(t-\tau)+\alpha C_{n}''(t-\tau)\Big] d\tau\right)\varphi_{n}(x).
\end{align}
Moreover, there is a constant $C>0$ such that for all $t\in [0,T]$ and $m\in\NN_0=\NN\cup\{0\}$, 
\begin{align}\label{ESt-Ind}
\|\partial_t^mv(\cdot,t)\|_{W^{s,2}(\RR^N)}\le C\Big(\|\partial_t^{m+3}g+\alpha\partial_t^{m+2}g\|_{L^\infty((0,T);W^{s,2}(\Omc))}+\|\partial_t^mg(\cdot,t)\|_{W^{s,2}(\Omc)}\Big).
\end{align}
\end{theorem}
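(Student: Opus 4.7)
The plan is to reduce the inhomogeneous exterior-data problem \eqref{main-EQ-2} to a homogeneous-boundary problem with a source by lifting $g$ to an $s$-harmonic extension, then to solve modewise in the eigenbasis $\{\varphi_n\}$ of $(-\Delta)_D^s$, and finally to transfer the time derivatives from $g$ onto the fundamental solutions $C_n$ via integration by parts in order to recover \eqref{eq-22-1}. For each $t\in[0,T]$, let $U_g(\cdot,t)\in W^{s,2}(\RR^N)$ denote the unique weak solution of $(-\Delta)^sU_g=0$ in $\Omega$, $U_g=g(\cdot,t)$ in $\Omc$, given by Proposition~\ref{proposi-33}. Since $g\in\mathcal D(\Omc\times(0,T))$, the linearity of the lifting and the estimate \eqref{E-Neu} yield $\partial_t^kU_g=U_{\partial_t^kg}\in C^\infty([0,T];W^{s,2}(\RR^N))$, with every $\partial_t^kU_g$ vanishing at $t=0$. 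Setting $z:=v-U_g$ and using that $(-\Delta)^sU_g=0$ in $\Omega$, the function $z$ solves the homogeneous-boundary problem
\[
z_{ttt}+\alpha z_{tt}+c^2(-\Delta)_D^sz+b(-\Delta)_D^sz_t=-\partial_t^3U_g-\alpha\partial_t^2U_g\text{ in }\Omega\times(0,T),
\]
with $z=0$ in $\Omc$ and zero initial data.

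Expand $z(x,t)=\sum_nz_n(t)\varphi_n(x)$. Testing against $\varphi_n$ and invoking \eqref{eqA9} reduces the PDE to the scalar ODEs
\[
z_n'''+\alpha z_n''+b\lambda_nz_n'+c^2\lambda_nz_n=\tfrac{1}{\lambda_n}(\partial_t^3g+\alpha\partial_t^2g,\mathcal N_s\varphi_n)_{L^2(\Omc)}=:F_n(t),
\]
with $z_n(0)=z_n'(0)=z_n''(0)=0$. From Proposition~\ref{pro-sol-SG} one reads off that $C_n$ defined in \eqref{Cn} is the fundamental solution of the homogeneous ODE with $C_n(0)=C_n'(0)=0$ and $C_n''(0)=1$, so Duhamel gives $z_n(t)=\int_0^tC_n(t-\tau)F_n(\tau)\,d\tau$. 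Integrating by parts three times in $\tau$ on the $\partial_\tau^3g$-contribution and twice on the $\partial_\tau^2g$-contribution---all boundary terms vanish at $\tau=0$ (because $g$ and its derivatives do) and at $\tau=t$ (because $C_n(0)=C_n'(0)=0$), with the single exception $C_n''(0)g(\cdot,t)=g(\cdot,t)$---produces
\[
\lambda_nz_n(t)=(g(\cdot,t),\mathcal N_s\varphi_n)_{L^2(\Omc)}+\int_0^t\bigl(C_n'''+\alpha C_n''\bigr)(t-\tau)(g(\cdot,\tau),\mathcal N_s\varphi_n)_{L^2(\Omc)}\,d\tau.
\]
Because \eqref{eqA9} also gives $(U_g(\cdot,t),\varphi_n)_{L^2(\Omega)}=-\tfrac{1}{\lambda_n}(g(\cdot,t),\mathcal N_s\varphi_n)_{L^2(\Omc)}$, the boundary term cancels exactly when one forms the $\varphi_n$-coefficient of $v=z+U_g$ in $\Omega$, which yields precisely \eqref{eq-22-1}.

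The key to convergence and to \eqref{ESt-Ind} is that the ODE for $C_n$ gives $\tfrac{1}{\lambda_n}(C_n'''+\alpha C_n'')=-bC_n'-c^2C_n$, so the apparent $\lambda_n^{-1}$ singularity in \eqref{eq-22-1} is absorbed into a bounded quantity. The asymptotics \eqref{asym-eigenvalues} yield $|\xi_{n,j}|\sim\lambda_n$ and $\sup_n\operatorname{Re}\lambda_{n,j}<0$, and hence $|C_n(t)|,|C_n'(t)|\le C\lambda_n^{-1}$ uniformly on $[0,T]$. Since every time derivative of $g$ vanishes at $t=0$, each $\partial_t^mz_n$ also has vanishing initial data, and Duhamel at order $m$ combined with one application of \eqref{eqA9} gives $\partial_t^mz_n(t)=-\int_0^tC_n(t-\tau)(U_{\partial_\tau^{m+3}g+\alpha\partial_\tau^{m+2}g}(\cdot,\tau),\varphi_n)_{L^2(\Omega)}\,d\tau$; combining the equivalent norm \eqref{norm-V}, Cauchy--Schwarz in $\tau$, the $O(\lambda_n^{-1})$ control of $C_n$, and \eqref{E-Neu} yields the $z$-contribution to \eqref{ESt-Ind}, while $\partial_t^mU_g=U_{\partial_t^mg}$ is controlled directly by \eqref{E-Neu}. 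Uniqueness follows because the difference of any two weak solutions solves the homogeneous Cauchy problem \eqref{main-EQ-3-2} with zero initial data. The main obstacle will be the uniform-in-$n$ control of $C_n$ and $C_n'$: the decay $\operatorname{Re}\lambda_{n,j}\searrow-\gamma/2$ is only asymptotic, so one must separate finitely many small-$n$ terms (treated by hand using \eqref{An}--\eqref{Cn}) from the asymptotic regime, where \eqref{asym-eigenvalues} delivers the required $O(\lambda_n^{-1})$ decay of the relevant combinations of exponentials.
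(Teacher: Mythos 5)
Your proposal is correct and follows essentially the same route as the paper's proof: lift $g$ by its $s$-harmonic extension (the paper's $\phi$, your $U_g$), solve the resulting zero-exterior problem with source $-\phi_{ttt}-\alpha\phi_{tt}$ by Duhamel with the kernels $C_n$, integrate by parts in $\tau$ and use \eqref{eqA9} so that the boundary term cancels against the lifting, which yields \eqref{eq-22-1}, and then obtain \eqref{ESt-Ind} from the pre-integration-by-parts Duhamel formula via the uniform bound $|\lambda_n C_n(t)|\le C$ (Lemma \ref{acota}) together with \eqref{E-Neu}, concluding by induction on $m$ and by uniqueness for the homogeneous problem. One small correction: the asymptotics \eqref{asym-eigenvalues} only give $|C_n'(t)|=O(\lambda_n^{-1/2})$ (Lemma \ref{acota} states $|\lambda_n^{1/2}C_n'(t)|\le C$), not $O(\lambda_n^{-1})$ as you assert, but since your actual estimate chain uses only the $O(\lambda_n^{-1})$ control of $C_n$ and boundedness of $bC_n'+c^2C_n$, this overstatement does not affect the argument.
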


\begin{proof}
We proof the theorem in several steps.\\

{\bf Step 1}: Consider the following elliptic Dirichlet exterior problem:

\begin{align}\label{13}
\begin{cases}
 (-\Delta)^{s} \phi =0 & \Omega,\\
\phi=g & \ \Omc.
\end{cases}
\end{align}
We have shown in Proposition \ref{proposi-33} that for every $g\in W^{s,2}(\Omc)$,
there is a unique $\phi\in W^{s,2}(\RR^N)$ solution of \eqref{13}, and there is a constant $C>0$ such that
\begin{align}\label{E-DP-D}
\|\phi\|_{W^{s,2}(\RR^N)}\le C\|g\|_{W^{s,2}(\Omc)}.
\end{align}
Since $g$ depends on $(x,t)$, then $\phi$ also depends on $(x,t)$.  If in \eqref{13} one replaces $g$ by $\partial_t^mg$, $m\in\NN$, then the associated unique solution is given by $\partial_t^m\phi$ for every $m\in\NN_0$. From this, we can deduce that $\phi\in C^\infty([0,T], W^{s,2}(\RR^N))$.

Now let $v$ be a solution of \eqref{main-EQ-2} and set $w:=v-\phi$. Then a simple calculation gives
\begin{align*}
&w_{ttt}+\alpha w_{tt}+c^2(-\Delta)^sw+b(-\Delta)^sw_t\\
=&\;v_{ttt}-\phi_{ttt}+\alpha v_{tt}-\alpha \phi_{tt}+c^2(-\Delta)^sv-c^2(-\Delta)^s\phi+b(-\Delta)^sv_t-b(-\Delta)^s\phi_t\\
=&\;v_{ttt}+\alpha v_{tt}+c^2(-\Delta)^sv+b(-\Delta)^sv_t-\phi_{ttt}-\alpha \phi_{tt}\\
=&-\phi_{ttt}-\alpha\phi_{tt} \;\;\mbox{ in }\;\Omega\times(0,T).
\end{align*}
In addition
\begin{align*}
w=v-\phi=g-g=0\;\mbox{ in }\;(\Omc)\times (0,T),
\end{align*}
and 
\begin{equation*}
\begin{cases}
w(\cdot,0)=v(\cdot,0)-\phi(\cdot,0)=-\phi(\cdot,0)\;\;\;&\mbox{ in }\;\Omega,\\
w_t(\cdot,0)=v_t(\cdot,0)-\phi_t(\cdot,0)=-\phi_t(\cdot,0)\;\;\;\;&\mbox{ in }\;\Omega,\\
w_{tt}(\cdot,0)=v_{tt}(\cdot,0)-\phi_{tt}(\cdot,0)=-\phi_{tt}(\cdot,0)\;\;\;\;&\mbox{ in }\;\Omega.
\end{cases}
\end{equation*}
Since $g\in \mathcal D((\Omc)\times (0,T))$, we have that $\phi(\cdot,0)=\partial_t\phi(\cdot,0)=\partial_{tt}\phi(\cdot,0)=0$ in $\Omega$.  We have shown that a solution $(v,v_t,v_{tt})$ of \eqref{main-EQ-2} can be decomposed as $v=\phi+w$, where  $(w,w_t,w_{tt})$ solves the system
\begin{equation}\label{eq-w-D}
\begin{cases}
w_{ttt}+\alpha w_{tt}+c^2(-\Delta)^sw+b(-\Delta)^sw_t=-\phi_{ttt}-\alpha\phi_{tt}\;\;&\mbox{ in }\;\Omega\times (0,T),\\
w=0  &\mbox{ in }\;(\Omc)\times (0,T),\\
w(\cdot,0)=0,\;w_{t}(\cdot,0)=0,\; w_{tt}(\cdot,0)=0\;\;&\mbox{ in }\;\Omega.
\end{cases}
\end{equation}
We notice that $\phi\in C^\infty([0,T];W^{s,2}(\RR^N))$. \\

{\bf Step 2}: We observe that letting 
\begin{equation*}
W=\left(
\begin{array}{c}
w\\w_t \\ w_{tt}\\
\end{array}
\right)\;\;\mbox{ and }\;   \Phi=\left(
\begin{array}{c}
0\\0\\-\phi_{ttt}-\alpha\phi_{tt}\\ 
\end{array}
\right),
\end{equation*}
then the system \eqref{eq-w-D} can be rewritten as the following first order Cauchy problem
\begin{equation}\label{eq-CP}
\begin{cases}
W_t+\mathcal A W=\Phi\;\;&\mbox{ in }\;\Omega\times(0,T),\\
W(0)=\left(\begin{array}{c}
0\\0\\0\\ 
\end{array}
\right)&\mbox{ in }\;\Omega,
\end{cases}
\end{equation}
where $\mathcal A$ is the matrix operator defined in \eqref{eq-42}. Using \cite[Corollary 1.2]{KLM}, we get that the first order Cauchy problem \eqref{eq-CP} has a unique classical solution $W$ and hence,
\eqref{eq-w-D} has a unique weak (classical) solution $(w,w_t,w_{tt})$ such that $w\in C^\infty([0,T];W^{s,2}(\RR^N))$ and is given by

\begin{align}\label{eq-17}
w(x,t)=-\sum_{n=1}^{\infty}\left(\int_0^t \Big( \phi_{\tau\tau\tau}(\cdot,\tau)+\alpha\phi_{\tau\tau}(\cdot,\tau),\varphi_{n}\Big)_{L^2(\Omega)}C_{n}(t-\tau)d\tau\right)\varphi_{n}(x),
\end{align}
where we recall that $C_n$ is given in \eqref{Cn}.
Integrating  \eqref{eq-17} by parts we get that
\begin{align}\label{eq-18}
w(x,t)=&-\sum_{n=1}^{\infty}\left(\int_0^t ( \phi(\cdot,\tau),\varphi_{n})_{L^2(\Omega)}\Big(C_{n}'''(t-\tau)+\alpha C_{n}''(t-\tau)\Big)d\tau\right)\varphi_{n}(x)\notag\\
&-\sum_{n=1}^{\infty}\left((\phi_{\tau\tau}(\cdot,\tau),\varphi_{n})_{L^2(\Omega)}C_{n}(t-\tau)\Big|_0^t\right)\varphi_{n}(x)-\sum_{n=1}^{\infty}\left((\phi_{\tau}(\cdot,\tau),\varphi_{n})_{L^2(\Omega)}C_{n}'(t-\tau)\Big|_0^t\right)\varphi_{n}(x)\notag\\
&-\sum_{n=1}^{\infty}\left((\phi(\cdot,\tau),\varphi_{n})_{L^2(\Omega)}C_{n}''(t-\tau)\Big|_0^t\right)\varphi_{n}(x)-\alpha\sum_{n=1}^{\infty}\left((\phi_{\tau}(\cdot,\tau),\varphi_{n})_{L^2(\Omega)}C_{n}(t-\tau)\Big|_0^t\right)\varphi_{n}(x)\notag\\
&-\alpha\sum_{n=1}^{\infty}\left((\phi(\cdot,\tau),\varphi_{n})_{L^2(\Omega)}C_{n}'(t-\tau)\Big|_0^t\right)\varphi_{n}(x).
\end{align}
We observe that  $C_{n}(0)=C_{n}'(0)=0$ and $C_{n}''(0)=1$ for all $n\in\NN$. Since $\phi(\cdot,0)=\phi_{t}(\cdot,0)=\phi_{tt}(\cdot,0)=0$, we get that
\begin{align}\label{eq-20}
w(x,t)=-\phi(x,t)-\sum_{n=1}^{\infty}\left(\int_0^t ( \phi(\cdot,\tau),\varphi_{n})_{L^2(\Omega)}\Big(C_{n}'''(t-\tau)+\alpha C_{n}''(t-\tau)\Big)d\tau\right)\varphi_{n}(x).
\end{align}
Using the fact that $\varphi_{n}$ satisfies \eqref{ei-val-pro} and the integration by parts formulas \eqref{Int-Part}-\eqref{Int-Part-2}, we obtain 

\begin{align}\label{eq-21}
\Big(\phi(\cdot,\tau),\lambda_{n}\varphi_{n}\Big)_{L^2(\Omega)}&=\Big( \phi(\cdot,\tau),(-\Delta)^s\varphi_{n}\Big)_{L^2(\Omega)}\nonumber\\
&=\Big((-\Delta)^s \phi(\cdot,\tau),\varphi_{n}\Big)_{L^2(\Omega)}-\int_{\RR^{N}\setminus\Omega}\Big(\phi\mathcal{N}_{s}\varphi_{n}-\varphi_{n}\mathcal{N}_{s}\phi\Big)\;dx\nonumber\\
&=-\int_{\RR^{N}\setminus\Omega}g\mathcal{N}_{s}\varphi_{n}dx.
\end{align}
From \eqref{eq-20} and \eqref{eq-21} we can deduce that
\begin{align}\label{eq-22}
w(x,t)=-\phi(x,t)+\sum_{n=1}^{\infty}\left(\int_0^t \Big( g(\cdot,\tau),\mathcal{N}_{s}\varphi_{n} \Big)_{L^2(\Omc)}\frac{1}{\lambda_{n}}\Big(C_{n}'''(t-\tau)+\alpha C_{n}''(t-\tau)\Big)d\tau\right)\varphi_{n}(x).
\end{align}
We have shown \eqref{eq-22-1}. Since $\phi,w\in C^\infty([0,T];W^{s,2}(\RR^N))$, it follows that $v\in C^\infty([0,T];W^{s,2}(\RR^N))$.\\

{\bf Step 3}: Using \eqref{eq-17} and calculating, we get that for every $t\in [0,T]$,
\begin{align}\label{Int-Es}
\|(-\Delta)_D^sw(\cdot,t)\|_{L^2(\Omega)}=&\left\|\sum_{n=1}^\infty\lambda_n\left(\int_0^t \left( \phi_{\tau\tau\tau}(\cdot,\tau)+\alpha\phi_{\tau\tau}(\cdot,\tau),\varphi_{n}\right)_{L^2(\Omega)}C_{n}(t-\tau)d\tau\right)\varphi_{n}\right\|_{L^2(\Omega)}\notag\\
\le &\int_0^t\left\|\sum_{n=1}^\infty\lambda_n\left( \phi_{\tau\tau\tau}(\cdot,\tau)+\alpha\phi_{\tau\tau}(\cdot,\tau),\varphi_{n}\right)_{L^2(\Omega)}C_{n}(t-\tau)\;d\tau\varphi_{n}\right\|_{L^2(\Omega)}\notag\\
\le&\int_0^t\left(\sum_{n=1}^\infty\Big|\left(\phi_{\tau\tau\tau}(\cdot,\tau)+\alpha\phi_{\tau\tau}(\cdot,\tau),\varphi_{n}\right)_{L^2(\Omega)}\Big|^2\Big|\lambda_nC_{n}(t-\tau)\Big|^2\right)^{\frac 12}\;d\tau.
\end{align}
Using the asymptotic behavior \eqref{asym-eigenvalues}, we obtain (see Lemma \ref{acota} below) that there is a constant $C>0$, independent of $n$, such that
\begin{align}\label{eqB}
\left|\lambda_nC_n(t)\right|\le C,\;\;\forall\;n\in\NN\;\mbox{ and }\; t\in [0,T].
\end{align}
It follows from \eqref{Int-Es}, \eqref{eqB} and \eqref{E-DP-D} that for every $t\in [0,T]$,
\begin{align}\label{Int-Es-2}
\|(-\Delta)_D^sw(\cdot,t)\|_{L^2(\Omega)}\le &C\int_0^t\|\phi_{\tau\tau\tau}(\cdot,\tau)+\alpha\phi_{\tau\tau}(\cdot,\tau)\|_{L^2(\Omega)}\;d\tau\notag\\
\le &C\int_0^t\|\phi_{\tau\tau\tau}(\cdot,\tau)+\alpha\phi_{\tau\tau}(\cdot,\tau)\|_{W^{s,2}(\RR^N)}\;d\tau\notag\\
\le & C\int_0^t\|g_{\tau\tau\tau}(\cdot,\tau)+\alpha g_{\tau\tau}(\cdot,\tau)\|_{W^{s,2}(\Omc)}\;d\tau\notag\\
\le & CT\|g_{ttt}+\alpha g_{tt}\|_{L^\infty((0,T);W^{s,2}(\Omc))}.
\end{align}
Using \eqref{Int-Es-2}, we get that for every $t\in [0,T]$,
\begin{align*}
\|v(\cdot,t)\|_{W^{s,2}(\RR^N)}\le& C\left(\|(-\Delta)_D^sw(\cdot,t)\|_{L^2(\Omega)}+\|\phi(\cdot,t)\|_{W^{s,2}(\RR^N)}\right)\\
\le &C\left(\|g_{ttt}+\alpha g_{tt}\|_{L^\infty((0,T);W^{s,2}(\Omc))}+\|g(\cdot,t)\|_{W^{s,2}(\Omc)}\right).
\end{align*}
We have shown  \eqref{ESt-Ind} for $m=0$.
Proceeding by induction on $m$ we can easily deduce  \eqref{ESt-Ind} for every $m\in\NN_0$. The proof is finished.
\end{proof}

We conclude this subsection with the proof of our main result on existence and uniqueness of weak solutions for the system \eqref{SD-WE}.

\begin{proof}[\bf Proof of Theorem \ref{th:2.3}]
We have shown in Section \ref{sec-2} that a solution $(u,u_t,u_{tt})$ of \eqref{SD-WE} can be decomposed into $u=v+w$ where $(v,v_t,v_{tt})$ solves \eqref{main-EQ-2} and $(w,w_t,w_{tt})$ is a solution of \eqref{main-EQ-3}. Now the result follows from  Proposition \ref{pro-sol-SG} and Theorem \ref{theo-44}.
\end{proof}

\subsection{Series solutions of the dual system}

Now we consider the dual system \eqref{ACP-Dual}. 
Let
\begin{align*}
\psi_{0,n}:=(\psi_0,\varphi_n)_{L^2(\Omega)},\quad \psi_{1,n}:=(\psi_1,\varphi_n)_{L^2(\Omega)},\quad\mbox{and}\quad \psi_{2,n}:=(\psi_2,\varphi_n)_{L^2(\Omega)}.
\end{align*}
Throughout this subsection we will denote
$D_{n}(t)=A_{n}(t)$, $E_{n}(t)=-B_{n}(t)$ and $F_{n}(t)=C_{n}(t)$, where $A_n(t)$, $B_n(t)$ and $C_n(t)$ are given in \eqref{An}, \eqref{Bn} and \eqref{Cn}, respectively. We begin with the following technical Lemma that is crucial in the proof of Theorem \ref{theo-48}.

\begin{lemma}\label{acota}
There is a constant $C>0$ (independent of $n$) such that for every $t\in [0,T]$, 
\begin{align}\label{ES-D}
\max\left\{\left|D_n(t)\right|^2 ,|D_n'(t)|^2, \left|\frac{D_n''(t)}{\lambda_{n}^{\frac 12}}\right|^2,\left|\frac{D_n''(t)}{\lambda_{n}^{\frac 32}}\right|^2\right\}\le C,
\end{align}
\begin{align}\label{ES-E}
\max\left\{\left|E_n(t)\right|^2, |E_n'(t)|^2, \left|\frac{E_n''(t)}{\lambda_{n}^{\frac 12}}\right|^2,\left|\frac{E_n''(t)}{\lambda_{n}^{\frac 32}}\right|^2\right\}\le C,
\end{align}
and
\begin{align}\label{ES-F}
\max\left\{\left|\lambda_{n}^{\frac 12}F_n(t)\right|^2 , \left|\lambda_{n}F_n(t)\right|^2, |\lambda_n^{\frac 12}F_n'(t)|^2, |F_n''(t)|^2, \left|\frac{F_n''(t)}{\lambda_{n}^{\frac 12}}\right|^2\right\}\le C.
\end{align}
\end{lemma}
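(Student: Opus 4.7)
My plan is to read the bounds off directly from the closed-form expressions \eqref{An}--\eqref{Cn} by combining three ingredients: Vieta's formulas for the roots of the characteristic equation \eqref{charact-eq}, the asymptotic behavior of the $\lambda_{n,j}$ from Lemma \ref{eigen-values-functions}, and the fact that $|e^{\lambda_{n,j}t}|\le 1$ for $t\in[0,T]$ since $\operatorname{Re}\lambda_{n,j}<0$. The key asymptotics I will use are $|\lambda_{n,1}|\to c^2/b$, $|\operatorname{Re}\lambda_{n,2}|\to\gamma/2$, and $|\lambda_{n,2}|\sim|\lambda_{n,3}|\sim|\operatorname{Im}\lambda_{n,2}|\sim\sqrt{b\lambda_n}$; from these I immediately deduce $|\xi_{n,1}|\sim b\lambda_n$, $|\xi_{n,2}|\sim|\xi_{n,3}|\sim 2b\lambda_n$. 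From Vieta's applied to \eqref{charact-eq} I also have $\lambda_{n,1}\lambda_{n,2}\lambda_{n,3}=-c^2\lambda_n$, $\lambda_{n,1}\lambda_{n,2}+\lambda_{n,1}\lambda_{n,3}+\lambda_{n,2}\lambda_{n,3}=b\lambda_n$, and $\lambda_{n,1}+\lambda_{n,2}+\lambda_{n,3}=-\alpha$.

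With these in hand, the strategy for each function is to write its $k$-th derivative as $\sum_{j=1}^3\lambda_{n,j}^k\,(\text{coefficient}_j)\,e^{\lambda_{n,j}t}$ and check the order of each coefficient. For $D_n=A_n$, the $j=1$ coefficient $\lambda_{n,2}\lambda_{n,3}/\xi_{n,1}$ is $O(1)$ because numerator and denominator are both of order $\lambda_n$, while the $j=2,3$ coefficients $\lambda_{n,1}\lambda_{n,3}/\xi_{n,2}$ and $\lambda_{n,1}\lambda_{n,2}/\xi_{n,3}$ are $O(\lambda_n^{-1/2})$; multiplying by $\lambda_{n,j}^k$ with $k=0,1$ still gives a uniformly bounded sum (using Vieta to identify $\lambda_{n,1}\lambda_{n,2}\lambda_{n,3}=-c^2\lambda_n$ for the first derivative), and for $k=2$ the terms grow at most like $\lambda_n^{1/2}$, giving \eqref{ES-D}. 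The same bookkeeping, applied to $E_n=-B_n$ using $\lambda_{n,2}+\lambda_{n,3}=O(1)$ and $\lambda_{n,1}+\lambda_{n,j}\sim\sqrt{b\lambda_n}$ for $j=2,3$, produces \eqref{ES-E}. For $F_n=C_n$, all three coefficients $1/\xi_{n,j}$ are $O(\lambda_n^{-1})$, so $|F_n|=O(\lambda_n^{-1})$ (hence both $|\lambda_n^{1/2}F_n|$ and $|\lambda_n F_n|$ are bounded); a derivative multiplies by $\lambda_{n,j}$, so $|F_n'|=O(\lambda_n^{-1/2})$ and $|\lambda_n^{1/2}F_n'|$ is bounded; a second derivative multiplies by $\lambda_{n,j}^2$, giving $|F_n''|=O(1)$ which yields \eqref{ES-F}.

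The only step that requires a little care is justifying that the ``approximate'' asymptotics I quote above actually give uniform (in $n$) bounds rather than merely asymptotic ones. I would handle this by splitting the argument: for all $n$ large enough, say $n\ge n_0$, the ratios $\lambda_{n,j}/\sqrt{b\lambda_n}$ and $\xi_{n,j}/(b\lambda_n)$ lie in a fixed compact set of $\CC\setminus\{0\}$ by \eqref{asym-eigenvalues}, so the desired bounds hold with a constant depending only on the limit values; for the finitely many indices $n<n_0$, the roots $\lambda_{n,j}$ are distinct (since $\xi_{n,j}\neq 0$ for the Proposition \ref{pro-sol-SG} formulas to make sense) and all quantities are continuous functions of $n$, so a trivial maximum argument produces a finite bound on this initial segment. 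Taking the larger of the two constants gives the uniform $C$ in \eqref{ES-D}--\eqref{ES-F}.

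The main obstacle is purely bookkeeping: there are twelve scalar quantities to estimate and each requires knowing the exact power of $\lambda_n$ that appears in both numerator and denominator after using Vieta's formulas. I do not anticipate any genuine difficulty, but I would carry out the computations for $A_n$, $B_n$, $C_n$ in parallel in a single table so that the cancellations coming from Vieta's relations (in particular $\lambda_{n,1}\lambda_{n,2}\lambda_{n,3}=-c^2\lambda_n$, which is crucial for bounding $A_n'$) are visible in one place.
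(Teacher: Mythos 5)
Your proposal is correct and follows essentially the same route as the paper: bound the coefficients $D_{n,j}$, $E_{n,j}$, $F_{n,j}$ of the explicit exponential sums using the eigenvalue asymptotics of Lemma \ref{eigen-values-functions} together with $\left|e^{\lambda_{n,j}t}\right|\le 1$, and read off the stated powers of $\lambda_n$. The only cosmetic differences are that you invoke Vieta's formulas and a large-$n$/small-$n$ splitting for uniformity, whereas the paper shows the relevant coefficient sequences (e.g. $\{|\lambda_{n,j}D_{n,j}|\}_{n}$, $\{|\lambda_nF_{n,j}|\}_{n}$, $\{|\lambda_n^{1/2}/\lambda_{n,2}|\}_{n}$) converge, hence are bounded, and obtains the bounds for $E_n$ from those for $D_n$ via algebraic identities.
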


\begin{proof} 
We rewrite the functions $D_{n}(t)$, $E_{n}(t)$ and $F_{n}(t)$ as follows:
\begin{align}
D_n(t)=\sum_{j=1}^{3}D_{n,j}e^{\lambda_{n,j}t},\quad E_n(t)=\sum_{j=1}^{3}E_{n,j}e^{\lambda_{n,j}t}, \quad F_n(t)=\sum_{j=1}^{3}F_{n,j}e^{\lambda_{n,j}t},
\end{align}
where
\begin{align*}
& D_{n,1}=\frac{\lambda_{n,2}\lambda_{n,3}}{\xi_{n,1}},\quad D_{n,2}=-\frac{\lambda_{n,1}\lambda_{n,3}}{\xi_{n,2}},\quad D_{n,3}=\frac{\lambda_{n,1}\lambda_{n,2}}{\xi_{n,3}},\\
& E_{n,1}=\frac{\lambda_{n,2}+\lambda_{n,3}}{\xi_{n,1}},\quad E_{n,2}=-\frac{\lambda_{n,1}+\lambda_{n,3}}{\xi_{n,2}},\quad E_{n,3}=\frac{\lambda_{n,1}+\lambda_{n,2}}{\xi_{n,3}},\\
& F_{n,1}=\frac{1}{\xi_{n,1}},\quad F_{n,2}=-\frac{1}{\xi_{n,2}},\quad F_{n,3}=\frac{1}{\xi_{n,3}}.
\end{align*}

We proof the Lemma in several steps. First of all, we notice that sine $\lambda_{n,1}<0$ and $\operatorname{Re}(\lambda_{n,j})<0$, for $j=2,3$, it follows that $\left|e^{\lambda_{n,jt}}\right|\le 1$ for $j=1,2,3$.\\

{\bf Step 1}: Observe that
\begin{align}\label{lambda1}
\left|\frac{\lambda_n^{\frac 12}}{\lambda_{n,2}}\right|\leq \frac{1}{\left|\frac{|\text{Re}(\lambda_{n,2})|}{\lambda_n^{\frac 12}}-\frac{|\text{Im}(\lambda_{n,2})|}{\lambda_n^{\frac 12}}\right|}.
\end{align}
Since $\text{Re}(\lambda_{n,2})\searrow -\frac{\gamma}{2}$, $|\text{Im}(\lambda_{n,2})|\sim \sqrt{b\lambda_{n}}$ and $\lambda_{n}\to +\infty$ as $n\to\infty$, it follows that the sequence $\left\{\left|\frac{\lambda_n^{\frac 12}}{\lambda_{n,2}}\right|\right\}_{n\in\NN}$ is convergent. Using that $\lambda_{n,3}=\overline{\lambda_{n,2}}$ we also obtain that the sequence $\left\{\left|\frac{\lambda_n^{\frac 12}}{\lambda_{n,3}}\right|\right\}_{n\in\NN}$ is convergent. Thus these two sequences are bounded.\\

{\bf Step 2}: We claim that the sequence $\{|\lambda_{n,j}D_{n,j}|\}_{n\in\NN}$ is convergent. Indeed, since $\lambda_{n,3}=\overline{\lambda_{n,2}}$, it suffices prove the cases $j=1,2$. We observe that

\begin{align*}
|\lambda_{n,1}D_{n,1}|&=\left|\lambda_{n,1}\frac{\lambda_{n,2}\overline{\lambda_{n,2}}}{(\lambda_{n,1}-\lambda_{n,2})(\lambda_{n,1}-\overline{\lambda_{n,2}})}\right|=|\lambda_{n,1}|\left|\frac{\lambda_{n,2}}{\lambda_{n,1}-\lambda_{n,2}}\right|^2\\
&\leq |\lambda_{n,1}|\left|\frac{\frac{|\text{Re}(\lambda_{n,2})|}{|\text{Im}(\lambda_{n,2})|}+1}{\left|\frac{|\text{Re}(\lambda_{n,1}-\lambda_{n,2})|}{|\text{Im}(\lambda_{n,2})|}-\frac{|\text{Im}(\lambda_{n,1}-\lambda_{n,2})|}{|\text{Im}(\lambda_{n,2})|}\right|}\right|^2,
\end{align*}
and
\begin{align*}
|\lambda_{n,2}D_{n,2}|&=\left|\lambda_{n,1}\frac{\lambda_{n,2}\overline{\lambda_{n,2}}}{(\lambda_{n,1}-\lambda_{n,2})(\lambda_{n,2}-\overline{\lambda_{n,2}})}\right|=|\lambda_{n,1}|\frac{|\lambda_{n,2}|^2}{|\lambda_{n,1}-\lambda_{n,2}||2i\text{Im}(\lambda_{n,2})|}\\
&\leq |\lambda_{n,1}|\frac{\frac{|\text{Re}(\lambda_{n,2})|}{|\text{Im}(\lambda_{n,2})|}+1}{\left|\frac{|\text{Re}(\lambda_{n,1}-\lambda_{n,2})|}{|\text{Im}(\lambda_{n,2})|}-\frac{|\text{Im}(\lambda_{n,1}-\lambda_{n,2})|}{|\text{Im}(\lambda_{n,2})|}\right|}\frac{\frac{|\text{Re}(\lambda_{n,2})|}{|\text{Im}(\lambda_{n,2})|}+1}{\frac{2|\text{Im}(\lambda_{n,2})|}{|\text{Im}(\lambda_{n,2})|}}.
\end{align*}
From \eqref{asym-eigenvalues}, we can deduce the convergence of the sequence $\{|\lambda_{n,j}D_{n,j}|\}_{n\in\NN}$.\\

{\bf Step 3}: Now we prove \eqref{ES-D}.  Notice that 
\begin{align*}
|D_{n,j}|=\left|\frac{1}{\lambda_{n,j}}\right| |\lambda_{n,j}D_{n,j}|,
\end{align*}
and since $\{|\lambda_{n,j} D_{n,j}|\}_{n\in\NN}$ and $\{\left|\frac{1}{\lambda_{n,j}}\right|\}_{n\in\NN}$ are convergent, we obtain that $\{|D_{n,j}|\}_{n\in\NN}$, for $j=1,2$, is convergent. Thus, the sequence $\{|D_{n}(t)|\}_{n\in\NN}$ is bounded.

The case of  $\{|D_n'(t)|\}_{n\in\NN}$ is a simple consequence of Step 2 due to the fact that
\begin{align*}
D_{n}'(t)=\sum_{j=1}^{3}\lambda_{n,j}D_{n,j}e^{\lambda_{n,j}t}.
\end{align*}

For the convergence of the sequence $\left\{\left|\frac{D_n''(t)}{\lambda_{n}^{\frac 12}}\right|\right\}_{n\in\NN}$, we observe that
\begin{align*}
\frac{D_n''(t)}{\lambda_n^{\frac 12}}=\sum_{j=1}^{3}\frac{\lambda_{n,j}^2}{\lambda_n^{\frac 12}}D_{n,j}e^{\lambda_{n,j}t}=\sum_{j=1}^{3}\frac{\lambda_{n,j}}{\lambda_n^{\frac 12}} \lambda_{n,j}D_{n,j}e^{\lambda_{n,j}t}.
\end{align*}
Since $\{|\lambda_{n,j} D_{n,j}|\}_{n\in\NN}$ and $\left\{\left|\D\frac{\lambda_n^{\frac 12}}{\lambda_{n,2}}\right|\right\}_{n\in\NN}$ are convergent, we obtain that $\left\{\left|\frac{D_n''(t)}{\lambda_{n}^{\frac 12}}\right|\right\}_{n\in\NN}$ is bounded. From the above computation we also deduce that the sequence $\left\{\left|\frac{D_n''(t)}{\lambda_{n}^{\frac 32}}\right|\right\}_{n\in\NN}$ is  bounded. Therefore, we can deduce that \eqref{ES-D} holds.\\

{\bf Step 4}: To prove \eqref{ES-E}, we observe the following:
\begin{align*}
E_{n,1}=-\left(\frac{1}{\lambda_{n,3}}+\frac{1}{\lambda_{n,2}}\right)D_{n,1},\quad 
E_{n,2}=\left(\frac{1}{\lambda_{n,3}}+\frac{1}{\lambda_{n,1}}\right)D_{n,2},\\
E_{n,3}=-\left(\frac{1}{\lambda_{n,2}}+\frac{1}{\lambda_{n,1}}\right)D_{n,3},
\end{align*}
\begin{align*}
\lambda_{n,1}E_{n,1}=-\left(\frac{1}{\lambda_{n,3}}+\frac{1}{\lambda_{n,2}}\right)\lambda_{n,1}D_{n,1},\quad 
\lambda_{n,2}E_{n,2}=\left(\frac{1}{\lambda_{n,3}}+\frac{1}{\lambda_{n,1}}\right)\lambda_{n,2}D_{n,2},\\
\lambda_{n,3}E_{n,3}=-\left(\frac{1}{\lambda_{n,2}}+\frac{1}{\lambda_{n,1}}\right)\lambda_{n,3}D_{n,3},
\end{align*}
and
\begin{align*}
\frac{\lambda_{n,1}^2}{\lambda_n^{\frac 12}}E_{n,1}=-\left(\frac{1}{\lambda_{n,3}}+\frac{1}{\lambda_{n,2}}\right)\frac{\lambda_{n,1}}{\lambda_n^{\frac 12}}\lambda_{n,1}D_{n,1},\quad 
\frac{\lambda_{n,2}^2}{\lambda_n^{\frac 12}}E_{n,2}=\left(\frac{1}{\lambda_{n,3}}+\frac{1}{\lambda_{n,1}}\right)\frac{\lambda_{n,2}}{\lambda_n^{\frac 12}}\lambda_{n,2}D_{n,2},\\
\frac{\lambda_{n,3}^2}{\lambda_n^{\frac 12}}E_{n,3}=-\left(\frac{1}{\lambda_{n,2}}+\frac{1}{\lambda_{n,1}}\right)\frac{\lambda_{n,3}}{\lambda_n^{\frac 12}}\lambda_{n,3}D_{n,3}.
\end{align*}
Then, using the previous steps  we obtain \eqref{ES-E}.\\

{\bf Step 5}: We claim that $\{|\lambda_n F_{n,j}|\}_{n\in\NN}$ is convergent. Indeed,
\begin{align*}
|\lambda_n F_{n,j}|&\leq \left|\frac{\lambda_n}{\xi_{n,1}}\right|+\left|\frac{\lambda_n}{\xi_{n,2}}\right|+\left|\frac{\lambda_n}{\xi_{n,3}}\right|\\
&= \left|\frac{\lambda_n}{\lambda_{n,2}\lambda_{n,3}}\right| \left|\frac{\lambda_{n,2}\lambda_{n,3}}{\xi_{n,1}}\right|+\left|\frac{\lambda_n}{\lambda_{n,1}\lambda_{n,2}\lambda_{n,3}}\right| \left|\frac{\lambda_{n,1}\lambda_{n,2}\lambda_{n,3}}{\xi_{n,2}}\right|+\left|\frac{\lambda_n}{\lambda_{n,1}\lambda_{n,2}\lambda_{n,3}}\right| \left|\frac{\lambda_{n,1}\lambda_{n,2}\lambda_{n,3}}{\xi_{n,3}}\right|\\
&= \left|\frac{\lambda_n}{\lambda_{n,2}\lambda_{n,3}}\right||D_{n,1}|+\left|\frac{\lambda_n}{\lambda_{n,1}\lambda_{n,2}\lambda_{n,3}}\right||\lambda_{n,2}D_{n,2}|+\left|\frac{\lambda_n}{\lambda_{n,1}\lambda_{n,2}\lambda_{n,3}}\right||\lambda_{n,3}D_{n,3}|.
\end{align*}
We observe that
\begin{align*}
\left|\frac{\lambda_n}{\lambda_{n,1}\lambda_{n,2}\lambda_{n,3}}\right| =\frac{1}{|\lambda_{n,1}|}\left|\frac{\lambda_n}{\lambda_{n,2}\overline{\lambda_{n,2}}}\right|
=\frac{1}{|\lambda_{n,1}|}\left|\frac{\lambda_n}{|\lambda_{n,2}|^2}\right|
=\frac{1}{|\lambda_{n,1}|}\frac{1}{\left|\frac{|\text{Re}(\lambda_{n,2})|^2}{\lambda_n}+\frac{|\text{Im}(\lambda_{n,2})|^2}{\lambda_n}\right|}.
\end{align*}
Using the convergence property \eqref{asym-eigenvalues}, we obtain the desired result.\\

{\bf Step 6}: Finally, we  prove \eqref{ES-F}.  Observe that
\begin{align*}
\lambda_n^{\frac 12}F_n(t)&=\frac{\lambda_n^{\frac 12}}{\lambda_{n,2}\lambda_{n,3}}\frac{\lambda_{n,2}\lambda_{n,3}}{\xi_{n,1}}e^{\lambda_{n,1}t}-\frac{\lambda_n^{\frac 12}}{\lambda_{n,1}\lambda_{n,3}}\frac{\lambda_{n,1}\lambda_{n,3}}{\xi_{n,2}}e^{\lambda_{n,2}t}+\frac{\lambda_n^{\frac 12}}{\lambda_{n,1}\lambda_{n,2}}\frac{\lambda_{n,1}\lambda_{n,2}}{\xi_{n,3}}e^{\lambda_{n,3}t}\\
&=\frac{\lambda_n^{\frac 12}}{\lambda_{n,2}\lambda_{n,3}}D_{n,1}e^{\lambda_{n,1}t}-\frac{\lambda_n^{\frac 12}}{\lambda_{n,1}\lambda_{n,3}}D_{n,2}e^{\lambda_{n,2}t}+\frac{\lambda_n^{\frac 12}}{\lambda_{n,1}\lambda_{n,2}}D_{n,3}e^{\lambda_{n,3}t}.
\end{align*}
Therefore, it is easy to see that $\{|\lambda_n^{\frac 12}F_n(t)|\}_n$ is a bounded sequence. 

With a similar argument, we can  deduce that the following sequences $\left\{|\lambda_n^{\frac 12}F_n'(t)|\right\}_{n\in\NN}$, $\left\{|F_n''(t)|\right\}_{n\in\NN}$ and $\left\{\left|\frac{F_n''(t)}{\lambda_n^{\frac 12}}\right|\right\}_{n\in\NN}$ are bounded.
The proof is finished.
\end{proof}

\begin{proof}[\bf Proof of Theorem \ref{theo-48}]
Let
\begin{align}\label{24.1}
\psi_0=\sum_{n=1}^{\infty}\psi_{0,n}\varphi_{n}, \quad \psi_1=\sum_{n=1}^{\infty}\psi_{1,n}\varphi_{n}, \quad \psi_2=\sum_{n=1}^{\infty}\psi_{2,n}\varphi_{n}.
\end{align}
We proof the theorem in several steps. Here we include more details.\\

{\bf Step 1}: Proceeding in the same way as the proof of Proposition \ref{pro-sol-SG}, we easily get that 
\begin{align}\label{25}
\psi(x,t)=\sum_{n=1}^{\infty}\Big[D_{n}(T-t)\psi_{0,n}+E_{n}(T-t)\psi_{1,n}+F_{n}(T-t)\psi_{2,n}\Big]\varphi_{n}(x),
\end{align}
where $D_{n}(t)=A_{n}(t)$, $E_{n}(t)=-B_{n}(t)$ and $F_n(t)=C_n(t)$. In addition, a simple calculation gives $\psi(x,T)=\psi_0(x)$, $\psi_t(x,T)=-\psi_1(x)$ and $\psi_{tt}(x,T)=\psi_2(x)$ for a.e. $x\in\Omega$.

Let us show that $\psi$ satisfies the regularity and variational identity requirements. Let $1\le n\le m$ and set
\begin{align*}
\psi_m(x,t)=\sum_{n=1}^{m}\Big[D_{n}(T-t)\psi_{0,n}+E_{n}(T-t)\psi_{1,n}+F_{n}(T-t)\psi_{2,n}\Big]\varphi_{n}(x).
\end{align*}
For every $m,\tilde m\in\NN$ with $m>\tilde m$ and $t\in [0,T]$, we have that
\begin{align}\label{Nor1}
\|\psi_m(x,t)-\psi_{\tilde m}(x,t)\|_{W_0^{s,2}(\bOm)}^2=&\sum_{n=\tilde m+1}^m\Big|\lambda_n^{\frac 12}D_{n}(T-t)\psi_{0,n}+\lambda_n^{\frac 12}E_{n}(T-t)\psi_{1,n}+\lambda_n^{\frac 12}F_{n}(T-t)\psi_{2,n}\Big|^2\notag\\
\le &2\sum_{n=\tilde m+1}^m\Big|\lambda_n^{\frac 12}D_{n}(T-t)\psi_{0,n}\Big|^2+2\sum_{n=\tilde m+1}^m\Big|\lambda_n^{\frac 12}E_{n}(T-t)\psi_{1,n}\Big|^2\notag\\
&+2\sum_{n=\tilde m+1}^m\Big|\lambda_n^{\frac 12}F_{n}(T-t)\psi_{2,n}\Big|^2.
\end{align}
Using \eqref{ES-D}, \eqref{ES-E} and \eqref{ES-F} we get from \eqref{Nor1} that for every $m,\tilde m\in\NN$ with $m>\tilde m$ and $t\in [0,T]$,
\begin{multline*}
\|\psi_m(x,t)-\psi_{\tilde m}(x,t)\|_{W_0^{s,2}(\bOm)}^2\le  C\left(\sum_{n=\tilde m+1}^m\Big|\lambda_n^{\frac 12}\psi_{0,n}\Big|^2+\sum_{n=\tilde m+1}^m\Big|\lambda_n^{\frac 12}\psi_{1,n}\Big|^2 +\sum_{n=\tilde m+1}^m\Big|\psi_{2,n}\Big|^2\right)\longrightarrow 0,
\end{multline*}
 as $m$, $\tilde m\to\infty$.
We have shown that the series
\begin{align*}
\sum_{n=1}^{\infty}\Big[D_{n}(T-t)\psi_{0,n}+E_{n}(T-t)\psi_{1,n}+F_{n}(T-t)\psi_{2,n}\Big]\varphi_{n}\longrightarrow \psi(\cdot,t)\;\mbox{ in }\; W_0^{s,2}(\bOm),
\end{align*}
 and that the convergence is uniform in $t\in [0,T]$. Hence, $\psi\in C([0,T];W_0^{s,2}(\bOm))$. Using \eqref{ES-D}, \eqref{ES-E} and \eqref{ES-F} again we get that there is a constant $C>0$ such that for every $t\in [0,T]$,
 \begin{align}\label{A1}
 \|\psi(\cdot,t)\|_{W_0^{s,2}(\bOm)}\le C\Big(\|\psi_0\|_{W_0^{s,2}(\bOm)}+\|\psi_1\|_{W_0^{s,2}(\bOm)}+\|\psi_2\|_{L^2(\Omega)}\Big).
 \end{align}

 {\bf Step 2}: Next, we show that $\psi_t\in C([0,T];W_0^{s,2}(\bOm))$. Indeed, we have
  \begin{align*}
 (\psi_m)_t(x,t)=-\sum_{n=1}^{m}\Big[D_{n}'(T-t)\psi_{0,n}+E_{n}'(T-t)\psi_{1,n}+F_{n}'(T-t)\psi_{2,n}\Big]\varphi_{n}(x).
 \end{align*}
 Proceeding as above, we obtain that the series
  \begin{align*}
 \sum_{n=1}^{\infty}\Big[D_{n}'(T-t)\psi_{0,n}+E_{n}'(T-t)\psi_{1,n}+F_{n}'(T-t)\psi_{2,n}\Big]\varphi_{n}\longrightarrow \psi_t(\cdot,t)\;\mbox{ in }\;W_0^{s,2}(\bOm),
 \end{align*}
  and the convergence is uniform in $t\in [0,T]$. As in the previous case, using \eqref{ES-D}, \eqref{ES-E} and \eqref{ES-F}, we get that there is a constant $C>0$ such that for every $t\in [0,T]$,
 \begin{align}\label{A2}
\|\psi_t(\cdot,t)\|_{L^2(\Omega)}^2\le C\Big(\|\psi_0\|_{W_0^{s,2}(\bOm)}^2+\|\psi_1\|_{W_0^{s,2}(\bOm)}^2+\|\psi_2\|_{L^2(\Omega)}^2\Big).
\end{align}

{\bf Step 3}:  Next, we claim that $\psi_{tt}\in C([0,T];L^2(\Omega))$. Calculating, we get that 
 \begin{align*}
 (\psi_m)_{tt}(x,t)=\sum_{n=1}^{m}\Big[D_{n}''(T-t)\psi_{0,n}+E_{n}''(T-t)\psi_{1,n}+F_{n}''(T-t)\psi_{2,n}\Big]\varphi_{n}(x).
 \end{align*}
As in Step 1,  we obtain that for every $m,\tilde m\in\NN$ with $m>\tilde m$ and $t\in [0,T]$
\begin{align}\label{Nor2}
&\|\partial_{tt}\psi_m(x,t)-\partial_{tt}\psi_{\tilde m}(x,t)\|_{L^2(\Omega)}^2 \\
&\le 2\sum_{n=\tilde m+1}^m\Big|\lambda_n^{\frac 12}\frac{D_{n}''(T-t)}{\lambda_n^{\frac 12}}\psi_{0,n}\Big|^2+2\sum_{n=\tilde m+1}^m\Big|\lambda_n^{\frac 12}\frac{E_{n}''(T-t)}{\lambda_n^{\frac 12}}\psi_{1,n}\Big|^2+2\sum_{n=\tilde m+1}^m\Big|F_{n}''(T-t)\psi_{2,n}\Big|^2\notag\\
&\le  C\left(\sum_{n=\tilde m+1}^m\Big|\lambda_n^{\frac 12}\psi_{0,n}\Big|^2+\sum_{n=\tilde m+1}^m\Big|\lambda_n^{\frac 12}\psi_{1,n}\Big|^2+\sum_{n=\tilde m+1}^m\Big|\psi_{2,n}\Big|^2\right)\notag\\
\quad&\longrightarrow 0 \;\mbox{ as }\; m,\tilde m\to\infty.
\end{align}

 Again, we can easily deduce that the series
 \begin{align*}
 \sum_{n=1}^{\infty}\Big[D_{n}''(T-t)\psi_{0,n}+E_{n}''(T-t)\psi_{1,n}+F_{n}''(T-t)\psi_{2,n}\Big]\varphi_{n}\longrightarrow \psi_{tt}(\cdot,t)\;\mbox{ in }\;L^2(\Omega),
 \end{align*}
 and the convergence is uniform in $t\in [0,T]$. In addition using \eqref{ES-D}, \eqref{ES-E} and \eqref{ES-F}, we get that there is a constant $C>0$ such that for every $t\in [0,T]$,
\begin{align}\label{A3}
\|\psi_{tt}(\cdot,t)\|_{L^2(\Omega)}^2\le C\Big(\|\psi_0\|_{W_0^{s,2}(\bOm)}^2+\|\psi_1\|_{W_0^{s,2}(\bOm)}^2+\|\psi_2\|_{L^2(\Omega)}^2\Big).
\end{align}

The estimate \eqref{Dual-EST-1} follows from \eqref{A1}, \eqref{A2} and \eqref{A3}.\\

{\bf Step 4}: We show that $\psi_{ttt}\in C([0,T);W^{-s,2}(\bOm))$. Using \eqref{norm-V-2}, \eqref{ES-D}, \eqref{ES-E} and \eqref{ES-F}, we get that for every $t\in [0,T]$,
\begin{align}\label{MW1}
&\|(-\Delta)_D^s\psi(\cdot,t)\|_{W^{-s,2}(\bOm)}^2\notag\\
\le &2\sum_{n=1}^\infty\left(\Big|\lambda_n^{-\frac 12}\lambda_nD_n(T-s)\psi_{0,n}\Big|^2+\Big|\lambda_n^{-\frac 12}\lambda_nE_n(T-s)\psi_{1,n}\Big|^2+\Big|\lambda_n^{-\frac 12}\lambda_nF_n(T-s)\psi_{2,n}\Big|^2\right)\notag\\
\le &2\sum_{n=1}^\infty\left(\Big|\lambda_n^{\frac 12}D_n(T-s)\psi_{0,n}\Big|^2+\Big|\lambda_n^{\frac 12}E_n(T-s)\psi_{1,n}\Big|^2+\Big|\lambda_n^{\frac 12}F_n(T-s)\psi_{2,n}\Big|^2\right)\notag\\
\le &C\Big(\|\psi_0\|_{W_0^{s,2}(\bOm)}^2+\|\psi_1\|_{W_0^{s,2}(\bOm)}^2+\|\psi_2\|_{L^2(\Omega)}^2\Big).
\end{align}
Using \eqref{norm-V-2}, \eqref{ES-D}, \eqref{ES-E} and \eqref{ES-F} again we get that there is a constant $C>0$ such that for every $t\in [0,T]$, 
\begin{align}\label{MW2}
&\|(-\Delta)_D^s\psi_t(\cdot,t)\|_{W^{-s,2}(\bOm)}^2\notag\\
\le &2\sum_{n=1}^\infty\left(\Big|\lambda_n^{-\frac 12}\lambda_nD_n'(T-s)\psi_{0,n}\Big|^2+\Big|\lambda_n^{-\frac 12}\lambda_nE_n'(T-s)\psi_{1,n}\Big|^2+\Big|\lambda_n^{-\frac 12}\lambda_nF_n'(T-s)\psi_{2,n}\Big|^2\right)\notag\\
\le &2\sum_{n=1}^\infty\left(\Big|\lambda_n^{\frac 12}D_n'(T-s)\psi_{0,n}\Big|^2+\Big|\lambda_n^{\frac 12}E_n'(T-s)\psi_{1,n}\Big|^2+\Big|\lambda_n^{\frac 12}F_n'(T-s)\psi_{2,n}\Big|^2\right)\notag\\
\le &C\Big(\|\psi_0\|_{W_0^{s,2}(\bOm)}^2+\|\psi_1\|_{W_0^{s,2}(\bOm)}^2+\|\psi_2\|_{L^2(\Omega)}^2\Big).
\end{align}
Finally, using \eqref{norm-V-2}, \eqref{ES-D}, \eqref{ES-E} and \eqref{ES-F} again we get 
\begin{align}\label{MW3}
\|\psi_{tt}(\cdot,t)\|_{W^{-s,2}(\bOm)}^2\le &2\sum_{n=1}^\infty\left(\Big|\lambda_n^{-\frac 12}D_n''(T-s)\psi_{0,n}\Big|^2+\Big|\lambda_n^{-\frac 12}E_n''(T-s)\psi_{1,n}\Big|^2+\Big|\lambda_n^{-\frac 12}F_n''(T-s)\psi_{2,n}\Big|^2\right)\notag\\
\le &2\sum_{n=1}^\infty\left(\Big|\lambda_n^{\frac 12}\frac{D_n''(T-s)}{\lambda_{n}^{\frac 32}}\psi_{0,n}\Big|^2+\Big|\lambda_n^{\frac 12}\frac{E_n''(T-s)}{\lambda_{n}^{\frac 32}}\psi_{1,n}\Big|^2+\Big|\lambda_n^{-\frac 12}F_n''(T-s)\psi_{2,n}\Big|^2\right)\notag\\
\le &C\Big(\|\psi_0\|_{W_0^{s,2}(\bOm)}^2+\|\psi_1\|_{W_0^{s,2}(\bOm)}^2+\|\psi_2\|_{L^2(\Omega)}^2\Big).
\end{align}

Since $\psi_{ttt}(\cdot,t)=-\alpha\psi_{tt}(\cdot,t)-c^2(-\Delta)_D^s\psi(\cdot,t)+b(-\Delta)_D^s\psi_t(\cdot,t)$, it follows from \eqref{MW1}, \eqref{MW2} and \eqref{MW3} that
\begin{align*}
\|\psi_{ttt}(\cdot,t)\|_{W^{-s,2}(\bOm)}^2\le C\Big(\|\psi_0\|_{W_0^{s,2}(\bOm)}^2+\|\psi_1\|_{W_0^{s,2}(\bOm)}^2+\|\psi_2\|_{L^2(\Omega)}^2\Big),
\end{align*}
 and we have also shown \eqref{Dual-EST-1-2}.  We can also easily deduce that  $\psi_{ttt}\in C([0,T);W^{-s,2}(\bOm))$.\\

{\bf Step 5}: We claim that $\psi\in L^\infty((0,T); D((-\Delta)_D^s))$. It follows from the estimate \eqref{Dual-EST-1} that $\psi\in L^\infty((0,T);L^2(\Omega))$. Since $D((-\Delta)_D^s)\times D((-\Delta)_D^s)\times L^2(\Om)$ is dense in the Banach space $W_0^{s,2}(\bOm)\times  W_0^{s,2}(\bOm)\times  L^2(\Omega)$, it suffices to consider  $(\psi_0,\psi_1,\psi_2)\in D((-\Delta)_D^s)\times D((-\Delta)_D^s)\times L^2(\Om)$. Proceeding as above we get that
\begin{align}\label{M-W}
\|\psi(\cdot,t)\|_{D((-\Delta)_D^s)}^2=&\|(-\Delta)_D^s\psi(\cdot,t)\|_{L^2(\Omega)}^2\notag\\
\le &2\sum_{n=1}^\infty\left(\Big|D_n(T-t)\lambda_n\psi_{0,n}\Big|^2+\Big|E_n(T-t)\lambda_n\psi_{1,n}\Big|^2+\Big|\lambda_nF_n(T-t)\psi_{2,n}\Big|^2\right).
\end{align} 
It follows from \eqref{M-W}, \eqref{ES-D}, \eqref{ES-E} and \eqref{ES-F} that
\begin{align*}
\|\psi(\cdot,t)\|_{D((-\Delta)_D^s)}^2\le C\Big(\|\psi_0\|_{D((-\Delta)_D^s)}^2+\|\psi_1\|_{D((-\Delta)_D^s)}^2+\|\psi_2\|_{L^2(\Omega)}^2\Big).
\end{align*}
Thus $\psi\in L^\infty((0,T); D((-\Delta)_D^s))$ and we have shown the claim.\\

{\bf Step 6}: It is easy to see that the mapping $[0,T)\ni t\to \psi(\cdot,t)\in L^2(\Omc)$ can be analytically extended to $\Sigma_T$. We also recall that for every $t\in [0,T)$ fixed, we have that $\psi(\cdot,t)\in D((-\Delta)_D^s)\subset W^{s,2}(\RR^N)$. Therefore, $\mathcal N_sv(\cdot,t)$ exists and belongs to $L^2(\Omc)$.

We claim that 

\begin{align}\label{28}
\mathcal{N}_{s}\psi(x,t)=\sum_{n=1}^{\infty}\Big(D_{n}(T-t)\psi_{0,n}+E_{n}(T-t)\psi_{1,n}+F_{n}(T-t)\psi_{2,n}\Big)\mathcal{N}_{s}\varphi_{n}(x),
\end{align}
and the series is convergent in $L^2(\RR^{N}\setminus\Omega)$ and that the convergence is uniform in $t\in[0,T)$. Indeed, let $\eta>0$ be fixed but arbitrary and let $t\in[0,T-\eta]$. Let $n,m\in\NN$ with $n>m$. Since $\mathcal{N}_{s}:W^{s,2}(\RR^{N})\to L^2(\RR^{N}\setminus\Omega)$ is bounded, then using \eqref{ES-D}, \eqref{ES-E} and \eqref{ES-F}, we get that there is a constant $C>0$ such that

\begin{align*}
&\left\|\sum_{n=m+1}^{\infty}\Big(D_{n}(T-t)\psi_{0,n}+E_{n}(T-t)\psi_{1,n}+F_{n}(T-t)\psi_{2,n}\Big)\mathcal{N}_{s}\varphi_{n}\right\|_{L^2(\RR^{N}\setminus\Omega)}^2\\
\le &C\left\|\sum_{n=m+1}^{\infty}\Big(D_{n}(T-t)\psi_{0,n}+E_{n}(T-t)\psi_{1,n}+F_{n}(T-t)\psi_{2,n}\Big)\varphi_{n}\right\|_{W_0^{s,2}(\bOm)}^2\\
\leq &C\left(\sum_{n=m+1}^{\infty}|\psi_{0,n}|^2+\sum_{n=m+1}^{\infty}|\psi_{1,n}|^2+\sum_{n=m+1}^{\infty}|\psi_{2,n}|^2\right)\longrightarrow 0\text{ as }m\to \infty.
\end{align*}
Thus, $\mathcal{N}_{s}$ is given by \eqref{28} and the series is convergent in $L^2(\RR^{N}\setminus\Omega)$ uniformly in any compact subset of $[0,T)$.

Besides, we obtain the following continuous dependence on the data for the nonlocal normal derivative. Let $m\in\NN$  and consider
\begin{align}\label{27.1}
\psi_{m}(x,t)=\sum_{n=1}^{m}\Big(D_{n}(T-t)\psi_{0,n}+E_{n}(T-t)\psi_{1,n}+F_{n}(T-t)\psi_{2,n}\Big)\mathcal{N}_{s}\varphi_{n}(x).
\end{align}
Using the fact that the operator $\mathcal{N}_{s}:  W^{s,2}_0(\bOm)\longrightarrow L^2(\RR^{N}\setminus\Omega)$ is bounded,  the continuous embedding $W_0^{s,2}(\bOm)\hookrightarrow L^{2}(\Omega)$, \eqref{ES-D} and \eqref{ES-E} and \eqref{ES-D}, we get that there is a constant $C>0$ such that for every $t\in [0,T]$,

\begin{align}
&\left\|\sum_{k=1}^{m} D_{k}(T-t)\psi_{0,k}\mathcal{N}_{s}\varphi_{k}\right\|_{L^2(\RR^{N}\setminus\Omega)}^2+\left\|\sum_{k=1}^{m} E_{k}(T-t)\psi_{1,k}\mathcal{N}_{s}\varphi_{k}\right\|_{L^2(\RR^{N}\setminus\Omega)}^2+\left\|\sum_{k=1}^{m} F_{k}(T-t)\psi_{2,k}\mathcal{N}_{s}\varphi_{k}\right\|_{L^2(\RR^{N}\setminus\Omega)}^2\notag\\ 
&\leq C\left(\left\|\sum_{k=1}^{m}D_{k}(T-t)\psi_{0,k}\varphi_{k}\right\|_{W_0^{s,2}(\bOm)}^2+\left\|\sum_{k=1}^{m}E_{k}(T-t)\psi_{1,k}\varphi_{k}\right\|_{W_0^{s,2}(\bOm)}^2+\left\|\sum_{k=1}^{m}F_{k}(T-t)\psi_{2,k}\varphi_{k}\right\|_{L^2(\RR^{N}\setminus\Omega)}^2\right)\nonumber\\
&\leq C \left(\|\psi_0\|_{W^{s,2}(\bOm)}^2+\|\psi_1\|_{W^{s,2}(\bOm)}^2+\|\psi_2\|_{L^2(\Omega)}^2\right).\label{27.2}
\end{align} 

It follows from \eqref{27.2} that

\begin{align}\label{27.4}
\|\mathcal{N}_{s}\psi(\cdot,t)\|_{L^2(\RR^{N}\setminus\Omega)}^2\leq C\Big(\|\psi_0\|_{W^{s,2}(\bOm)}^2+\|\psi_1\|_{W^{s,2}(\bOm)}^2+\|\psi_2\|_{L^2(\Omega)}^2\Big).
\end{align}

Next, since the functions $D_n(z)$, $E_n(z)$ and $F_n(z)$ are entire functions, it follows that the function

\begin{align*}
\sum_{n=1}^{m}\Big[D_{n}(T-z)\psi_{0,n}+E_{n}(T-z)\psi_{1,n}+F_{n}(T-z)\psi_{2,n}\Big]\mathcal{N}_{s}\varphi_{n}
\end{align*}
is analytic in $\Sigma_T$. 

Let $\tau>0$ be fixed but otherwise arbitrary. Let $z\in\CC$ satisfy $\mbox{Re}(z)\le T-\tau$. Then proceeding as above by using \eqref{ES-D}, \eqref{ES-E} and \eqref{ES-F}, we get that

 \begin{align*}
& \left\Vert\sum_{n=m+1}^\infty \psi_{0,n}D_n(T-z)\mathcal N_s \varphi_n\right\Vert_{L^2(\Omc)}^2
+\left\Vert\sum_{n=m+1}^\infty \psi_{1,n}E_n(T-z)\mathcal N_s \varphi_n\right\Vert_{L^2(\Omc)}^2\\
&+\left\Vert\sum_{n=m+1}^\infty \psi_{2,n}F_n(T-z)\mathcal N_s \varphi_n\right\Vert_{L^2(\Omc)}^2\\
 \le &C\sum_{n=m+1}^\infty \left|\lambda_n^{\frac 12}\psi_{0,n}\right|^2 +\sum_{n=m+1}^\infty \left|\lambda_n^{\frac 12}\psi_{1,n}\right|^2 +C\sum_{n=m+1}^\infty |\psi_{2,n}|^2\longrightarrow 0\;\mbox{ as }\; m\to\infty.
 \end{align*}
 We have shown that
 
 \begin{align}\label{form-nor}
\mathcal N_s \psi(\cdot,z)=&\sum_{n=1}^\infty \psi_{0,n}D_n(T-z)\mathcal N_s \varphi_n+\sum_{n=1}^\infty \psi_{1,n}E_n(T-z)\mathcal N_s \varphi_n+\sum_{n=1}^\infty \psi_{2,n}F_n(T-z)\mathcal N_s \varphi_n,
 \end{align}
and the series is  convergent in $L^2(\Omc)$ uniformly in any compact subset of $\Sigma_T$. Thus, $\mathcal N_s \psi$ given in \eqref{form-nor} is also analytic in $\Sigma_T$.  The proof is finished.
\end{proof}

\section{Proof of the main controllability results }\label{prof-ma-re}

In this section we prove the main results stated in Section \ref{sec-2}. 

\subsection{The lack of exact or null controllability result}
We start with the proof of the lack of null/exact controllability of the system \eqref{SD-WE}. For this purpose, we will use the following concept of controllability.

\begin{definition}\label{esp-con}
The system \eqref{SD-WE} is said to be \emph{spectrally controllable} if any finite linear combination of eigenvectors
\begin{align*}
u_0=\sum_{n=1}^{M}u_{0,n}\varphi_{n}, \quad u_1=\sum_{n=1}^{M}u_{1,n}\varphi_{n},  \quad u_2=\sum_{n=1}^{M}u_{2,n}\varphi_{n},
\end{align*}
can be steered to zero by a control function $g$.
\end{definition}

Next, let $(u,u_t,u_{tt})$ and $(\psi,\psi_t,\psi_{tt})$ be the weak solutions of \eqref{SD-WE} and \eqref{ACP-Dual}, respectively. Multiplying the first equation in \eqref{SD-WE} by $\psi$, then integrating by parts over $(0,T)$ and over $\Om$ and using the integration by parts formulas \eqref{Int-Part}-\eqref{Int-Part-2}, we get 

\begin{align}\label{32}
\int_\Omega& \Big(u_{tt}\psi-u_t\psi_{t}+ u\psi_{tt}+\alpha(u_t\psi-u\psi_t)+b u(-\Delta)^s \psi\Big)\Big|_{t=0}^{t=T}dx\notag\\
&=\int_0^{T}\int_{\Omc}\Big(c^2 g(x,t)+b g_t(x,t)\Big)\mathcal{N}_{s}\psi(x,t)dxdt.
\end{align} 

Using the identity \eqref{32} and a density argument to pass to the limit, we obtain the following criterion of null and exact controllabilities.

\begin{lemma}\label{L1}
The following assertions hold.
\begin{enumerate}
\item  The system \eqref{SD-WE} is null controllable if and only if for each initial condition $(u_0,u_1,u_2)\in W_0^{s,2}(\bOm)\times W_0^{s,2}(\bOm)\times L^2(\Omega) $, there exists a control function $g\in L^2((0,T);W^{s,2}(\Omc))$ such that the weak solution $(\psi,\psi_t,\psi_{tt})$ of the dual system \eqref{ACP-Dual} satisfies
\begin{align}\label{33}
&-(u_2,\psi(\cdot,0))_{L^2(\Om)}+\langle u_1,\psi_{t}(\cdot,0)\rangle_{\frac 12,-\frac 12}-\langle u_0,\psi_{tt}(\cdot,0)\rangle_{\frac 12,-\frac 12}\notag\\
&-\alpha (u_1,\psi(\cdot,0))_{L^2(\Om)}+\alpha \langle u_0,\psi_{t}(\cdot,0)\rangle_{\frac 12,-\frac 12}-b\langle u_0,(-\Delta)^s\psi(\cdot,0)\rangle_{\frac 12,-\frac 12}\notag\\
=&\int_0^{T}\int_{\Omc}\Big(c^2 g(x,t)+b g_t(x,t)\Big)\mathcal{N}_{s}\psi(x,t)dxdt,
\end{align}
for each $(\psi_0,\psi_1,\psi_2)\in L^2(\Omega)\times W^{-s,2}(\bOm)\times W^{-s,2}(\bOm)$.

\item The system \eqref{SD-WE} is exact controllable at time $T>0$, if and only if there exists a control function $g\in L^2((0,T);W^{s,2}(\Omc))$ such that the solution $(\psi,\psi_t,\psi_{tt})$ of \eqref{ACP-Dual} satisfies
\begin{align}\label{33-2}
&(u_{tt}(\cdot,T),\psi_0)_{L^2(\Om)}-\langle u_t(\cdot,T),\psi_1\rangle_{\frac 12,-\frac 12}+\langle u(\cdot,T),\psi_2\rangle_{\frac 12,-\frac 12}\notag\\
&+\alpha\Big( (u_t(\cdot,T),\psi_0)_{L^2(\Om)}- \langle u(\cdot,T),\psi_1\rangle_{\frac 12,-\frac 12}\Big)+b\langle u(\cdot,T),(-\Delta)^s\psi_0\rangle_{\frac 12,-\frac 12}\notag\\
=&\int_0^{T}\int_{\Omc}\Big(c^2 g(x,t)+b g_t(x,t)\Big)\mathcal{N}_{s}\psi(x,t)dxdt,
\end{align}
for each $(\psi_0,\psi_1,\psi_2)\in L^2(\Omega)\times W^{-s,2}(\bOm)\times W^{-s,2}(\bOm)$.
\end{enumerate}
\end{lemma}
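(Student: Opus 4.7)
The plan is to extract both criteria directly from the duality identity \eqref{32}, whose derivation is sketched just above the statement of the lemma. First I would justify \eqref{32} rigorously at the required level of regularity. For smooth data — say $(u_0,u_1,u_2)\in D((-\Delta)_D^s)^3$, $(\psi_0,\psi_1,\psi_2)\in D((-\Delta)_D^s)^3$ and $g\in\mathcal D((\Omc)\times(0,T))$ — the identity follows by multiplying the first equation in \eqref{SD-WE} by $\psi$, integrating by parts three times in $t$ (producing the bracket $[u_{tt}\psi-u_t\psi_t+u\psi_{tt}+\alpha(u_t\psi-u\psi_t)]\big|_0^T$), and using the nonlocal integration-by-parts formula \eqref{Int-Part} for both $(-\Delta)^s u$ and $(-\Delta)^s u_t$. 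Because $\psi=0$ in $\Omc$ while $u=g$ there, the spatial boundary contributions collapse to $\int_0^T\int_{\Omc}(c^2g+bg_t)\mathcal N_s\psi\,dxdt$, and the interior terms cancel against the adjoint equation \eqref{ACP-Dual}, leaving $bu(-\Delta)^s\psi\big|_0^T$ as the sole temporal boundary residual coming from the $b(-\Delta)^s u_t$ term.

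Second, I would extend \eqref{32} to the regularity classes appearing in the lemma by a density argument. Every $(\psi_0,\psi_1,\psi_2)\in L^2(\Omega)\times W^{-s,2}(\bOm)\times W^{-s,2}(\bOm)$ is a limit of triples from $W_0^{s,2}(\bOm)\times W_0^{s,2}(\bOm)\times L^2(\Omega)$ in the weaker topology, and the corresponding transposition solutions $\psi$ retain just enough regularity (via the series representation \eqref{eq-25}) so that each pairing in \eqref{33} and \eqref{33-2} makes sense as a duality $W_0^{s,2}$–$W^{-s,2}$ or an $L^2$ inner product. The a priori estimates \eqref{Dual-EST-1}, \eqref{Dual-EST-1-2}, the continuity of $\mathcal N_s\colon W^{s,2}(\RR^N)\to L^2(\Omc)$, and the estimate \eqref{ESt-Ind} together guarantee continuity of every term in \eqref{32} under these approximations. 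A parallel density argument in $g$ extends the identity to arbitrary controls in $L^2((0,T);W^{s,2}(\Omc))$.

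Once \eqref{32} is available in this generality, both items follow. For \textbf{(a)}, null controllability is by definition the existence of a control $g$ with $u(\cdot,T)=u_t(\cdot,T)=u_{tt}(\cdot,T)=0$, which makes the $t=T$ bracket in \eqref{32} vanish and leaves exactly \eqref{33}. Conversely, if \eqref{33} holds for every dual datum, then subtracting \eqref{33} from \eqref{32} forces the $t=T$ bracket to vanish for all $(\psi_0,\psi_1,\psi_2)\in L^2\times W^{-s,2}\times W^{-s,2}$; choosing two of the three dual data to be zero and letting the remaining one range over its dense space, each of $u(\cdot,T)$, $u_t(\cdot,T)$, $u_{tt}(\cdot,T)$ is forced to annihilate every element of the corresponding test space, hence vanishes. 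For \textbf{(b)}, by linearity of \eqref{SD-WE} we subtract off the free evolution to reduce to $u_0=u_1=u_2=0$; the $t=0$ bracket in \eqref{32} then vanishes identically and what remains is precisely \eqref{33-2}, while the target $(u(\cdot,T),u_t(\cdot,T),u_{tt}(\cdot,T))$ is now arbitrary in $L^2\times W^{-s,2}\times W^{-s,2}$.

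The main obstacle I expect is the density step. The subtle point is that for dual data only in $L^2\times W^{-s,2}\times W^{-s,2}$, the transposition solution $\psi$ is strictly weaker than what Theorem \ref{theo-48} provides, so one must verify that $\psi(\cdot,0)$, $\psi_t(\cdot,0)$, $\psi_{tt}(\cdot,0)$, $(-\Delta)^s\psi(\cdot,0)$ and $\mathcal N_s\psi$ live in the right dual spaces and that each pairing in \eqref{33}, \eqref{33-2} depends continuously on the dual datum in the given topology. This continuity is the input that makes the ``for all dual data'' quantifier pass robustly between \eqref{32} and the identities \eqref{33}, \eqref{33-2}.
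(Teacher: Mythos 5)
Your argument is essentially the paper's own proof: the paper disposes of this lemma in a single line, invoking the duality identity \eqref{32} (derived exactly as you describe, by multiplying the state equation by $\psi$, integrating by parts in time, and using the nonlocal integration-by-parts formula \eqref{Int-Part}, the exterior terms surviving because $u=g$ in $\Omc$ while $\psi=0$ there) together with a density argument, so your write-up is simply a fleshed-out version of the same route, including the honest flagging of the weak-data density step that the paper also leaves implicit. The one point to tighten is the converse in part (a): when you force the $t=T$ bracket to vanish for all dual data, test sequentially — first $\psi_0=\psi_1=0$ with $\psi_2$ arbitrary to get $u(\cdot,T)=0$, then vary $\psi_1$ to get $u_t(\cdot,T)=0$, then vary $\psi_0$ — since letting $\psi_0$ range alone does not isolate $u_{tt}(\cdot,T)$ but yields the combination $(u_{tt}(\cdot,T),\psi_0)+\alpha(u_t(\cdot,T),\psi_0)+b\langle u(\cdot,T),(-\Delta)^s\psi_0\rangle_{\frac 12,-\frac 12}$.
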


Now, we are able to give the proof of the first main result of this article.

\begin{proof}[{\bf Proof of Theorem \ref{lact-nul-cont}}]
Using Definition \ref{esp-con}, we prove that no non-trivial finite linear combination of eigenvectors can be driven to zero in finite time. 

Write the initial data in Fourier series
\begin{align}\label{37}
u_0=\sum_{n=1}^{\infty}u_{0,n}\varphi_{n}, \quad u_1=\sum_{n=1}^{\infty}u_{1,n}\varphi_{n}, \quad u_2=\sum_{n=1}^{\infty}u_{2,n}\varphi_{n},
\end{align}
and suppose that there exists $M\in\NN$ such that  
\begin{align}\label{37.1}
u_{0,n}=u_{1,n}=u_{2,n}=0, \quad \forall\;  n\geq M.
\end{align}

Assume that the system \eqref{SD-WE} is spectrally controllable. Then, there exists a control function $g$ such that the solution $(u,u_t,u_{tt})$ of \eqref{SD-WE} with $u_0,u_1,u_2$ given by \eqref{37}--\eqref{37.1} satisfy $u(\cdot,T)=u_{t}(\cdot,T)=u_{tt}(\cdot,T)=0$ in $\Omega$. From Lemma \ref{L1} we have
\begin{align}\label{38}
&-(u_2,\psi(\cdot,0))_{L^2(\Om)}+\langle u_1,\psi_{t}(\cdot,0)\rangle_{\frac 12,-\frac 12}-\langle u_0,\psi_{tt}(\cdot,0)\rangle_{\frac 12,-\frac 12}\notag\\
&-\alpha (u_1,\psi(\cdot,0))_{L^2(\Om)}+\alpha \langle u_0,\psi_{t}(\cdot,0)\rangle_{\frac 12,-\frac 12}-b\langle u_0,(-\Delta)^s\psi(\cdot,0)\rangle_{\frac 12,-\frac 12}\notag\\
=&\int_0^{T}\int_{\Omc}\Big(c^2 g(x,t)+b g_t(x,t)\Big)\mathcal{N}_{s}\psi(x,t)dxdt,
\end{align}
for any solution $(\psi,\psi_t,\psi_{tt})$ of the dual system \eqref{ACP-Dual}.

We consider the following trajectories: 
\begin{align}\label{38.1}
\psi(x,t)=e^{\lambda_{n,j}(T-t)}\varphi_{n}(x), \quad j=1,2,3 .
\end{align}
Replacing \eqref{38.1} in \eqref{38} we obtain, for any $n\in[1,M-1]$, the following system:
\begin{multline}\label{39}
-u_{2,n}e^{\lambda_{n,j}T}+u_{1,n}\lambda_{n,j}e^{\lambda_{n,j}T}-u_{0,n}\lambda_{n,j}^2 e^{\lambda_{n,j}T}-\alpha e^{\lambda_{n,j}T}\Big(u_{1,n}-u_{0,n}\lambda_{n,i}\Big)-b  u_{0,n} \lambda_{n}e^{\lambda_{n,j }T}\\=\int_0^{T}\int_{\Omc}(c^2 g(x,t)+b g_t(x,t))e^{\lambda_{n,j}(T- t)}\mathcal{N}_{s}\varphi_{n}(x)dxdt.
\end{multline}
Multiplying \eqref{39} by $e^{-\lambda_{n,j}T}$, for each $j=1,2,3$, we obtain that the moment problem is to find some $g$ that satisfies
\begin{multline}\label{40}
-u_{2,n}+u_{1,n}\lambda_{n,j}-u_{0,n}\lambda_{n,j}^2 -\alpha \Big(u_{1,n}-u_{0,n}\lambda_{n,j}\Big)-b  u_{0,n} \lambda_{n}\\=\int_0^{T}\int_{\Omc}(c^2 g(x,t)+b g_t(x,t))e^{-\lambda_{n,j}t}\mathcal{N}_{s}\varphi_{n}(x)dxdt.
\end{multline}

Next, following the works \cite{martin2013null,WZ}, we define the complex function
\begin{align}\label{43}
F(z)=\int_0^{T}\left(\int_{\Omc}(c^2 g(x,t)+b g_t(x,t))\mathcal{N}_{s}\varphi_{n}(x)dx\right)e^{izt}dt.
\end{align}
According to the Paley--Wiener theorem, $F$ is an entire function. Due to \eqref{37.1}, from \eqref{40} we obtain that $F$ satisfies $F(i\lambda_{n,j})=0$, for all $n\ge M$. Besides, we know that $\lambda_{n,1}\to -\frac{c^2}{b}$ as $n\to\infty$ (see Lemma \ref{eigen-values-functions}). Then, $F$ is zero in a set with finite accumulation point. This implies that $F\equiv 0$.  It follows from \eqref{40} and \eqref{43} that

\begin{align*}
\underbrace{\left(\begin{array}{ccc}
\D\alpha\lambda_{n,1}-\lambda_{n,1}^2-b\lambda_n&\D \lambda_{n,1}-\alpha &\D -1\vspace*{0.2cm}\\\vspace*{0.2cm}
\D\alpha\lambda_{n,2}-\lambda_{n,2}^2-b\lambda_n& \D\lambda_{n,2}-\alpha & -1\\
\D\alpha\lambda_{n,3}-\lambda_{n,3}^2-b\lambda_n& \D\lambda_{n,3}-\alpha &\D-1
\end{array}\right)}_{B} \left(\begin{array}{c}
u_{0,n}\vspace*{0.2cm}\\\vspace*{0.2cm}
u_{1,n}\\\vspace*{0.2cm}
u_{2,n}
\end{array}\right)=\left(\begin{array}{c}
0\vspace*{0.2cm}\\\vspace*{0.2cm}
0\\
0
\end{array}\right).
\end{align*}
Calculating we get that
\begin{align*}
\mbox{det}(B)=(\lambda_{n,1}-\lambda_{n,2})(\lambda_{n,1}-\lambda_{n,3})(\lambda_{n,2}-\lambda_{n,3})\ne 0.
\end{align*}
Hence, the matrix $B$ is invertible and we can then conclude that
$u_{0,n}=u_{1,n}=u_{2,n}=0$, for $n<M$. Thus the trivial state is the only one which can be steered to zero. We have shown that the system is not spectrally controllable. It is clear from the proof that this implies that the system is not exact or null controllable.
The proof is finished.
\end{proof}

\subsection{The unique continuation property}

\begin{proof}[{\bf Proof of Theorem \ref{pro-uni-con}}]
Let $\mathcal O\subset\Omc$ be an arbitrary non-empty open set. Suppose that $\mathcal{N}_{s}\psi=0$ in $\mathcal{O}\times(0,T)$. Then, for all $(x,t)\in\mathcal{O}\times(0,T)$ we have that
\begin{align}\label{44}
\mathcal{N}_{s}\psi(x,t)=\sum_{n=1}^{\infty}\Big(D_{n}(T-t)\psi_{0,n}+E_{n}(T-t)\psi_{1,n}+F_{n}(T-t)\psi_{2,n}\Big)\mathcal{N}_{s}\varphi_{n}(x)=0.
\end{align}

Since $\mathcal{N}_{s}\psi$ can be analytically extended to $\Sigma_{T}$, it follows that  for all $(x,t)\in\mathcal{O}\times(-\infty,T)$,
\begin{align}\label{45}
\mathcal{N}_{s}\psi(x,t)=\sum_{n=1}^{\infty}\Big(D_{n}(T-t)\psi_{0,n}+E_{n}(T-t)\psi_{1,n}+F_{n}(T-t)\psi_{2,n}\Big)\mathcal{N}_{s}\varphi_{n}(x)=0.
\end{align}

Let $\{\lambda_{k}\}_{k\in\NN}$ be the set of all eigenvalues of the operator $(-\Delta)_D^{s}$ and let $\{\varphi_{k_{j}}\}_{1\leq j\leq k_{j}}$ be an orthonormal basis for ker$(\lambda_{k}-(-\Delta)_D^{s})$. Then, \eqref{45} can be rewritten as 

\begin{align}\label{46}
\mathcal{N}_{s}\psi(x,t)=&\sum_{k=1}^{\infty}\left(\sum_{j=1}^{m_{k}}\psi_{0,k_{j}}\mathcal{N}_{s}\varphi_{k_{j}}(x)\right)D_{k}(T-t)+\sum_{k=1}^{\infty}\left(\sum_{j=1}^{m_{k}}\psi_{1,k_{j}}\mathcal{N}_{s}\varphi_{k_{j}}(x)\right)E_{k}(T-t)\notag\\
&+\sum_{k=1}^{\infty}\left(\sum_{j=1}^{m_{k}}\psi_{2,k_{j}}\mathcal{N}_{s}\varphi_{k_{j}}(x)\right)F_{k}(T-t)=0,\quad \forall\; (x,t)\in\mathcal{O}\times(-\infty,T).
\end{align}

Let $z\in\CC$ with Re$(z)=\eta>0$ and let $m\in\NN$. Since $\varphi_{k_{j}}$, $1\leq k\leq m$, are orthonormal, then using the fact that the operator $\mathcal{N}_{s}:D((-\Delta)_D^{s})\subset W^{s,2}(\RR^{N})\to L^2(\RR^{N}\setminus\Omega)$ is bounded and the continuous dependence on the data of $\mathcal{N}_{s}$ (see \eqref{27.4}), and letting 

\begin{align*}
\psi_{m}(\cdot,t):=&\sum_{k=1}^{m}\left(\sum_{j=1}^{m_{k}}\psi_{0,k_{j}}\mathcal{N}_{s}\varphi_{k_{j}}(x)\right)e^{z(t-T)}D_{k}(T-t)+\sum_{k=1}^{m}\left(\sum_{j=1}^{m_{k}}\psi_{1,k_{j}}\mathcal{N}_{s}\varphi_{k_{j}}(x)\right)e^{z(t-T)}E_{k}(T-t) \notag\\
&+\sum_{k=1}^{m}\left(\sum_{j=1}^{m_{k}}\psi_{2,k_{j}}\mathcal{N}_{s}\varphi_{k_{j}}(x)\right)e^{z(t-T)}F_{k}(T-t),
\end{align*}
we obtain that there is a constant $C>0$ such that for every $t\in [0,T]$,

\begin{align}\label{48}
\|\psi_{m}(\cdot,t)\|_{L^2(\RR^{N}\setminus\Omega)}\leq C e^{\eta(t-T)}\left(\|\psi_0\|_{W^{s,2}(\bOm)}+\|\psi_1\|_{W^{s,2}(\bOm)}+\|\psi_2\|_{L^2(\Omega)}\right).
\end{align}
The right hand side of \eqref{48} is integrable over $t\in(-\infty,T)$ and
\begin{align*}
\int_{-\infty}^{T}e^{\eta(t-T)}(\|\psi_0\|_{W^{s,2}(\bOm)}+\|\psi_1\|_{W^{s,2}(\bOm)}+\|\psi_2\|_{L^2(\Omega)})dt=\frac{1}{\eta}\left(\|\psi_0\|_{W^{s,2}(\bOm)}+\|\psi_1\|_{W^{s,2}(\bOm)}+\|\psi_2\|_{L^2(\Omega)}\right).
\end{align*}

By the Lebesgue dominated convergence theorem, we can deduce that
\begin{align*}
\int_{-\infty}^{T}e^{z(t-T)}&\left[\sum_{k=1}^{\infty}\left(\sum_{j=1}^{m_{k}}\psi_{0,k_{j}}\mathcal{N}_{s}\varphi_{k_{j}}(x)\right)D_{k}(T-t)+\sum_{k=1}^{\infty}\left(\sum_{j=1}^{m_{k}}\psi_{1,k_{j}}\mathcal{N}_{s}\varphi_{k_{j}}(x)\right)E_{k}(T-t)\right.\\
&+\left.\sum_{k=1}^{\infty}\left(\sum_{j=1}^{m_{k}}\psi_{2,k_{j}}\mathcal{N}_{s}\varphi_{k_{j}}(x)\right)F_{k}(T-t)\right]dt\\
=&\sum_{k=1}^{\infty}\sum_{j=1}^{m_{k}}\Big(G_{k}(z)\psi_{0,k_{j}}+H_{k}(z)\psi_{1,k_{j}}+I_{k}(z)\psi_{2,k_{j}}\Big)\mathcal{N}_{s}\varphi_{k_{j}}(x), \quad x\in\RR^{N}\setminus\Omega, \ \text{Re}(z)>0,
\end{align*}
where
\begin{align}\label{Gn}
\left\{\begin{array}{c}
G_{k}(z)=\D\frac{\lambda_{k,2}\lambda_{k,3}}{\xi_{k,1}(z-\lambda_{k,1})}-\frac{\lambda_{k,1}\lambda_{k,3}}{\xi_{k,2}(z-\lambda_{k,2})}+\frac{\lambda_{k,1}\lambda_{k,2}}{\xi_{k,3}(z-\lambda_{k,3})},\\
H_{k}(z)=\D-\frac{\lambda_{k,2}+\lambda_{k,3}}{\xi_{k,1}(z-\lambda_{k,1})}+\frac{\lambda_{k,1}+\lambda_{k,3}}{\xi_{k,2}(z-\lambda_{k,2})}-\frac{\lambda_{k,1}+\lambda_{k,2}}{\xi_{k,3}(z-\lambda_{k,3})},\\
I_{k}(z)=\D\frac{1}{\xi_{k,1}(z-\lambda_{k,1})}-\frac{1}{\xi_{k,2}(z-\lambda_{k,2})}+\frac{1}{\xi_{k,3}(z-\lambda_{k,3})}.
\end{array}\right.
\end{align}
We recall that $\xi_{k,1}$, $\xi_{k,2}$ and $\xi_{k,3}$ are given in \eqref{xi}.

From \eqref{46} we get that
\begin{align}\label{50}
\sum_{k=1}^{\infty}\sum_{j=1}^{m_{k}}\Big(G_{k}(z)\psi_{0,k_{j}}+H_{k}(z)\psi_{1,k_{j}}+I_{k}(z)\psi_{2,k_{j}}\Big)\mathcal{N}_{s}\varphi_{k_{j}}(x)=0,\quad x\in\mathcal{O}, \ \text{Re}(z)>0.
\end{align}
Using the analytic continuation in $z$, we obtain that \eqref{50} holds for every $z\in\CC\setminus\{\lambda_{k,1},\lambda_{k,2},\lambda_{k,3}\}_{k\in\NN}$.  

We take a small circle about $\lambda_{k,h}$, for some $h\in\{1,2,3\}$,  but not including $\{\lambda_{l,j}\}_{l\neq k,\; j\neq h}$, with $j\in\{1,2,3\}$. Then, integrating over that circle we get the following system:
\begin{align}
\sum_{j=1}^{m_{k}}\left[\frac{\lambda_{k_j,2}\lambda_{k_j,3}}{\xi_{k_j,1}}\psi_{0,k_{j}}-\frac{\lambda_{k_j,2}+\lambda_{k_j,3}}{\xi_{k_j,1}}\psi_{1,k_j}+\frac{1}{\xi_{k_j,1}}\psi_{2,k_j}\right]\mathcal{N}_{s}\varphi_{k_{j}}(x)=&0,\quad x\in\mathcal{O},\label{51}\\
\sum_{j=1}^{m_{k}}\left[\frac{-\lambda_{k_j,1}\lambda_{k_j,3}}{\xi_{k_j,2}}\psi_{0,k_{j}}+\frac{\lambda_{k_j,1}+\lambda_{k_j,3}}{\xi_{k_j,2}}\psi_{1,k_j}-\frac{1}{\xi_{k_j,2}}\psi_{2,k_j}\right]\mathcal{N}_{s}\varphi_{k_{j}}(x)=&0,\quad x\in\mathcal{O},\label{51-a}\\
\sum_{j=1}^{m_{k}}\left[\frac{\lambda_{k_j,1}\lambda_{k_j,2}}{\xi_{k_j,3}}\psi_{0,k_{j}}-\frac{\lambda_{k_j,1}+\lambda_{k_j,2}}{\xi_{k_j,3}}\psi_{1,k_j}+\frac{1}{\xi_{k_j,3}}\psi_{2,k_j}\right]\mathcal{N}_{s}\varphi_{k_{j}}(x)=&0,\quad x\in\mathcal{O}.\label{51-b}
\end{align}

Let
\begin{align*}
\psi_{k}^1&:=\sum_{j=1}^{m_{k}}\left[\frac{\lambda_{k_j,2}\lambda_{k_j,3}}{\xi_{k_j,1}}\psi_{0,k_{j}}-\frac{\lambda_{k_j,2}+\lambda_{k_j,3}}{\xi_{k_j,1}}\psi_{1,k_j}+\frac{1}{\xi_{k_j,1}}\psi_{2,k_j}\right]\varphi_{k_{j}},\\
\psi_{k}^2&=\sum_{j=1}^{m_{k}}\left[\frac{-\lambda_{k_j,1}\lambda_{k_j,3}}{\xi_{k_j,2}}\psi_{0,k_{j}}+\frac{\lambda_{k_j,1}+\lambda_{k_j,3}}{\xi_{k_j,1}}\psi_{1,k_j}-\frac{1}{\xi_{k_j,2}}\psi_{2,k_j}\right]\varphi_{k_{j}},\\
\psi_k^3&=\sum_{j=1}^{m_{k}}\left[\frac{\lambda_{k_j,1}\lambda_{k_j,2}}{\xi_{k_j,3}}\psi_{0,k_{j}}-\frac{\lambda_{k_j,1}+\lambda_{k_j,2}}{\xi_{k_j,3}}\psi_{1,k_j}+\frac{1}{\xi_{k_j,3}}\psi_{2,k_j}\right]\varphi_{k_{j}}.
\end{align*}
It follows from \eqref{51}, \eqref{51-a} and \eqref{51-b} that $\mathcal{N}_{s}\psi_k^1=\mathcal{N}_{s}\psi_k^2=\mathcal{N}_{s}\psi_k^3=0$ in $\mathcal{O}$. We have shown that 
\begin{align*}
(-\Delta)^{s}\psi_{k}^l=\lambda_{k}\psi_{k}^l \quad\text{  in }\Omega\quad\text{ and }\quad\mathcal{N}_{s}\psi_{k}^l=0\quad\text{ in }\mathcal{O}, \ l=1,2,3.
\end{align*}
From Lemma \ref{lem-UCD}, we deduce that $\psi_k^l=0$, for every $k\in\NN$ and $l=1,2,3$. Since the system $\{\varphi_{k_j}\}_{1\leq j\leq m_k}$ is linearly independent in $L^2(\Omega)$, we get that 
\begin{align*}
\underbrace{\left(\begin{array}{ccc}
\D \frac{\lambda_{k,2}\lambda_{k,3}}{\xi_{k,1}}& -\D\frac{\lambda_{k,2}+\lambda_{k,3}}{\xi_{k,1}} &\D \frac{1}{\xi_{k,1}}\vspace*{0.2cm}\\\vspace*{0.2cm}
\D\frac{-\lambda_{k,1}\lambda_{k,3}}{\xi_{k,2}} & \D\frac{\lambda_{k,1}+\lambda_{k,3}}{\xi_{k,2}} & -\D\frac{1}{\xi_{k,2}}\\
\D\frac{\lambda_{k,1}\lambda_{k,2}}{\xi_{k,3}}& -\D\frac{\lambda_{k,1}+\lambda_{k,2}}{\xi_{k,3}} &\D\frac{1}{\xi_{k,3}}
\end{array}\right)}_{A} \left(\begin{array}{c}
\psi_{0,k}\vspace*{0.8cm}\\\vspace*{0.8cm}
\psi_{1,k}\\
\psi_{2,k}
\end{array}\right)=\left(\begin{array}{c}
0\vspace*{0.8cm}\\\vspace*{0.8cm}
0\\
0
\end{array}\right).
\end{align*}
A simple calculation shows that the determinant of the matrix $A$ is given by
\begin{align*}
\det(A)=\frac{i}{2\operatorname{Im}(\lambda_{k,2})\left[\operatorname{Re}(\lambda_{k,2})-\lambda_{k,1})^2+(\operatorname{Im}(\lambda_{k,2}))^2\right]}\ne 0.
\end{align*}
Since the matrix $A$ is invertible, we can deduce that 
\begin{align*}
\psi_{0,k}=\psi_{1,k}=\psi_{2,k}=0, \  k\in\NN.
\end{align*}
Since the solution $(\psi,\psi_t,\psi_{tt})$ of the adjoint system is unique, we can conclude that $\psi=0$ in $\Omega\times (0,T)$. The proof is finished.
\end{proof}

\subsection{The approximate controllability}

We obtain the result as a direct consequence of the unique continuation property for the adjoint system (Theorem \ref{pro-uni-con}).

\begin{proof}[{\bf Proof of Theorem \ref{main-Theo}}]
Let $g\in \mathcal D(\mathcal O\times (0,T))$,  $(u,u_t,u_{tt})$ the unique weak solution of \eqref{SD-WE} with $u_0=u_1=u_2=0$ and let $(\psi,\psi_t,\psi_{tt})$ be the unique weak solution of \eqref{ACP-Dual} with $(\psi_0,\psi_1,\psi_2)\in W_0^{s,2}(\bOm)\times W_0^{s,2}(\bOm)\times L^2(\Omega)$. Firstly, it follows from Theorems \ref{theo-44}  that $u\in C^\infty([0,T];W^{s,2}(\RR^N))$. Thus $u(\cdot,T)\in L^2(\Omega)$, $u_t(\cdot,T)\in W^{-s,2}(\bOm)$ and $u_{tt}(\cdot,T)\in W^{-s,2}(\bOm)$.
Secondly, it follows from Theorem \ref{theo-48} that $\psi\in L^1((0,T);L^2(\Omega))$.   
Therefore, using the identity  \eqref{32} we can deduce  that

\begin{align}\label{58}
&\langle u_{tt}(\cdot,T),\psi_0\rangle_{\frac 12,-\frac 12}-\langle u_t(\cdot,T),\psi_1\rangle_{\frac 12,-\frac 12}+ (u(\cdot,T),\psi_2)_{L^2(\Omega)}\notag\\
&+\alpha\Big( (u_t(\cdot,T),\psi_0)_{\frac 12,-\frac 12}- \langle u(\cdot,T),\psi_1\rangle_{\frac 12,-\frac 12}\Big)+b\langle u(\cdot,T),(-\Delta)^s\psi_0\rangle_{\frac 12,-\frac 12}\notag\\
=&\int_0^{T}\int_{\Omc}\Big(c^2 g(x,t)+b g_t(x,t)\Big)\mathcal{N}_{s}\psi(x,t)dxdt,
\end{align}

If $(\psi_0,\psi_1,\psi_2)\in D((-\Delta)_D^s)\times ((-\Delta)_D^s)\times L^2(\Om)\hookrightarrow W_0^{s,2}(\bOm)\times W_0^{s,2}(\bOm)\times  L^2(\Omega)$, then \eqref{58} becomes
\begin{align}\label{58-2}
\langle u_{tt}(\cdot,T),\psi_0\rangle_{\frac 12,-\frac 12}&+\langle u_t(\cdot,T),\alpha\psi_0-\psi_1\rangle_{\frac 12,-\frac 12}+ \Big(u(\cdot,T),-\alpha\psi_1+\psi_2+b(-\Delta)^s\psi_0\Big)_{L^2(\Omega)}\notag\\
&=\int_0^{T}\int_{\Omc}\Big(c^2 g(x,t)+b g_t(x,t)\Big)\mathcal{N}_{s}\psi(x,t)dxdt.
\end{align}
Since $D((-\Delta)_D^s)\times D((-\Delta)_D^s)\times L^2(\Om)$ is dense in $W_0^{s,2}(\bOm)\times  W_0^{s,2}(\bOm)\times  L^2(\Omega)$, to prove that the set $\Big\{(u(\cdot,T),u_t(\cdot,T),u_{tt}(\cdot,T)):\; g\in \mathcal D(\mathcal O\times (0,T))\Big\}$ is dense in $L^2(\Omega)\times W^{-s,2}(\bOm)\times W^{-s,2}(\bOm)$, it suffices to show that if $(\psi_0,\psi_1,\psi_2)\in D((-\Delta)_D^s)\times D((-\Delta)_D^s)\times  L^2(\Om)$ is such that
\begin{align}\label{eq42}
\langle u_{tt}(\cdot,T),\psi_0\rangle_{\frac 12,-\frac 12}+\langle u_t(\cdot,T),\alpha\psi_0-\psi_1\rangle_{\frac 12,-\frac 12}+ \Big(u(\cdot,T),-\alpha\psi_1+\psi_2+b(-\Delta)^s\psi_0\Big)_{L^2(\Omega)}=0,
\end{align}
for any $g\in \mathcal D(\mathcal O\times (0,T))$, then $\psi_0=\psi_1=\psi_2=0$. 

Indeed, let $(\psi_0,\psi_1,\psi_2)\in D((-\Delta)_D^s)\times D((-\Delta)_D^s)\times L^2(\Om)$ satisfy \eqref{eq42}. It follows from \eqref{58-2} and \eqref{eq42} that
\begin{align*}
\int_0^{T}\int_{\Omc}\Big(c^2 g(x,t)+b g_t(x,t)\Big)\mathcal{N}_{s}\psi(x,t)dxdt=0,
\end{align*}
for any $g\in \mathcal D(\mathcal O\times (0,T))$. Recall that $b>0$. By the fundamental lemma of the calculus of variations, we can deduce that
\begin{align*}
\mathcal N_s\psi=0\;\;\mbox{ in }\; \mathcal O\times (0,T).
\end{align*}
It follows from Theorem \ref{pro-uni-con} that $\psi=0$ in $\Omega\times(0,T)$. Since the solution $(\psi,\psi_t,\psi_{tt})$ of \eqref{ACP-Dual} is unique, we can conclude that $\psi_0=\psi_1=\psi_2=0$ in $\Omega$. The proof is finished.
\end{proof}



\bibliographystyle{plain}
\bibliography{biblio}

\end{document}